\DeclareMathSymbol{\widehatsym}{\mathord}{largesymbols}{"62}
\def\*#1{\mathbf{#1}}
\renewcommand{\bar}{\overline}
\renewcommand{\tilde}{\widetilde}
\renewcommand{\phi}{\varphi}
\newcommand{\figcaption}{\def\@captype{figure}\caption}
\newcommand{\tabcaption}{\def\@captype{table}\caption}
\font\tenmsbm=msbm10\textfont
\font\sevenmsbm=msbm7
\def\bb#1{{\fam\msbmfam #1}}
\def\EE{\mathbb E}
\def\RR{\mathbb R}
\def\<{\left<}\def\>{\right>}
\def\({\left(}\def\){\right)}
\def\[{\left[}\def\]{\right]}
\renewcommand\d[1]{\ensuremath{%
  \;\mathrm{d}#1\@ifnextchar\d{\!}{}}}
\theoremstyle{plain}% default
\theoremstyle{definition}
\newtheorem{example}{Example}[section]
   \xpatchcmd{\@thm}{\fontseries\mddefault\upshape}{}{}{} % same font as thm-header
\newcommand{\beq}{\begin{equation}}
\newcommand{\eeq}{\end{equation}}
\definecolor{c}{rgb}{0.9,0.3,0.1}
\definecolor{b}{rgb}{0.1,0.3,0.9}
\newtheorem{remark}{Remark}[section]
\newtheorem{lemma}{Lemma}[section]
\newtheorem{theorem}{Theorem}[section]
\newtheorem{definition}{Definition}[section]
\newtheorem{hypothesis}{Hypothesis}[section]
\def\<{\left<}\def\>{\right>}\def\({\left(}\def\){\right)}
\font\tenmsbm=msbm10\textfont
\font\sevenmsbm=msbm7
\def\bb#1{{\fam\msbmfam #1}}
\def\EE{\bb E}
\def\RR{\bb R}
\numberwithin{equation}{section}
\begin{document}

\title{\large \bf Stochastic maximum principle for generalized mean-field delay control problem \thanks{Research supported partially by FDCT 025/2016/A1.}}
\author{Hancheng Guo\footnotemark[2], \ \ Jie Xiong\footnotemark[2] \ \ and \ \ Jiayu Zheng\thanks{Department of Mathematics, Faculty of Science and Technology, University of Macau, Macau, China. E-mail: guohancheng1989@gmail.com,  jiexiong@umac.mo and jy\_zheng@outlook.com.}
}
\date{}
\maketitle
 \bigskip
 
\noindent \textbf{Abstract.}
 In this paper, we first give the existence and uniqueness theorems for generalized mean-filed delay stochastic differential equations (GMFDSDEs) and mean-field anticipated backward stochastic differential equations (MFABSDEs). Then we study the stochastic maximum principle for generalized mean-filed delay control problem. Since the state is distribution-depending, we define the adjoint equation as a MFABSDE, in which, all the derivatives of coefficients are in Fr\'echet sense. We deduce the stochastic maximum principle, and also obtain, under some additional assumptions, a sufficient condition for the optimality of the control.

\bigskip

\noindent \textbf{Keyword.}
Existence and uniqueness, Stochastic maximum principle, mean-filed control problem, McKean-Vlasov equation, Fr\'echet derivative.\\
\noindent \textbf{AMS subject classifications.}
93E20, 93E03, 60H10, 60H30

\section{{\protect  {Introduction}}}
In this paper we discuss new types of differential equatios which we call mean-field anticipated backward stochastic differential equations (MFABSDEs):
\begin{eqnarray}\label{eqc2}
\begin{cases}
-dY_t = \EE^{'}[f(t, Y'_t, Z'_t, Y'_{t+\delta(t)}, Z'_{t+\zeta(t)}, Y_t, Z_t, Y_{t+\delta(t)}, Z_{t+\zeta(t)})]dt - Z_t dB_t  , \\
\ \ \ \ \ \ \ \ \ \ \ \ \ \ \ \ \ \ \ \ \ \ \ \ \ \ \ \ \ \ \ \ \ \ \ \ \ \ \ \ \ \ \ \ \ \ \ \ \ \ \ \ \ \ \ \ \ \ \ \ \ \ \ \ \ \ \ \ \ \ \ \ \ \ \ \ \ \ \ \   t \in [0,T];
\cr 
Y_t = \xi_t,   Z_t = \eta_t , \ \ \ \ \ \ \ \ \ \ \ \ \ \ \ \ \  \ \ \ \ \ \ \ \ \ \ \ \ \ \ \ \ \ \ \ \ \ \ \ \ \ \ \ \ \ \ \ \ \ \ \      t \in  [T, T+K],
\end{cases}
\end{eqnarray}
where $B$ is a $d$-dimensional Brownian motion, $K$ is a constant. Precise assumptions on the coefficient $f$ and the definition of $\EE'$ are given in the following sections.

Actually, the above MFABSDE is inspired by the mean-field BSDEs 
\begin{align} \label{eqq3}
Y_t = \xi_T + \int_t^T \EE^{'} \[f(s, Y_s, Z_s, Y_s^{'}, Z_s^{'})\]ds - \int_t^T Z_s dB_s, \ \ \ \ \  t \in [0, T]
\end{align}
that is studied by \cite{BLP2009} and the anticipated BSDEs
\begin{eqnarray} \label{eqq4}
\begin{cases}
Y_t = \xi + \int_t^T f(s, Y_s, Z_s, Y_{s + \delta(s)}, Z_{s + \zeta(s)})ds - \int_t^T Z_s dB_s,   \ \ \ \  t \in [0,T];
\cr 
Y_t = \xi_t,    \ \ \ \ \ \ \ \ \ \ \ \ \ \ \ \ \ \ \ \ \ \ \ \ \ \ \ \ \ \ \ \ \ \ \ \ \ \ \ \ \ \  \ \ \ \ \ \ \ \ \ \ \ \ \ \ \ \   t \in  (T, T+K];
\cr
Z_t = \eta_t,    \ \ \ \ \ \ \ \ \ \ \ \ \ \ \ \ \ \ \ \ \ \ \ \ \ \ \ \ \ \ \ \ \ \ \ \ \ \ \ \ \ \  \ \ \ \ \ \ \ \ \ \ \ \ \ \ \ \   t \in  (T, T+K]. 
\end{cases}
\end{eqnarray}
which is investigated by \cite{PY}.

\indent We consider the stochastic maximum principle for a generalized mean-field delay control problem, whose state equation is defined as
\begin{eqnarray}\label{2eq11}
\left\{
\begin{array}{l}
dX^v_t=b(t,X^v_t,X^v_{t-\delta}, P_ {{X^v_t}},P_ {{X^v_{t-\delta}}},v_t,v_{t-\delta}) dt+\sigma  dB_t, \ \ \ t\in[0,T],\\
X^v_t=\xi_t, \ v_t=\eta_t, \ \ \ \ \ \ \ \ \ \ \ \ \ \ \ \ \ \ \ \ \ \ \ \ \ \ \ \ \ \ \ \ \ \ \ \ \ \ \ \ \ \ \ \ \ \ \ t\in[-\delta,0],
\end{array}
\right.
\end{eqnarray}
where $P_ {{X}}$ is the law of ${X}$, $\delta\in [0,T]$, $\sigma$ has the same structure as $b$, and the cost functional is defined as:
\begin{equation}\label{2eq12}
  \begin{aligned}
J(v)=\mathbb{E}\left\{ \int^T_0 h(t,X^v_t,P_ {{X^v_t}},v_t,v_{t-\delta})dt+\Phi(X^v_T,P_ {{X^v_T}})\right\}.
\end{aligned}
 \end{equation}
The agent wishes to minimize his cost functional $J(v)$. Namely, an admissible control $u\in \mathcal{U}$ is said to be optimal if
$$J(u)=\min_{v\in \mathcal{U}}J(v).$$ 
\indent About stochastic maximum principle (SMP), some pioneering works have been done by  Pontryagin et al. \cite{P1962}, they obtained the Pontryagin's maximum principle by using ``spike variation''.  Kushner \cite{K1965} \cite{K1972} studied the SMP in the framework when the diffusion coefficient does not depend on the control variable, and the cost functional consists of terminal cost only. Haussmann \cite{H1986} gave a version of SMP when the diffusion of the state does not depend on control item. Arkin and Saksonov \cite{AS1979}, Bensoussan  \cite{B1982} and Bismut \cite{B1978}, proved different versions of SMP under various setups.\\
\indent Pardoux and Peng \cite{PP1990} introduced non-linear backward stochastic differential equations (BSDE) in 1990. They showed that under appropriate assumptions, BSDE admits an unique adapted solution, and the associated comparison theorem holds. An SMP was obtained by Peng \cite{P1990} in the same year. In that paper, first and second order variational inequalities are introduced, when the control domain need not to be convex, and the diffusion coefficient contains the control variable. The authors of \cite{BLP2009} obtained mean-field BSDE in a natural way as the limit of some high dimensional system of forward and backward stochastic differential equations. Li \cite{L2012} studied SMP for mean-filed controls, when the domain of the control is assumed to be convex. Under some additional assumptions, both necessary and sufficient conditions for the optimality of a control were proved.\\
\indent Buckdahn et al. \cite{BDL2011} considered an SMP for SDEs of mean-field type control problem when the coefficients depend on the state of the solution process as well as on its expected value.
 Moreover, the cost functional is also of mean-field type. Their system is defined as follows:
 \begin{equation}
\left\{
\begin{array}{l}
dX_t=b(t,X_t,\mathbb{E}[{{X_t}}],v_t) dt
    +\sigma(t,X_t,\mathbb{E}[{{X_t}}],v_t) dB_t,\\
X_0=x.
\end{array}
\right.
\end{equation}
and the cost/payoff functional is defined by:
\begin{equation}
\begin{split}
  \begin{aligned}
J(v)=\mathbb{E}\left\{ \int^T_0 h(t,X_t,\mathbb{E}[{{X_t}}],v_t)dt+\Phi(X_T,\mathbb{E}[{{X_T}}])\right\},
\end{aligned}
\end{split}
 \end{equation}
  An SMP is derived, specifying the necessary conditions for the optimality. This maximum principle differs from the classical one in the sense that here the first order adjoint equation turns out to be a linear mean-field backward SDE, while the second order adjoint equation remains the same as in Peng's SMP. About stochastic delay control problem, Chen and Wu \cite{CW2010} obtain the maximum principle for the optimal control of this problem by virtue of the duality method and the anticipated backward stochastic differential equations. The Authors of \cite{DHQ2013} develop this theory into classical mean-field type, which means the coefficients of the state depend on the expectation.\\
 \indent Buckdahn et al. \cite{BLP2014} studied generalized mean-field stochastic differential equations and the associated partial differential equations (PDEs). ``Generalized" means the coefficients depend on both the state process and its law. They proved that under appropriate regularity conditions on the coefficients, the SDE has the unique classical solution. In this paper, we study the optimal control when the state equation is in the controlled generalized mean-filed form.\\

\section{Preliminaries}

\indent In this section, for the convenience of the reader, we state some results of Buckdahn et al. \cite{BLP2014} without proofs, which will be used in present work.

Let $(\Omega,{\cal{F}},P)$ be a probability space with filtration ${\cal{F}}_{t}$.
Suppose that ${B}_t$
 is a Brownian motion belongs to
$(\Omega,{\cal{F}},P)$, where ${\mathcal{F}}$ is the filtration generated by ${B}_t$, and augmented by all $P$-null sets. Let $\mathcal{P}_2(\mathbb{R}^n)$  be the collection of all square integrable probability measures over $(\mathbb{R}^n,\mathcal{B}(\mathbb{R}^n))$, endowed with the 2-Wasserstein metric $W_2$, which is defined as
$$W_2(P_{\mu },P_{\nu})=\inf\left\{ \left(\mathbb{E}[|\mu'-\nu'|^2]\right)^{\frac{1}{2}}\right \},$$
for all $\mu',\nu' \in L^2(\mathcal{F}_0;\mathbb{R}^d)$ with $P_{\mu'}=P_{\mu}, \ P_{\nu'}=P_{\nu}.$ Now let us introduce the following spaces: 
\begin{eqnarray*}
L^p(\Omega, \mathcal{F}_T, P;\mathbb{R}^n)=\left\{\xi:\mathbb{R}^n\text{-valued} \ \mathcal{F}_T\text{-measurable\  r.v.} ; \mathbb{E}\left[|\xi|^p\right]<+\infty\right\},
\end{eqnarray*}
\begin{eqnarray*}
L^0(\Omega, \mathcal{F}, P;\mathbb{R}^n)=\left\{\xi:\mathbb{R}^n\text{-valued} \ \mathcal{F}\text{-measurable\  random variables} \right\},
\end{eqnarray*}
\begin{eqnarray*}
H^2_{\mathcal{F}}(s, r;\mathbb{R}^n)=&\bigg\{&(\phi_t)_{s\leq t\leq r}: \mathbb{R}^n\text{-valued} \ \mathcal{F}_t\text{-adapted stochastic process} ;\\
 &&\mathbb{E}\left[\int^r_s |\phi_t|^2dt\right]<+\infty\bigg\},
\end{eqnarray*}
\begin{eqnarray*}
S^2_{\mathcal{F}}(s, r;\mathbb{R}^n)=&\bigg\{&(\phi_t)_{s\leq t\leq r}: \mathbb{R}^n\text{-valued} \ \mathcal{F}_t\text{-adapted stochastic process} ;\\
 &&\mathbb{E}\left[\sup_{s\leq t\leq r} |\phi_t|^2\right]<+\infty\bigg\}.
\end{eqnarray*}
 $\mathcal{U}=H^2_{\mathcal{F}}(0,T;U)$ denotes the set of admissible controls of the following form:
 \begin{eqnarray*}
v_t=\left\{
\begin{array}{l}
v_t\in H^2_{\mathcal{F}}(0, T;\mathbb{R}^n), \ t\in[0,T],\\
\gamma_t,\ \ \ \ \ \ \ \ \ \ \ \ \  \ \ \ \ \ \ \  \ t\in[-\delta,0],
\end{array}
\right.
\end{eqnarray*}
  where, $\gamma$ is square integrable on $[-\delta,0]$, $U$ is supposed to be a convex subset of $\mathbb{R}^k$. Given $b:[0,T]\times \mathbb{R}^n\times \mathbb{R}^n\times \ \mathcal{P}_2(\mathbb{R}^n)\times \ \mathcal{P}_2(\mathbb{R}^n) \times U\times U \longrightarrow \mathbb{R}^n,\ \sigma:[0,T]\times \mathbb{R}^n\times \mathbb{R}^n\times \ \mathcal{P}_2(\mathbb{R}^n)\times \ \mathcal{P}_2(\mathbb{R}^n) \times U\times U \longrightarrow \mathbb{R}^{n\times d}, \ h:[0,T]\times \mathbb{R}^n\times \ \mathcal{P}_2(\mathbb{R}^n) \times U \times U\longrightarrow \mathbb{R}, \  \Phi:\mathbb{R}^n\times \mathcal{P}_2(\mathbb{R}^n) \longrightarrow \mathbb{R}.$ 

 About the deriavative with respect to measure, the following definition is taken from Cardaliaguet \cite{C2013}.
\begin{definition}
A function $f: \mathcal{P}_2(\mathbb{R}^n)\longrightarrow \mathbb{R} $ is said to be differentiable in $\mu \in \mathcal{P}_2(\mathbb{R}^n)$ if, the function $\tilde{f}: L^2(\mathcal{F};\mathbb{R}^n)\longrightarrow \mathbb{R}$ given by $\tilde{f}(\mathfrak{v})=f(P_{\mathfrak{v}})$ is differentiable (in Fr\'{e}chet sense) at $\mathfrak{v}_0$, defined by $P_{\mathfrak{v}_0}=\mu$, i.e. there exists a linear continuous mapping $D\title{f}(\mathfrak{v}_0):L^2( \mathcal{F} ;\mathbb{R}^n)\longrightarrow \mathbb{R},$ such that
$$\tilde{f}(\mathfrak{v}_0+\eta)-\tilde{f}(\mathfrak{v}_0)=D\tilde{f}(\mathfrak{v}_0)(\eta)+o(|\eta|_{L^2}),$$
with $|\eta|_{L^2}\longrightarrow 0$ for $\eta \in L^2( \mathcal{F} ;\mathbb{R}^n). $
\end{definition}

\indent According to Riesz' Representation Theorem, there exists a unique random variable $\theta_0\in L^2( \mathcal{F} ;\mathbb{R}^n)$ such that $D\tilde{f}(\mathfrak{v}_0)(\eta)=(\theta_0,\eta)_{L^2}=\mathbb{E}[\theta_0\eta]$, for all $\eta\in  L^2( \mathcal{F} ;\mathbb{R}^n).$ In \cite{C2013} it has been proved that there is a Borel function $h_0:\mathbb{R}^d\longrightarrow\mathbb{R}^d$ such that $\theta_0=h_0(\mathfrak{v}_0),\ \  \ a.s..$ Then,
$$f(P_{\mathfrak{v}})-f(P_{\mathfrak{v}_0})=\mathbb{E}[h_0(\mathfrak{v}_0)(\mathfrak{v}-\mathfrak{v}_0)]+o(|\mathfrak{v}-\mathfrak{v}_0|_{L^2}),$$
$\mathfrak{v}\in L^2( \mathcal{F} ;\mathbb{R}^n).$\\
\indent We call $\partial_{\mu} f(\mu ,y):=h_0(y), \ y\in \mathbb{R}^n $, the derivative of $f: \mathcal{P}_2(\mathbb{R}^n)\longrightarrow \mathbb{R}^n$ at $\mu.$

\indent For mean-field type SDE and BSDE, we introduce the following notations. Let $(\Omega',{\cal{F}}',P')$ be a copy of the probability space $(\Omega,{\cal{F}},P)$. For each random variable $\xi$ over $(\Omega,{\cal{F}},P)$ we denote by $\xi'$ a copy of $\xi$ defined over $(\Omega',{\cal{F}}',P')$. $\mathbb{E}'[\cdot]=\int_{\Omega'}(\cdot)dP'$ acts only over the variables from $(\Omega',{\cal{F}}',P')$.

\indent Recall that for 2-Wasserstein metric $W_2(\cdot,\cdot),$ we have,
$$W_2(P_{\mu },P_{\nu})=\inf\{ (\mathbb{E}[|\mu'-\nu'|^2])^{\frac{1}{2}} \},$$
for all $\mu',\nu' \in L^2(\mathcal{F}_0;\mathbb{R}^d)$ with $P_{\mu'}=P_{\mu}, \ P_{\nu'}=P_{\nu}.$
\begin{definition}
We say that $f\in C^{1,1}_b(\mathcal{P}_2(\mathbb{R}^d))$  (continuously differentiable over $\mathcal{P}_2(\mathbb{R}^d)$ with Lipschitz-continuous bounded derivative), if for all $\mathfrak{v}\in L^2(\mathcal{F},\mathbb{R}^d)$, there exists a $P_{\mathfrak{v}}$-modification of $\partial_{\mu }f(P_{\mathfrak{v}},\cdot)$, again denote by $\partial_{\mu }f(P_{\mathfrak{v}},\cdot)$, such that $\partial_{\mu }f:\mathcal{P}_2(\mathbb{R}^d)\times \mathbb{R}^d\longrightarrow \mathbb{R}^d$ is bounded and Lipschitz continuous, i.e., there is some real constant $C$ such that
\begin{equation}
  \begin{aligned}
i)\ &|\partial_{\mu }f(\mu ,x)|\leq C, \mu \in\mathcal{P}_2( \mathbb{R}^d), x\in \mathbb{R}^d,&\\
ii)\ &|\partial_{\mu }f(\mu ,x)-\partial_{\mu }f(\mu' ,x')|\leq C(W_2(\mu,\mu')+|x-x'|), \mu, \mu' \in\mathcal{P}_2( \mathbb{R}^d),&
 \end{aligned}
\end{equation}
$ x,x'\in \mathbb{R}^d.$ We consider this function $\partial_{\mu }f$ as the derivative of $f$. 
\end{definition}

\indent Let us now consider a complete probability space $(\Omega,\mathcal{F},P)$ on which, we define a $d$-dimensional Brownian motion $B=(B^1,\cdots,B^d)=(B_t)_{t\in [0,T]}$, and $T\geq 0$ denotes an arbitrarily fixed time horizon. We make the following assumptions:\\
 \indent There is a sub-$\sigma$-field $\mathcal{F}_0\subset\mathcal{F}$ such that\\
\indent i) the Brownian motion $B$ is independent of $\mathcal{F}_0$, and\\
\indent ii) $\mathcal{F}_0$ is ``rich enough", i.e., $\mathcal{P}_2(\mathbb{R}^d)=\{ P_{\mathfrak{v}}, \mathfrak{v}\in L^2(\mathcal{F}_0; \mathbb{R}^d) \}.$\\
By $\mathbb{F}=(\mathcal{F}_t)_{t\in [0,T]}$ we denote the filtration generated by $B$, completed and augmented by $\mathcal{F}_0$.\\
\indent Given deterministic Lipschitz functions $\sigma: \mathbb{R}^d\times\mathcal{P}_2(\mathbb{R}^d)\longrightarrow \mathbb{R}^{d\times d}$ and  $b: \mathbb{R}^d\times\mathcal{P}_2(\mathbb{R}^d)\longrightarrow \mathbb{R}^{d},$ we consider for the initial state $(t,x)\in [0,T]\times \mathbb{R}^d$ and $\xi\in L^2(\mathcal{F}_t;\mathbb{R}^d)$ the stochastic differential equations (SDEs)

\begin{equation}
  \begin{aligned}
X^{t,\xi}_s=\xi+\int^s_t \sigma(X^{t,\xi}_r,P_{X^{t,\xi}_r})dB_r+\int^s_t \sigma(X^{t,\xi}_r,P_{X^{t,\xi}_r})dr,\ s\in[t,T],
  \end{aligned}
\end{equation}
and
\begin{equation}
  \begin{aligned}
  X^{t,x,\xi}_s=x+\int^s_t \sigma(X^{t,x,\xi}_r,P_{X^{t,\xi}_r})dB_r+\int^s_t \sigma(X^{t,x,\xi}_r,P_{X^{t,\xi}_r})dr,\ s\in[t,T].
  \end{aligned}
\end{equation}

We find out that under the assumptions above, both SDEs have a unique solution in $\mathcal{S}^2([t,T];\mathbb{R}^d),$ which is the space of $\mathbb{F}$-adapted continuous processes $Y=(Y_s)_{s\in[t,T]}$ with $\mathbb{E}[\sup_{s\in [t,T]}|Y_s|^2]\leq \infty.$

\begin{hypothesis}
The couple of coefficient $(\sigma,b)$ belongs to $C^{1,1}_b(\mathbb{R}^d\times \mathcal{P}_2(\mathbb{R}^d)\longrightarrow \mathbb{R}^{d\times d}\times \mathbb{R}^d),$ i.e., the components $\sigma_{i,j},b_j,\ 1\leq i,j \leq d,$ satisfy the following conditions:\\
\indent i) $\sigma_{i,j}(x,\cdot), b_{j}(x,\cdot)$ belong to $C^{1,1}_b( \mathcal{P}_2(\mathbb{R}^d))$, for all $x\in \mathbb{R}^d$\\
\indent ii) $\sigma_{i,j}(\cdot,\mu), b_{j}(\cdot,\mu)$ belong to $C^{1}_b(\mathbb{R}^d)$, for all $\mu\in \mathcal{P}_2(\mathbb{R}^d)$\\
iii) The derivatives $\partial_x\sigma_{i,j}, \partial_xb_{j}:\mathbb{R}^d\times \mathcal{P}_2(\mathbb{R}^d)\longrightarrow \mathbb{R}^d $, $\partial_{\mu}\sigma_{i,j}, \partial_{\mu}b_{j}:\mathbb{R}^d\times \mathcal{P}_2(\mathbb{R}^d)\times \mathbb{R}^d\longrightarrow \mathbb{R}^d $,  are bounded and Lipschitz continuous.

\end{hypothesis}

\begin{hypothesis}\label{H22}
The couple of coefficient $(\sigma,b)$ belongs to $C^{2,1}_b(\mathbb{R}^d\times \mathcal{P}_2(\mathbb{R}^d)\longrightarrow \mathbb{R}^{d\times d}\times \mathbb{R}^d),$ i.e., $(\sigma,b)\in C^{1,1}_b(\mathbb{R}^d\times \mathcal{P}_2(\mathbb{R}^d)\longrightarrow \mathbb{R}^{d\times d}\times \mathbb{R}^d)$ and the components $\sigma_{i,j},b_j,\ 1\leq i,j \leq d,$ satisfies the following conditions:\\
\indent i) $\partial_{x_k}\sigma_{i,j}(\cdot,\cdot), \partial_{x_k} b_{j}(\cdot,\cdot)$ belong to $C^{1,1}_b( \mathbb{R}^d \times\mathcal{P}_2(\mathbb{R}^d))$, for all $1\leq k \leq d;$\\
\indent ii) $\partial_{\mu }\sigma_{i,j}(\cdot,\cdot,\cdot), \partial_{\mu }b_{j}(\cdot, \cdot,\cdot)$ belong to $C^{1,1}_b(\mathbb{R}^d \times \mathcal{P}_2(\mathbb{R}^d)\times \mathbb{R}^d)$, for all $\mu\in \mathcal{P}_2(\mathbb{R}^d)$\\
iii) All the derivatives of $\sigma_{i,j}, b_{j}$, up to order 2 are bounded and Lipschitz continuous.
\end{hypothesis}

\indent The following theorem is taken from \cite{BLP2014}. It gives the It\^{o}'s formula related to a probability measure.

\begin{theorem}
Let $\Phi \in C^{2,1}_b(\mathbb{R}^d\times \mathcal{P}_2(\mathbb{R}^d)).$ Then, under Hypothesis \ref{H22}, for all $0\leq t \leq s \leq T, x\in \mathbb{R}^d, \xi\in L^2(\mathcal{F}_t;\mathbb{R}^d)$ the It\^o formula is satisfied as follow:
\begin{eqnarray}
&&\Phi(X^{t,x,P_{\xi}}_s, P_{X^{t,\xi}_s})-\Phi(x,P_{\xi})\nonumber\\
&=&\int^s_t\big(\sum^d_{i=1}\partial_{x_i}\Phi(X^{t,x,P_{\xi}}_r, P_{X^{t,\xi}_r})b_i(X^{t,x,P_{\xi}}_r, P_{X^{t,\xi}_r})\nonumber\\
&&+\frac{1}{2} \sum^d_{i,j,k=1} \partial^2_{x_i,x_j}\Phi(X^{t,x,P_{\xi}}_r, P_{X^{t,\xi}_r})(\sigma_{i,k}\sigma_{j,k})(X^{t,x,P_{\xi}}_r, P_{X^{t,\xi}_r})\nonumber\\
&&+\mathbb{E}'\big[\sum^d_{i=1}(\partial_{\mu }\Phi)_i(X^{t,x,P_{\xi}}_r, P_{X^{t,\xi}_r},(X^{t,{\xi}}_r)')b_i((X^{t,{\xi}}_r)', P_{X^{t,\xi}_r})\nonumber\\
&&+\frac{1}{2} \sum^d_{i,j,k=1} \partial_{y_i}((\partial_{\mu }\Phi)_j(X^{t,x,P_{\xi}}_r, P_{X^{t,\xi}_r},(X^{t,{\xi}}_r)')(\sigma_{i,k}\sigma_{j,k})((X^{t,{\xi}}_r)', P_{X^{t,\xi}_r})\big]\big)dr\nonumber\\
&&+\int^s_t \sum^d_{i,j=1}\partial_{x_i}\Phi(X^{t,x,P_{\xi}}_r, P_{X^{t,\xi}_r})\sigma_{i,j}(X^{t,x,P_{\xi}}_r, P_{X^{t,\xi}_r})dB^j_r, \ s\in [t,T].
\end{eqnarray}

\end{theorem}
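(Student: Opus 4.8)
The plan is to recover the formula from the classical It\^o formula by a Riemann-sum (partition) argument, splitting each increment into a ``spatial'' part, where the measure is frozen, and a ``measure'' part, where the spatial variable is frozen. Fix $0\le t\le s\le T$, $x\in\mathbb{R}^d$, $\xi\in L^2(\mathcal{F}_t;\mathbb{R}^d)$ and write $X_r:=X^{t,x,P_\xi}_r$, $\Theta_r:=X^{t,\xi}_r$, $\mu_r:=P_{\Theta_r}$. Under Hypothesis~\ref{H22} both SDEs have solutions in $\mathcal{S}^2([t,T];\mathbb{R}^d)$, so $r\mapsto\Theta_r$ (hence $r\mapsto X_r$) is continuous in $L^2(\Omega)$ and $r\mapsto\mu_r$ is continuous in $W_2$, with the moment bounds $\mathbb{E}[\sup_r|\Theta_r|^2+\sup_r|X_r|^2]<\infty$. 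For a partition $\pi:t=t_0<t_1<\cdots<t_N=s$ telescope
\[
\Phi(X_s,\mu_s)-\Phi(x,\mu_t)=\sum_{k}\big(\Phi(X_{t_{k+1}},\mu_{t_{k+1}})-\Phi(X_{t_k},\mu_{t_{k+1}})\big)+\sum_{k}\big(\Phi(X_{t_k},\mu_{t_{k+1}})-\Phi(X_{t_k},\mu_{t_k})\big).
\]

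\noindent For the first sum I would apply, on each $[t_k,t_{k+1}]$, the standard It\^o formula to the $C^2_b$ function $y\mapsto\Phi(y,\mu_{t_{k+1}})$ (measure frozen), producing the first- and second-order spatial terms plus the $dB^j$ martingale term, with the coefficients of $X$ evaluated at the frozen law $\mu_{t_{k+1}}$. Summing over $k$ and letting $|\pi|\to0$, the continuity of $r\mapsto\mu_r$ in $W_2$ together with the boundedness and Lipschitz continuity of $\partial_x\Phi$, $\partial^2_{xx}\Phi$, $b$, $\sigma$ (Hypothesis~\ref{H22}) lets me replace $\mu_{t_{k+1}}$ by $\mu_r$ inside the derivatives at an $o(1)$ cost and pass to the limit by dominated convergence; this delivers
\[
\int_t^s\Big(\sum_i\partial_{x_i}\Phi(X_r,\mu_r)b_i(X_r,\mu_r)+\tfrac12\sum_{i,j,k}\partial^2_{x_i,x_j}\Phi(X_r,\mu_r)(\sigma_{i,k}\sigma_{j,k})(X_r,\mu_r)\Big)dr+\int_t^s\sum_{i,j}\partial_{x_i}\Phi(X_r,\mu_r)\sigma_{i,j}(X_r,\mu_r)dB^j_r.
\]

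\noindent The heart of the proof is the second sum. Freezing $X_{t_k}$, consider the lifted function $F_k(\cdot):=\Phi(X_{t_k},P_{\,\cdot\,})$ on $L^2(\mathcal{F};\mathbb{R}^d)$; by $C^{2,1}_b$-regularity of $\Phi$ (and the Riesz/structure representation recalled after the definition of $\partial_\mu$), $F_k$ is twice Fr\'echet differentiable with Lipschitz second derivative, the derivatives being represented by $\partial_\mu\Phi$ and $\partial_y\partial_\mu\Phi$ (a cross-copy term involving $\partial^2_{\mu\mu}\Phi$ also occurs in $D^2F_k$ but turns out negligible). With $\eta_k:=\Theta_{t_{k+1}}-\Theta_{t_k}=\int_{t_k}^{t_{k+1}}b(\Theta_r,\mu_r)dr+\int_{t_k}^{t_{k+1}}\sigma(\Theta_r,\mu_r)dB_r$, a second-order Taylor expansion of $F_k$ in $L^2$ gives
\[
\Phi(X_{t_k},\mu_{t_{k+1}})-\Phi(X_{t_k},\mu_{t_k})=\mathbb{E}'\big[\partial_\mu\Phi(X_{t_k},\mu_{t_k},\Theta'_{t_k})\cdot\eta'_k\big]+\tfrac12\,\mathbb{E}'\big[\langle\partial_y\partial_\mu\Phi(X_{t_k},\mu_{t_k},\Theta'_{t_k})\eta'_k,\eta'_k\rangle\big]+R_k,
\]
where primes denote the independent copy on $(\Omega',\mathcal{F}',P')$ and $R_k$ collects the cross-copy $\partial^2_{\mu\mu}\Phi$-term and the Taylor remainder. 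Since $\mathbb{E}'$ annihilates the $dB'$-martingale part of $\eta'_k$, the first term contributes $\int_{t_k}^{t_{k+1}}\mathbb{E}'[\partial_\mu\Phi(X_{t_k},\mu_{t_k},\Theta'_r)\cdot b(\Theta'_r,\mu_r)]dr$ up to $o(\Delta_k)$; expanding $\eta'_k$ in the quadratic term, only the martingale$\times$martingale contribution survives to first order, producing $\tfrac12\int_{t_k}^{t_{k+1}}\mathbb{E}'[\sum_{i,j,k}\partial_{y_i}(\partial_\mu\Phi)_j(X_{t_k},\mu_{t_k},\Theta'_r)(\sigma_{i,k}\sigma_{j,k})(\Theta'_r,\mu_r)]dr$ up to $o(\Delta_k)$. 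Summing and letting $|\pi|\to0$ (shifting $(X_{t_k},\mu_{t_k})\to(X_r,\mu_r)$ by Lipschitz continuity) yields exactly the two $\mathbb{E}'[\cdots]$ terms of the claimed formula.

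\noindent I expect the main obstacle to be the uniform control of the errors $R_k$ and of the shifting errors, i.e.\ showing $\sum_k\mathbb{E}|R_k|\to0$ and that all approximations are $o(1)$ uniformly in $k$ as $|\pi|\to0$. The decisive estimates are: (i) the increment bound $\mathbb{E}|\eta_k|^2\le C\Delta_k$, which needs only $\mathbb{E}\sup_r|\Theta_r|^2<\infty$ and the linear growth of $b,\sigma$; (ii) the Banach-space Taylor estimate, by which the remainder of the $L^2$-expansion of $F_k$ is bounded by $C\|\eta_k\|_{L^2(\Omega)}^3\le C\Delta_k^{3/2}$ (Lipschitz second Fr\'echet derivative), so $\sum_k\Delta_k^{3/2}\le|\pi|^{1/2}(s-t)\to0$ --- this is precisely why no extra moments of $\Theta$ are needed; (iii) for the cross-copy term, conditioning the two independent martingale increments on $\mathcal{F}'_{t_k}\vee\mathcal{F}''_{t_k}$ shows it is $O(\Delta_k^2)$; and (iv) boundedness and Lipschitz continuity of $\partial_\mu\Phi$, $\partial_y\partial_\mu\Phi$, $\partial_x\Phi$, $\partial^2_{xx}\Phi$, $b$, $\sigma$ (from Hypothesis~\ref{H22} and $\Phi\in C^{2,1}_b$), combined with $W_2(\mu_r,\mu_{t_k})\le(\mathbb{E}|\Theta_r-\Theta_{t_k}|^2)^{1/2}$, to absorb the measure- and space-shifting errors. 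Once these are assembled, adding the limits of the two telescoping sums produces the asserted It\^o formula.
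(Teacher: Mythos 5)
The paper does not actually prove this theorem: it is stated as a result ``taken from \cite{BLP2014}'', so there is no in-paper argument to compare yours against. Your strategy is, however, essentially the proof given in that reference: partition $[t,s]$, split each increment into a spatial part with the measure frozen (handled by the classical It\^o formula, the frozen law being shifted back to $\mu_r$ at a cost $O(\Delta_k^{3/2})$ per subinterval in the $dr$-terms and $O(\Delta_k^2)$ in variance for the stochastic-integral terms) and a measure part with the point frozen (handled by a second-order Taylor expansion of the lifted map on $L^2$), and observe that the cross-copy $\partial^2_{\mu\mu}$-contribution is $O(\Delta_k^2)$ because the two independent increments each have conditional mean $O(\Delta_k)$. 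All the structural steps, including the reason the final formula contains no $\partial^2_{\mu\mu}\Phi$ term, are correctly identified.

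One estimate in your item (ii) is too strong as stated. Lipschitz continuity of $\partial_\mu\Phi$, $\partial_y\partial_\mu\Phi$ and $\partial^2_{\mu\mu}\Phi$ as functions on $\mathbb{R}^d\times\mathcal{P}_2(\mathbb{R}^d)\times\mathbb{R}^d$ (resp.\ with an extra $\mathbb{R}^d$ factor) does not yield a Taylor remainder bounded by $C\|\eta_k\|^3_{L^2}$: unwinding the difference $\big(D^2F_k(\vartheta_0+\theta\eta_k)-D^2F_k(\vartheta_0)\big)(\eta_k,\eta_k)$ produces terms of the form $\mathbb{E}'[|\eta'_k|^3]$ coming from the $|y-y'|$ part of the Lipschitz estimate, and third moments of the increments are not controlled when one only knows $\xi\in L^2$ and that $b,\sigma$ have linear growth. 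So the claim that ``no extra moments of $\Theta$ are needed'' is precisely the point that does not come for free. The standard repair is either (a) to prove the formula first for $\xi\in L^4$, where $\mathbb{E}|\eta_k|^4\le C\Delta_k^2$ and hence $\mathbb{E}|\eta_k|^3\le C\Delta_k^{3/2}$ by H\"older, and then extend to $\xi\in L^2$ by density together with the $W_2$-continuity of both sides of the identity in $\xi$; or (b) to replace the cubic bound by a uniform $o(\|\eta_k\|^2_{L^2})$ remainder obtained from the boundedness combined with the continuity of the second-order derivatives, which still sums to $o(1)$ since $\sum_k\|\eta_k\|^2_{L^2}\le C(s-t)$. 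With that correction your outline reproduces the argument of \cite{BLP2014}.
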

\indent For mean-field type SDE and BSDE, we have still to introduce some notations. Let $({\Omega}',{\mathcal{F}}',{P}')$ be a copy of the probability space $(\Omega,{\mathcal{F}},P)$. For any random variable $\xi$ over $(\Omega,{\mathcal{F}},P)$, we denote by ${\xi}'$ its copy on ${\Omega}'$, respectively, which means that they have the same law as $\xi$, but defined over  $({\Omega}',{\mathcal{F}}',{P}')$ . $\mathbb{{E}}'[\cdot]=\int_{{\Omega}'}(\cdot)d{P'}$ act only over the variables from  ${\omega}'$.
\indent About stochastic delay and anticipated differential equations, we would like to introduce the following lemmas for the convenience of the readers. Our Lemma \ref{le3} is Lemma 3.1 of Peng \cite{P2004}. Lemma \ref{le1}, which is Theorem 3.1 of Buckdahn \cite{BLP2009} , is a fundamental result of mean-filed BSDEs: an existence and uniqueness theorem. Lemma \ref{le2} is the comparison theorem for solutions of mean-filed BSDEs that can be found in Buckdahn \cite{BLP2009}.

\begin{lemma}\label{le3}
For a fixed $\xi \in L^2(\mathscr{F}_T)$ and $g_0(\cdot)$ which is an $\mathscr{F}_t$-adapted process satisfying $\EE[(\int_0^T | g_0(t) |dt)^2] <\infty$, there exists a unique pair of processes $(y., z.) \in H^2_{\mathscr{F}}(0, T; \mathbb{R}^{1+d})$ satisfying the following BSDE:
\begin{align*}
y_t = \xi + \int_t^T g_0(s) ds - \int_t^T z_s dW_s, \quad t \in [0, T].
\end{align*}
If  $g_0(\cdot) \in L_{\mathscr{F}^2(0, T)}$, then $(y., z.) \in S^2_{\mathscr{F}}(0, T) \times H^2_{\mathscr{F}}(0, T; \mathbb{R}^d).$ We have the following basic estimate:
\begin{align}\label{estimates1}
|y_t|^2 &+ \EE^{\mathscr{F}_t} \[ \int_t^T \( \frac{\beta}{2} |y_s|^2 + |z_s|^2 \) e^{\beta(s-t)} ds \] \nonumber \\
& \le  \EE^{\mathscr{F}_t} \[ |\xi|^2 e^{\beta (T- t)} \] + \frac{2}{\beta} \EE^{\mathscr{F}_t} \[ \int_t^T |g_0(s)|^2 e^{\beta(s-t)} ds \]
\end{align}
In particular,
\begin{align}\label{eatimates2}
|y_0|^2 &+ \EE \[ \int_0^T \( \frac{\beta}{2} |y_s|^2 + |z_s|^2 \) e^{\beta s} ds \]  \nonumber \\
& \le  \EE \[ |\xi|^2 e^{\beta T} \] + \frac{2}{\beta} \EE \[ \int_0^T |g_0(s)|^2 e^{\beta s} ds \]
\end{align}
where $\beta >0$ is an arbitrary constant.
\end{lemma}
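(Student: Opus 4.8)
The plan is to treat the three assertions of the lemma in turn: first the existence and uniqueness of the pair $(y_\cdot, z_\cdot)$ in $H^2_{\mathscr{F}}(0,T;\mathbb{R}^{1+d})$, then the improved regularity $y_\cdot \in S^2_{\mathscr{F}}(0,T)$ under square integrability of $g_0$ in time, and finally the weighted a priori estimate \eqref{estimates1}, whose value at $t=0$ is \eqref{eatimates2}.

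Since the generator $g_0$ does not depend on $(y,z)$, I would construct the solution directly from the martingale representation theorem rather than by a fixed-point argument. Set $M_t := \EE\big[\xi + \int_0^T g_0(s)\,ds \mid \mathscr{F}_t\big]$; the assumptions $\xi \in L^2(\mathscr{F}_T)$ and $\EE[(\int_0^T |g_0(s)|\,ds)^2] < \infty$ make $M$ a square-integrable $\mathbb{F}$-martingale, hence there is a unique $z_\cdot \in H^2_{\mathscr{F}}(0,T;\mathbb{R}^d)$ with $M_t = M_0 + \int_0^t z_s\,dW_s$. Defining $y_t := M_t - \int_0^t g_0(s)\,ds = \EE\big[\xi + \int_t^T g_0(s)\,ds \mid \mathscr{F}_t\big]$, one checks by differencing at times $t$ and $T$ that $y_t = \xi + \int_t^T g_0(s)\,ds - \int_t^T z_s\,dW_s$, and $\EE[\int_0^T |y_t|^2\,dt] < \infty$ follows from Doob's inequality for $M$ together with $\int_0^T |g_0| \in L^2$; when $g_0$ is moreover square integrable in time, the same Doob $L^2$-inequality for $M$, combined with $\sup_{0\le t\le T}|\int_0^t g_0|\le \int_0^T|g_0|\in L^2$, upgrades this to $\EE[\sup_{0\le t\le T} |y_t|^2] < \infty$, i.e. $y_\cdot \in S^2_{\mathscr{F}}(0,T)$. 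For uniqueness, the difference of two solutions satisfies $\Delta y_t = -\int_t^T \Delta z_s\,dW_s$ with $\Delta y_T = 0$; taking $\EE^{\mathscr{F}_t}$ gives $\Delta y \equiv 0$, and then It\^o's isometry forces $\Delta z \equiv 0$ in $H^2_{\mathscr{F}}(0,T;\mathbb{R}^d)$.

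For the estimate, I would apply It\^o's formula to $s \mapsto e^{\beta s}|y_s|^2$ on $[t,T]$, using $dy_s = -g_0(s)\,ds + z_s\,dW_s$ and $y_T = \xi$, to obtain
\begin{equation*}
e^{\beta t}|y_t|^2 + \int_t^T e^{\beta s}\big(\beta|y_s|^2 + |z_s|^2\big)\,ds = e^{\beta T}|\xi|^2 + \int_t^T 2 e^{\beta s} y_s g_0(s)\,ds - \int_t^T 2 e^{\beta s} y_s z_s\,dW_s .
\end{equation*}
Bounding $2 y_s g_0(s) \le \tfrac{\beta}{2}|y_s|^2 + \tfrac{2}{\beta}|g_0(s)|^2$, absorbing the extra $|y_s|^2$ term into the left-hand side, taking the conditional expectation $\EE^{\mathscr{F}_t}$, and dividing by $e^{\beta t}$ produces exactly \eqref{estimates1}; setting $t=0$ gives \eqref{eatimates2}. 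The one delicate point — and essentially the only obstacle — is that the stochastic integral $\int_0^{\,\cdot} e^{\beta s} y_s z_s\,dW_s$ is a priori only a local martingale, so its conditional expectation need not vanish; I would handle this in the standard way, localising by stopping times $\tau_k \uparrow T$ that reduce it to a genuine martingale, taking conditional expectations on the stopped identity, and letting $k\to\infty$ with Fatou's lemma on the left and dominated convergence on the right, the dominating function coming from $y_\cdot \in S^2_{\mathscr{F}}(0,T)$ and $z_\cdot \in H^2_{\mathscr{F}}(0,T;\mathbb{R}^d)$ (which is where square integrability of $g_0$ in time enters).
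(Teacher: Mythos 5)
The paper does not prove this lemma at all: it is imported verbatim as Lemma 3.1 of Peng \cite{P2004}, so there is no in-paper argument to compare against. Your proof is correct and is essentially the standard (and, as far as the cited source goes, the original) argument: existence via the martingale representation of $M_t=\EE\big[\xi+\int_0^T g_0(s)\,ds\mid\mathscr{F}_t\big]$, uniqueness by conditioning the difference equation, and the a priori bound by applying It\^o's formula to $e^{\beta s}|y_s|^2$ with the Young inequality $2y_sg_0(s)\le\frac{\beta}{2}|y_s|^2+\frac{2}{\beta}|g_0(s)|^2$, the localisation of the stochastic integral being handled exactly as you describe.
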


The following is a foundamental result for the existence of a unique solution to mean-filed BSDEs due to Buckdahn \cite{BLP2009}.(Theorem 3.1)

\begin{lemma}\label{le1}
Under the assumptions (H1) and (H2) of \cite{PY}, and $\delta$, $\zeta$ satisfy (C1) and (C2). Then for any terminal conditions $\xi \in S^2_{\mathscr{F}}(T, T+K; \RR^m)$ and $\eta \in L^2_{\mathscr{F}}(T, T+K; \RR^{m \times d})$, the anticipated BSDE (\ref{eqq4})
has a unique adapted solution
\begin{align*}
 (Y_t, Z_t) \in \mathcal{S}^2_{\mathscr{F}}(0, T+K; \RR^m) \times H^2_{\mathscr{F}}(0, T+K;  \RR^{m\times d}). 
\end{align*}
\end{lemma}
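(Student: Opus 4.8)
The plan is to obtain existence and uniqueness by a Picard/fixed‑point argument in a Banach space equipped with an exponentially weighted norm, using Lemma~\ref{le3} to invert the non‑anticipating part of the equation and the structural condition (C2) to absorb the anticipating terms.

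Fix $\beta>0$, to be chosen large later, and let $\mathcal{H}_\beta$ denote the set of pairs $(Y,Z)\in S^2_{\mathscr{F}}(0,T+K;\RR^m)\times H^2_{\mathscr{F}}(0,T+K;\RR^{m\times d})$ that agree with the prescribed data, $Y_t=\xi_t$ and $Z_t=\eta_t$ on $(T,T+K]$, endowed with $\|(Y,Z)\|_\beta^2=\EE\big[\int_0^{T+K}e^{\beta s}(|Y_s|^2+|Z_s|^2)\,ds\big]$; this is a complete metric space, and the difference of any two of its members vanishes on $(T,T+K]$. Given $(U,V)\in\mathcal{H}_\beta$, set $g_0(s):=f(s,U_s,V_s,U_{s+\delta(s)},V_{s+\zeta(s)})$, $s\in[0,T]$. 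Assumptions (H1)–(H2) of \cite{PY} guarantee that $g_0$ is $\mathbb{F}$‑adapted, and together with (C1)–(C2) that $\EE[\int_0^T|g_0(s)|^2ds]<\infty$; the only non‑routine point is the anticipating contribution, where $\EE[\int_0^T|U_{s+\delta(s)}|^2ds]\le L\,\EE[\int_0^{T+K}|U_r|^2dr]<\infty$ by (C2) applied to the deterministic function $r\mapsto\EE[|U_r|^2]$, and similarly for $V$ with the constant attached to $\zeta$. By Lemma~\ref{le3} there is then a unique $(Y,Z)\in S^2_{\mathscr{F}}(0,T)\times H^2_{\mathscr{F}}(0,T)$ with $Y_t=\xi_T+\int_t^T g_0(s)\,ds-\int_t^T Z_s\,dB_s$; prolonging it by the terminal data on $(T,T+K]$ defines a map $\Gamma:\mathcal{H}_\beta\to\mathcal{H}_\beta$, and a pair in $\mathcal{H}_\beta$ solves (\ref{eqq4}) if and only if it is a fixed point of $\Gamma$.

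The crux is that $\Gamma$ is a contraction once $\beta$ is large. Let $(U,V),(\bar U,\bar V)\in\mathcal{H}_\beta$, write $(Y,Z)=\Gamma(U,V)$, $(\bar Y,\bar Z)=\Gamma(\bar U,\bar V)$, and let $\Delta$ denote differences. Since the terminal data cancel, estimate (\ref{eatimates2}) of Lemma~\ref{le3}, applied to the BSDE solved by $(\Delta Y,\Delta Z)$, yields for $\beta\ge2$
$$\EE\Big[\int_0^{T+K}e^{\beta s}\big(|\Delta Y_s|^2+|\Delta Z_s|^2\big)ds\Big]\le\frac{2}{\beta}\,\EE\Big[\int_0^{T}e^{\beta s}|\Delta g_0(s)|^2\,ds\Big].$$
By the Lipschitz condition (H1), $|\Delta g_0(s)|^2\le4C^2\big(|\Delta U_s|^2+|\Delta V_s|^2+\EE^{\mathscr{F}_s}[|\Delta U_{s+\delta(s)}|]^2+\EE^{\mathscr{F}_s}[|\Delta V_{s+\zeta(s)}|]^2\big)$, and the first two terms integrate to at most $\|(\Delta U,\Delta V)\|_\beta^2$. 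For the anticipating terms, apply conditional Jensen, then the bound $e^{\beta s}\le e^{\beta(s+\delta(s))}$ (valid because $\delta\ge0$), then (C2) with the deterministic function $g(r)=\EE[e^{\beta r}|\Delta U_r|^2]$, which vanishes on $(T,T+K]$:
$$\EE\Big[\int_0^{T}e^{\beta s}\,\EE^{\mathscr{F}_s}[|\Delta U_{s+\delta(s)}|]^2ds\Big]\le\int_0^T\EE\big[e^{\beta(s+\delta(s))}|\Delta U_{s+\delta(s)}|^2\big]ds\le L\,\EE\Big[\int_0^{T+K}e^{\beta r}|\Delta U_r|^2dr\Big]\le L\,\|(\Delta U,\Delta V)\|_\beta^2,$$
and likewise for $V$ and $\zeta$. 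Collecting, $\|\Gamma(U,V)-\Gamma(\bar U,\bar V)\|_\beta^2\le\frac{8C^2(1+L_0)}{\beta}\,\|(\Delta U,\Delta V)\|_\beta^2$, where $L_0$ is the larger of the two constants furnished by (C2) for $\delta$ and $\zeta$; choosing $\beta>\max\{2,\,8C^2(1+L_0)\}$ makes $\Gamma$ a strict contraction. The Banach fixed‑point theorem gives a unique fixed point $(Y,Z)\in\mathcal{H}_\beta$; since the $\beta$‑norm is equivalent to the standard one over the finite interval $[0,T+K]$, this is the unique adapted solution of (\ref{eqq4}), with the required $S^2_{\mathscr{F}}\times H^2_{\mathscr{F}}$ regularity provided on $[0,T]$ by Lemma~\ref{le3} and on $(T,T+K]$ by the hypotheses on $\xi$ and $\eta$.

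The step I expect to be the main obstacle is precisely the contraction estimate: because the equation at time $s$ looks ahead to $Y_{s+\delta(s)}$ and $Z_{s+\zeta(s)}$, the usual backward‑in‑time a priori bounds do not close by themselves, and it is exactly condition (C2) — which converts a time‑shifted integral back into an ordinary one with a controlled constant — combined with the exponential weight, whose large parameter overcomes the Lipschitz constant $C$, that forces the iteration to converge. Everything else is routine bookkeeping built on Lemma~\ref{le3}.
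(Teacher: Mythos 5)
Your proof is correct and follows the standard Peng--Yang contraction argument: the paper itself states this lemma without proof (citing \cite{PY}), but the same strategy --- an exponentially weighted norm, the a priori estimate of Lemma \ref{le3}, condition (C2) to fold the time-shifted integrals back into ordinary ones, and a large $\beta$ to beat the Lipschitz constant --- is exactly what the paper deploys for its own mean-field generalization in Theorem \ref{unique}. The only cosmetic difference is that you freeze all arguments of the driver and invert via Lemma \ref{le3} at each Picard step, whereas the cited source freezes only the anticipated arguments and solves a Lipschitz BSDE at each step; both close with the same contraction constant up to bookkeeping.
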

\begin{remark}
We emphasize that, due to our notation, the driving coefficient of (\ref{eqq3}) has to be interpreted as followings:
\begin{align*}
\mathbb{E}^{'}[f(s, Y_s^{'}, Z_s^{'}, Y_s, Z_s)](\omega) & = \mathbb{E}^{'}[f(s, Y_s^{'}, Z_s^{'}, Y_s(\omega), Z_s(\omega))]  \nonumber \\
&= \int_{\Omega} f(\omega^{'}, \omega, s, Y_s^{'}(\omega^{'}), Z_s^{'}(\omega^{'}), Y_s(\omega), Z_s(\omega)) P(d \omega^{'}).
\end{align*}
\end{remark}

The proof of the following comparison theorem for mean filed BSDE can be found in Buckdahn \cite{BLP2009}.

\begin{lemma}\label{le2}
Let $\bar{f}_{i}(t, y, z, y', z'), i=1, 2,$ be two drivers of mean-filed BSDEs satisfying the the assumptions (A3) and (A4) of \cite{BLP2009}. Moreover, suppose:
\begin{enumerate}[(i)]
\item  One of the two coefficients is independent of $z'$.

\item One of the two coefficients is nondecreasing in $y'$.
\end{enumerate}
Let $\xi_1, \xi_2 \in L^2(\Omega, \mathscr{F}_T, P)$ and denote by $(Y^1, Z^1)$ and $(Y^2, Z^2)$ the solution of the mean-field BSDE (\ref{eqq3}) with data $(\xi_1, f_1)$ and $(\xi_2, f_2)$, respectively. Then of $\xi_1 \ge \xi_2,$ P-a.s., and $f_1 \le f_2,$ $\bar{P}$-a.s., it holds that also $Y_t^1 \le Y_t^2, t \in [0, T],  $ P- a.s.
\end{lemma}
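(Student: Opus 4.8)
The plan is to follow the classical comparison argument for BSDEs, the only new ingredient being the nonlocal term $\mathbb{E}'[\cdot]$ in the driver, which is controlled precisely by the hypotheses (i) and (ii). Set $(\hat Y,\hat Z):=(Y^1-Y^2,Z^1-Z^2)$ and $\hat\xi:=\xi_1-\xi_2$. Then $(\hat Y,\hat Z)$ solves the BSDE whose driver is the difference of the two mean-field drivers evaluated along the respective solutions, and I would first linearise this difference. After relabelling the two equations if necessary (and, should (i) and (ii) concern different coefficients, after one extra intermediate substitution), assume $f_1$ is independent of $z'$ and nondecreasing in $y'$, and write it as $f_1(s,y,z,y')$. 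Splitting,
\[
\mathbb{E}'\big[f_1\big(s,Y^1_s,Z^1_s,(Y^1_s)'\big)\big]-\mathbb{E}'\big[f_2\big(s,Y^2_s,Z^2_s,(Y^2_s)',(Z^2_s)'\big)\big]=\hat f_s+\Delta^f_s,
\]
where $\hat f_s:=\mathbb{E}'[(f_1-f_2)(s,Y^2_s,Z^2_s,(Y^2_s)',(Z^2_s)')]$ has, by the driver comparison, the same sign as $\hat\xi$ — say both are nonnegative, the opposite orientation being symmetric — and $\Delta^f_s:=\mathbb{E}'[f_1(s,Y^1_s,Z^1_s,(Y^1_s)')-f_1(s,Y^2_s,Z^2_s,(Y^2_s)')]$. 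By assumption (A3) (Lipschitz continuity) and the usual device for Lipschitz increments, $\Delta^f_s=\tilde a_s\hat Y_s+\tilde b_s\cdot\hat Z_s+\mathbb{E}'[\alpha_s(\hat Y_s)']$ with $\tilde a,\tilde b$ bounded and $\mathbb F$-adapted and $\alpha_s=\alpha_s(\omega,\omega')$ bounded; there is no $\mathbb{E}'[\,\cdot\,(\hat Z_s)']$ term because $f_1$ does not depend on $z'$, and $\alpha_s\ge 0$ because $f_1$ is nondecreasing in $y'$. Thus $(\hat Y,\hat Z)$ is the unique solution of a linear mean-field BSDE with terminal value $\hat\xi\ge 0$, a nonnegative source $\hat f$, and a nonnegative nonlocal coefficient $\alpha$.

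It then suffices to prove that such a linear mean-field BSDE has a nonnegative solution. I would argue this by a Picard iteration in which the nonlocal term is kept frozen: put $\hat Y^0\equiv 0$ and, for $n\ge 0$, let $(\hat Y^{n+1},\hat Z^{n+1})$ solve the ordinary linear BSDE obtained by replacing $\mathbb{E}'[\alpha_s(\hat Y_s)']$ with the prescribed source $h^n_s:=\mathbb{E}'[\alpha_s(\hat Y^n_s)']$. Since $\alpha$ is bounded and $h^n_s$ depends on $\omega$ only through $\mathcal F_s$-measurable data, $h^n$ is a genuine square-integrable $\mathbb F$-adapted process, so each step is a classical linear BSDE. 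Because $\hat\xi\ge 0$, $\hat f\ge 0$ and $h^0=0$, the classical comparison theorem for linear BSDEs gives $\hat Y^1\ge 0$; and $\hat Y^n\ge 0$ forces $h^n\ge 0$ (as $\alpha\ge 0$), hence $\hat Y^{n+1}\ge 0$. By induction $\hat Y^n_t\ge 0$ for all $n$, $t$, $P$-a.s. (Equivalently, one may absorb $\tilde a,\tilde b$ into a strictly positive stochastic exponential $\Gamma$ via the change of unknown $\hat Y\mapsto\Gamma\hat Y$, which reduces each step to the situation of Lemma~\ref{le3}, whose solution $\mathbb{E}[\,\cdot\mid\mathcal F_t]$ of nonnegative data is plainly nonnegative.)

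Finally, $|h^n_s-h^{n-1}_s|\le C\,\mathbb{E}[|\hat Y^n_s-\hat Y^{n-1}_s|]$, so the a priori estimate~\eqref{estimates1} of Lemma~\ref{le3} applied to $(\hat Y^{n+1}-\hat Y^n,\hat Z^{n+1}-\hat Z^n)$, with $\beta$ large, gives a strict contraction in a weighted $S^2\times H^2$ norm; hence $(\hat Y^n,\hat Z^n)$ converges, its limit solves the linear mean-field BSDE of the previous paragraph, and by uniqueness it coincides with $(\hat Y,\hat Z)$. Letting $n\to\infty$ in $\hat Y^n\ge 0$ yields $\hat Y_t=Y^1_t-Y^2_t\ge 0$ for every $t\in[0,T]$, $P$-a.s., which is the asserted comparison between $Y^1$ and $Y^2$. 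The only real obstacle is the linearisation: one must split the driver difference so that the resulting nonlocal term carries a nonnegative coefficient and involves $(\hat Y_s)'$ but not $(\hat Z_s)'$ — precisely what hypotheses (i) and (ii) guarantee, with an additional intermediate substitution needed when those two conditions pertain to different coefficients — since otherwise the iteration cannot propagate positivity. Once the equation for $(\hat Y,\hat Z)$ has been brought to this form, the rest is the standard linear-BSDE comparison combined with the Picard scheme already used to construct mean-field BSDE solutions.
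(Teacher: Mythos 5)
The paper never proves this lemma: it is quoted from Buckdahn--Li--Peng \cite{BLP2009} (their comparison theorem), and the nearest in-paper analogue is the proof of the MFABSDE comparison theorem in Section 3.2. That argument --- and the one in \cite{BLP2009} --- proceeds by a monotone iteration of \emph{classical} BSDEs in which the primed (nonlocal) arguments of the well-behaved driver are frozen at the previous iterate, the classical comparison theorem is applied at each step, and the resulting monotone sequence is shown to converge to $Y^2$. Your route is genuinely different: you linearise the driver difference exactly, observe that hypotheses (i) and (ii) force the nonlocal part of the linearised driver to act on $(\hat Y)'$ only and through a nonnegative, bounded kernel $\alpha$, and then propagate nonnegativity through a Picard scheme that freezes only the nonlocal source. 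For the case it actually treats, your argument is correct; it trades the monotone-limit step for the (routine) verification that the difference-quotient coefficients $\tilde a,\tilde b,\alpha$ are bounded, adapted and of the right sign, and that each frozen source $h^n$ is a legitimate adapted square-integrable process. Note also that the lemma as printed is internally inconsistent ($\xi_1\ge\xi_2$ together with $f_1\le f_2$ cannot both push toward $Y^1\le Y^2$); you silently prove the coherent orientation, which is the one in \cite{BLP2009}.

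The one substantive gap is the parenthetical claim that the case in which (i) and (ii) are satisfied by \emph{different} coefficients follows after ``one extra intermediate substitution.'' This is not automatic. Your linearisation needs a single driver, say $f_1$, to be simultaneously independent of $z'$ and nondecreasing in $y'$: the splitting through $f_1(\Theta^2)$ uses both properties of $f_1$, the splitting through $f_2(\Theta^1)$ uses both properties of $f_2$, and if $f_1$ satisfies only (i) while $f_2$ satisfies only (ii), neither splitting yields a linearised equation that is free of an $\mathbb{E}'\left[\,\cdot\,(\hat Z)'\right]$ term and has a sign-controlled $y'$-kernel. The obvious intermediate equations (same driver with an intermediate terminal value, or an intermediate driver with the same terminal value) run into exactly the same obstruction, and the paper's own iteration has the identical limitation --- which is presumably why, before Theorem 3.3, the authors assume outright that all restrictions sit on $f_2$. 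You should either restrict your proof to the case where one driver satisfies both conditions (which covers every use made of the lemma in this paper) or exhibit the missing substitution explicitly; as written, that step is asserted rather than proved.
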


\section{Basic properties of GMFDSDE and MFABSDE}

\subsection{Existence and uniqueness theorems}

\indent Consider equation (\ref{eqc2}), where $\delta(\cdot)$ and $\zeta(\cdot)$ are two $\mathbb{R}^{+}$-valued continuous functions defined on $[0,T]$ such that:

(C1) There exists a constant $K \ge 0$ such that, for all $s \in [0, T]$,
$$
s+\delta(s) \le T+K; \quad s+ \zeta(s) \le T+K,
$$
\indent (C2) There exists a constant $L \ge 0$ such that, for all $t \in [0, T]$ and for all nonnegative and integrable $g(\cdot)$, 
$$
\int_t^T g(s+\delta(s))ds \le L \int_t^{T+K} g(s) ds;
$$
$$
\int_t^T g(s+\zeta(s))ds \le L \int_t^{T+K} g(s) ds.
$$

The setting of our problem is as follows: to find a pair of $\mathscr{F}$-adapted processes $(Y_., Z_.) \in S^2_{\mathscr{F}}(0, T+K; \mathbb{R}^m) \times H^2_{\mathscr{F}}(0, T+K; \mathbb{R}^{m \times d}) $ satisfying MFABSDE (\ref{eqc2}).

Assume that for all $s\in [0,T]$, $f(s, \cdot): \mathbb{R}^m \times \mathbb{R}^{m \times d} \times H^2(\mathscr{F}_r; \mathbb{R}^m) \times H^2(\mathscr{F}_{\bar{r}}; \mathbb{R}^{m\times d} ) \times \mathbb{R}^m \times \mathbb{R}^{m \times d} \times H^2(\mathscr{F}_r; \mathbb{R}^m) \times H^2(\mathscr{F}_{\bar{r}}; \mathbb{R}^{m\times d}) \longrightarrow H^2(\mathscr{F}_s, \mathbb{R}^m)$, where $r, \bar{r} \in [s, T+K]$, and $f$ satisfies the following conditions:

(C3) There exists a constant $C >0$, such that for all $s \in [0,T], y_1, y_2, y'_1, y'_2 \in \mathbb{R}^m, z_1, z_2, z'_1, z'_2 \in \mathbb{R}^{m \times d}, \theta_{.,1}, \theta_{.,2}, \theta'_{.,1}, \theta'_{.,2} \in L^2_{\mathscr{F}}(s, T+K; \mathbb{R}^m), \gamma_{.,1}, \gamma_{.,2}, \gamma'_{.,1}, \gamma'_{.,2} \in L^2_{\mathscr{F}}(s, T+K; \mathbb{R}^{m \times d}), r, \bar{r} \in [s, T+K]$, we have  
\begin{align*}
& |f(t, y_1, z_1,\theta_{r,1}, \gamma_{\bar{r},1}, y'_1, z'_1, \theta'_{r,1}, \gamma'_{\bar{r},1})   
-  f(t, y_2, z_2,\theta_{r,2}, \gamma_{\bar{r},2}, y'_2, z'_2, \theta'_{r,2}, \gamma'_{\bar{r},2}) |     \nonumber \\
&\le C \[ |y_1 - y_2| + |z_1 - z_2| + \EE^{\mathscr{F}_s}\(|\theta_{r,1} - \theta_{r,2}| + |\gamma_{\bar{r},1} - \gamma_{\bar{r},2}|\)  \right.       \nonumber \\
&+ |y'_1 - y'_2| + |z'_1 - z'_2| + \EE^{\mathscr{F}_s}\( |\theta'_{r,1} - \theta'_{r,2}| + |\gamma'_{\bar{r},1} - \gamma'_{\bar{r},2}| \)    \left. \];
\end{align*}

(C4) $\EE[\int_0^T |f(s, 0,0,0,0,0,0,0,0)|^2 ds] <\infty.$
\begin{remark}
Note that $f(s, \cdot, \cdot,\cdot,\cdot,\cdot,\cdot,\cdot,\cdot)$ is $\mathscr{F}_s$-measurable ensures that the solution to the mean-field anticipated BSDE is $\mathscr{F}_s$-adapted.
\end{remark}

The following is the main result of this section: Two existence and uniqueness theorems for MFABSDEs and GMFDSDE, respectively.

\begin{theorem}\label{unique}
Suppose that $f$ satiesfies (C3) and (C4), and $\delta$, $\zeta$ satisfy (C1) and (C2). Then for any given terminal conditions $\xi_. \in S^2_{\mathscr{F}}(T, T+K; \mathbb{R}^m)$ and $\eta_. \in H^2_{\mathscr{F}}(T, T+K; \mathbb{R}^{m \times d})$, the mean field anticipated BSDE (\ref{eqc2}) has a unique solution, that is, there exists a unique pair of $\mathscr{F}_t$-adapted processes $(Y_., Z_.) \in S^2_{\mathscr{F}}(0, T+K; \mathbb{R}^m) \times  H^2_{\mathscr{F}}(0, T+K; \mathbb{R}^{m \times d})$ satisfying (\ref{eqc2}).
\end{theorem}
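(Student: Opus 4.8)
The plan is to prove Theorem \ref{unique} by a fixed-point argument on the space $S^2_{\mathscr{F}}(0,T+K;\mathbb{R}^m)\times H^2_{\mathscr{F}}(0,T+K;\mathbb{R}^{m\times d})$, using the weighted norm $\|(Y,Z)\|_\beta^2 = \EE\big[\int_0^{T+K}(|Y_s|^2+|Z_s|^2)e^{\beta s}\,ds\big]$ with $\beta>0$ to be chosen large. First I would set up the contraction map $\Gamma$: given $(U,V)\in S^2_{\mathscr{F}}(0,T+K)\times H^2_{\mathscr{F}}(0,T+K)$ agreeing with $(\xi_.,\eta_.)$ on $[T,T+K]$, define the ``frozen'' generator
\begin{align*}
g_0(s) := \EE'\big[f\big(s, U'_s, V'_s, U'_{s+\delta(s)}, V'_{s+\zeta(s)}, U_s, V_s, U_{s+\delta(s)}, V_{s+\zeta(s)}\big)\big], \quad s\in[0,T],
\end{align*}
which is $\mathscr{F}_s$-measurable by the remark following (C4), and then let $(Y,Z)=\Gamma(U,V)$ be the unique solution on $[0,T]$ of the standard (non-anticipated, non-mean-field) linear BSDE with driver $g_0$ and terminal value $\xi_T$, extended by $(\xi_.,\eta_.)$ on $[T,T+K]$. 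Existence and uniqueness of this inner BSDE, together with the a priori estimate \eqref{estimates1}--\eqref{eatimates2}, is exactly Lemma \ref{le3}; one must first check via (C3) and (C4) that $\EE\big[\big(\int_0^T |g_0(s)|\,ds\big)^2\big]<\infty$, so the inner problem is well-posed and $(Y,Z)\in S^2_{\mathscr{F}}(0,T)\times H^2_{\mathscr{F}}(0,T)$.

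Next I would estimate $\|\Gamma(U_1,V_1)-\Gamma(U_2,V_2)\|_\beta$. Writing $(\bar Y,\bar Z)$ and $(\bar U,\bar V)$ for the differences, apply the basic estimate \eqref{eatimates2} to the BSDE solved by $(\bar Y,\bar Z)$ with driver $\bar g_0(s)=g_0^{(1)}(s)-g_0^{(2)}(s)$ and zero terminal condition, giving
\begin{align*}
\EE\Big[\int_0^{T}\!\big(\tfrac{\beta}{2}|\bar Y_s|^2+|\bar Z_s|^2\big)e^{\beta s}\,ds\Big] \le \frac{2}{\beta}\,\EE\Big[\int_0^T |\bar g_0(s)|^2 e^{\beta s}\,ds\Big].
\end{align*}
By the Lipschitz condition (C3), Jensen's inequality for the conditional expectations $\EE^{\mathscr{F}_s}$, and the tower property, $|\bar g_0(s)|^2$ is bounded by a constant times $\EE^{\mathscr{F}_s}$ of $|\bar U_s|^2+|\bar V_s|^2+|\bar U_{s+\delta(s)}|^2+|\bar V_{s+\zeta(s)}|^2$ plus the corresponding primed terms; since $(U_i,V_i)$ have deterministic common values on $[T,T+K]$ the anticipating terms vanish there, and the primed variables contribute the same quantities after taking $\EE$. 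The crucial point is to handle $\int_0^T \EE[|\bar U_{s+\delta(s)}|^2]e^{\beta s}\,ds$: by (C2) and $e^{\beta s}\le e^{\beta(s+\delta(s))}$ (since $\delta\ge0$) this is bounded by $L\int_0^{T+K}\EE[|\bar U_s|^2]e^{\beta s}\,ds$, and similarly for the $\zeta$-term. Collecting everything yields
\begin{align*}
\|\Gamma(U_1,V_1)-\Gamma(U_2,V_2)\|_\beta^2 \le \frac{C'(1+L)}{\beta}\,\|(U_1,V_1)-(U_2,V_2)\|_\beta^2
\end{align*}
for a constant $C'$ depending only on the Lipschitz constant $C$, $m$, $d$; choosing $\beta$ large enough makes $\Gamma$ a strict contraction, hence it has a unique fixed point, which by construction is the unique solution of \eqref{eqc2}. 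One should also note $\sup$-norm control of $Y$ (hence membership in $S^2$) follows from \eqref{estimates1} and Doob/BDG as in Lemma \ref{le3}.

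The main obstacle I anticipate is the bookkeeping around the anticipating/mean-field coupling: one must be careful that the weighted estimate genuinely closes, i.e. that the factor multiplying $\|(U_1,V_1)-(U_2,V_2)\|_\beta^2$ really is $O(1/\beta)$ and not merely $O(1)$ — this works precisely because the time-change inequalities (C2) together with $\delta,\zeta\ge 0$ let us compare $e^{\beta s}$ on $[0,T]$ with $e^{\beta s}$ on $[0,T+K]$ without picking up a $\beta$-dependent constant. A secondary technical point is justifying that $g_0$ is jointly measurable and $\mathscr{F}_s$-adapted, so that Lemma \ref{le3} applies on each iteration; this is where the hypothesis that $f(s,\cdot)$ takes values in $H^2(\mathscr{F}_s;\mathbb{R}^m)$ and the Fubini-type identity in the Remark after Lemma \ref{le1} are used. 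The GMFDSDE part of the ``main result'' is analogous and simpler (forward equation, Picard iteration with the same weighted norm, using the Lipschitz and growth hypotheses on $b,\sigma$ together with the Wasserstein-to-$L^2$ bound $W_2(P_X,P_{X'})^2\le \EE[|X-X'|^2]$), so I would treat it briefly or in parallel.
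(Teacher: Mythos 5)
Your proposal is correct, and it reaches the fixed point by a genuinely different decomposition than the paper. The paper's map $h[(y,z)]=(Y,Z)$ freezes only the \emph{primed} (mean-field) arguments of $f$ and leaves $Y_s,Z_s,Y_{s+\delta(s)},Z_{s+\zeta(s)}$ as unknowns, so each iteration requires solving a genuine anticipated BSDE via the Peng--Yang existence theorem (Lemma \ref{le1}); the contraction is then established by applying It\^o's formula to $e^{\beta s}|\hat Y_s|^2$ and estimating eight cross terms one by one. You instead freeze \emph{all} arguments, so the inner problem is a plain BSDE with a known driver $g_0$, solvable by the elementary Lemma \ref{le3}, and you obtain the contraction directly from the a priori estimate \eqref{eatimates2} rather than by redoing the It\^o computation. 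Your route is more self-contained (it does not use the anticipated-BSDE existence theorem as a black box) and the bookkeeping is cleaner, since all delayed and mean-field increments land on the right-hand side in terms of $(\bar U,\bar V)$; the price is purely cosmetic. The key mechanism is identical in both arguments: condition (C2) combined with $\delta,\zeta\ge 0$ (so $e^{\beta s}\le e^{\beta(s+\delta(s))}$) converts the anticipated integrals into $L$ times the weighted norm over $[0,T+K]$ \emph{without} a $\beta$-dependent loss, which is exactly what makes the prefactor $O(1/\beta)$ and closes the estimate — this is the same step as the paper's bounds for its terms $(3)$, $(6)$, $(7)$, $(8)$. Two small remarks: the processes $\xi_\cdot,\eta_\cdot$ on $[T,T+K]$ are common to both inputs but not deterministic — what matters (and what you correctly use) is that the differences vanish there; and you correctly invoke the Banach contraction principle, whereas the paper's closing appeal to ``Schauder's fixed point theorem'' is a slip, since uniqueness requires the contraction property.
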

\begin{proof}
We first introduce a norm on the space $H^2_{\mathscr{F}}(0, T+K; \RR^{m}\times \RR^{m\times d})$ which is equivalent to the canonical norm:
\begin{align*}
\|v(\cdot)\|_{\beta} = \{\EE\int_0^{T+K} |v_s|^2 e^{\beta s } ds\}^{\frac{1}{2}}, \quad \beta >0 .
\end{align*}
The parameter $\beta$ will be specified later.

For any $(y,z) \in H^2_{\mathscr{F}}(0, T; \mathbb{R}^m \times \mathbb{R}^{m\times d})$, from Lemma \ref{le1}, there exists a unique solution $(Y, Z) \in S^2_{\mathscr{F}}(0, T+K; \mathbb{R}^m) \times  H^2_{\mathscr{F}}(0, T+K; \mathbb{R}^{m \times d})$ to the following anticipated BSDE:
\begin{align} \label{eq3}
Y_t = \xi &+ \int_t^T \EE^{'}\[f(s, y'_s, z'_s, y'_{s+\delta(s)}, z'_{s+\zeta(s)}, Y_s, Z_s, Y_{s+\delta(s)}, Z_{s+\zeta(s)})\]ds \nonumber  \\ 
 & -  \int_t^T Z_s dB_s,  \ \ \ \ \ \ \ \ \ \ \ \ \ \ \ \ \ \ \ \ \ \ \ \ \ \ \ \ \ \ \ \ \ \ \ \ \ \ \ \ \ \ \ \ \ \ \ \ \ \ t \in  [0, T]. 
\end{align}

Define a mapping $h:H^2_{\mathscr{F}}(0, T+K; \RR^{m}\times\RR^{m \times d}) \longrightarrow H^2_{\mathscr{F}}(0, T+K; \RR^{m}\times\RR^{m \times d})$ such that $h[(y_., z_.)] = (Y_., Z_.)$. Now we prove that $h$ is a contraction mapping under the norm $\| \cdot \|_{\beta}$. For two arbitrary elements $(y_{.,1} , z_{.,1})$ and $(y_{.,2} , z_{.,2})$ in $H^2_{\mathscr{F}}(0, T+K; \RR^{m}\times\RR^{m \times d})$, set $(Y_{.,1} , Z_{.,1}) = h[(y_{.,1} , z_{.,1})]$ and $(Y_{.,2} , Z_{.,2}) = h[(y_{.,2} , z_{.,2})]$. Denote their diferences by 
$$(\hat{y}_{.}, \hat{z}_{.}) = ((y_1 - y_2)_{.}, (z_1 - z_2)_{.}),   \quad (\hat{Y}_{.}, \hat{Z}_{.}) = ((Y_1 - Y_2)_{.}, (Z_1 - Z_2)_{.}).$$
Then, by appling Ito's formula to $e^{\beta s} |\hat{Y}_s|^2$ and by Fubini Theorem,  we get
\begin{align*}
& \EE \[\int_0^T \beta e^{\beta s} |Y_{s,1} - Y_{s,2}|^2 ds \] + \EE\[\int_0^T e^{\beta s}|Z_{s,1} - Z_{s,2}|^2 ds\]       \nonumber \\
=& \EE \int_0^T 2e^{\beta s}(Y_{s,1} - Y_{s,2}) \EE^{'}\[f\(s, y'_{1,s}, z'_{1,s}, y'_{1,s+\delta(s)}, z'_{1,s+\zeta(s)}, Y_{1,s}, Z_{1,s}, Y_{1,s+\delta(s)},   \right.  \right.    \nonumber\\
&Z_{1,s+\zeta(s)} \left. \) - f(s, y'_{2,s}, z'_{2,s}, y'_{2,s+\delta(s)}, z'_{2,s+\zeta(s)}, Y_{2,s}, Z_{2,s}, Y_{2,s+\delta(s)}, Z_{2,s+\zeta(s)})  \left. \]   ds  \nonumber \\
\le& C \EE \int_0^T 2e^{\beta s} |Y_{s,1} - Y_{s,2}|  \( |y'_{s,1} - y'_{s,2}| + |z'_{s,1} - z'_{s,2}|  + |Y_{s,1} - Y_{s,2}| \right.   \nonumber \\ 
& + |Z_{s,1} - Z_{s,2}|  +  \EE^{\mathscr{F}_s}|y'_{s+\delta(s),1} - y'_{s+\delta(s),2}| + \EE^{\mathscr{F}_s}|z'_{s+\zeta(s),1} - z'_{s+\zeta(s),2}|      \nonumber \\
& + \EE^{\mathscr{F}_s}|Y_{s+\delta(s),1} - Y_{s+\delta(s),2}| + \EE^{\mathscr{F}_s}|Z_{s+\zeta(s),1} - Z_{s+\zeta(s),2}|    \left. \)  ds   \nonumber \\
& \le  2C  \EE \int_0^{T}  e^{\beta s} |Y_{s,1} - Y_{s,2}||y_{s,1} - y_{s,2}| ds \nonumber \\
&+ 2C  \EE \int_0^{T}  e^{\beta s} |Y_{s,1} - Y_{s,2}| |z_{s,1} - z_{s,2}| ds  \nonumber \\
& + 2C  \EE \int_0^{T}  e^{\beta s} |Y_{s,1} - Y_{s,2}| |y_{s+\delta(s),1} - y_{s+\delta(s),2}| ds   \nonumber \\
&+ 2C \EE\int_0^{T} e^{\beta s}  |Y_{s,1} - Y_{s,2}| |Z_{s,1} - Z_{s,2}|ds+ 2C\EE \int_0^{T} e^{\beta s}  |Y_{s,1} - Y_{s,2}|^2 ds   \nonumber \\
&+ 2C  \EE \int_0^{T}  e^{\beta s} |Y_{s,1} - Y_{s,2}||z_{s+\zeta(s),1} - z_{s+\zeta(s),2}| ds   \nonumber \\
& +  2C\EE \int_0^{T} e^{\beta s}  |Y_{s,1} - Y_{s,2}|  |Y_{s+\delta(s),1} - Y_{s+\delta(s),2}|ds   \nonumber \\
&+ 2C\EE \int_0^{T} e^{\beta s}  |Y_{s,1} - Y_{s,2}| |Z_{s+\zeta(s),1} - Z_{s+\zeta(s),2}| ds    \nonumber \\
&= (1)+(2)+(3)+(4)+(5)+(6)+(7)+(8).
\end{align*}
Since
\begin{align*}
(1) &\le C \EE \int_0^{T+K} e^{\beta s} \( \frac{\beta}{8C} |Y_{s,1} -Y_{s,2}|^2 + \frac{8C}{\beta}|y_{s,1} - y_{s,2}|^2 \) ds   \nonumber \\
&= \frac{\beta}{8} \EE \int_0^{T+K} e^{\beta s}  |Y_{s,1} - Y_{s,2}|^2 ds + \frac{8C^2}{\beta} \EE \int_0^{T+K} e^{\beta s} |y_{s,1} - y_{s,2}|^2  ds,
\end{align*}
\begin{align*}
(2) &\le C \EE \int_0^{T+K} e^{\beta s} \( \frac{\beta}{8C} |Y_{s,1} -Y_{s,2}|^2 + \frac{8C}{\beta}|z_{s,1} - z_{s,2}|^2 \) ds   \nonumber \\
&= \frac{\beta}{8} \EE \int_0^{T+K} e^{\beta s}  |Y_{s,1} - Y_{s,2}|^2 ds + \frac{8C^2}{\beta} \EE \int_0^{T+K} e^{\beta s} |z_{s,1} - z_{s,2}|^2  ds,
\end{align*}
\begin{align*}
(3) &\le C \EE \int_0^{T+K} e^{\beta s} \( \frac{\beta}{8C} |Y_{s,1} -Y_{s,2}|^2 + \frac{8C}{\beta} L \cdot |y_{s,1} - y_{s,2}|^2 \) ds   \nonumber \\
&= \frac{\beta}{8} \EE \int_0^{T+K} e^{\beta s}  |Y_{s,1} - Y_{s,2}|^2 ds + \frac{8C^2 L}{\beta} \EE \int_0^{T+K} e^{\beta s} |y_{s,1} - y_{s,2}|^2  ds,
\end{align*}
\begin{align*}
(4) &\le C \EE \int_0^{T+K} e^{\beta s} \( 4C |Y_{s,1} -Y_{s,2}|^2 + \frac{1}{4C}|Z_{s,1} - Z_{s,2}|^2 \) ds   \nonumber \\
&= 4C^2 \EE \int_0^{T+K} e^{\beta s} |Y_{s,1} - Y_{s,2}|^2 ds + \frac{1}{4} \EE \int_0^{T+K} e^{\beta s} |Z_{s,1} - Z_{s,2}|^2  ds,
\end{align*}
\begin{align*}
(6) &\le C \EE \int_0^{T+K} e^{\beta s} \( \frac{\beta}{8C} |Y_{s,1} -Y_{s,2}|^2 + \frac{8CL}{\beta}|z_{s,1} - z_{s,2}|^2 \) ds   \nonumber \\
&= \frac{\beta}{8} \EE \int_0^{T+K} e^{\beta s}  |Y_{s,1} - Y_{s,2}|^2 ds + \frac{8C^2L}{\beta} \EE \int_0^{T+K} e^{\beta s} |z_{s,1} - z_{s,2}|^2  ds,
\end{align*}
\begin{align*}
(7)  & \le  C \EE \int_0^{T+K} e^{\beta s}  |Y_{s,1} - Y_{s,2}|^2 ds + CL \EE \int_0^{T+K} e^{\beta s} |Y_{s,1} - Y_{s,2}|^2  ds    \nonumber \\
& = C(1+L) \EE \int_0^{T+K} e^{\beta s}  |Y_{s,1} - Y_{s,2}|^2 ds,
\end{align*}
\begin{align*}
(8) &\le C \EE \int_0^{T} e^{\beta s} \( 4CL  |Y_{s,1} -Y_{s,2}|^2 + \frac{1}{4CL}|Z_{s +\zeta(s),1} - Z_{s +\zeta(s),2}|^2 \) ds   \nonumber \\
& \le  4C^2L \EE \int_0^{T+K} e^{\beta s}  |Y_{s,1} - Y_{s,2}|^2 ds + \frac{1}{4L} \EE \int_0^{T+K} e^{\beta s} L \cdot |Z_{s,1} - Z_{s,2}|^2  ds    \nonumber \\
& = 4C^2L \EE \int_0^{T+K} e^{\beta s}  |Y_{s,1} - Y_{s,2}|^2 ds + \frac{1}{4}  \EE \int_0^{T+K} e^{\beta s}  |Z_{s,1} - Z_{s,2}|^2 ds,
\end{align*}
consequently,
\begin{align*}
& \( \frac{\beta}{2}- 2C - 4C^2 - C(1+L) - 4C^2L \)\EE \[\int_0^{T+K} e^{\beta s}   |\hat{Y}_s|^2 ds \]  \nonumber \\
&\ \ \ \ \ + \frac{1}{2}\EE\[\int_0^{T+K} e^{\beta s}|\hat{Z}_s|^2 ds\]       \le \frac{8C^2(L+1)}{\beta} \EE \[\int_0^{T+K} e^{\beta s}  \( |\hat{y}_s|^2 + |\hat{z}_s|^2\) ds \].
\end{align*}
We choose $\beta = 32C^2L + 32C^2 + 6C + 2CL +1$, such that
\begin{align*}
& \EE \[\int_0^{T+K}\(  |\hat{Y}_s |^2 + |\hat{Z}_s |^2 \) e^{\beta s}  ds \]       
\le  \frac{1}{2} \EE \[\int_0^{T+K}   \( |\hat{y}_s|^2 + |\hat{z}_s|^2\) e^{\beta s} ds\].
\end{align*}
Thus, $h$ is a contraction, and hence the conclusions of the theorem follows from Schauder's fixed point theorem.
\end{proof}

In addition to the above existence and uniqueness thoerem for (\ref{eqc2}), we need to prove the same theorem for the following generalized mean-field delay stochastic differential equations(GMFDSDE):
\begin{eqnarray}\label{MFDSDE}
\begin{cases}
dX_t =b\(t, X_t, X_{t-\delta}, P_{X_t}, P_{X_{t-\delta}}\)dt +\sigma \(t, X_t, X_{t-\delta}, P_{X_t}, P_{X_{t-\delta}}\) dB_t  ,  \\ 
\ \ \ \ \ \ \ \ \ \ \ \ \ \ \ \ \ \ \ \ \ \ \ \ \ \ \ \ \ \ \ \ \ \ \ \ \ \ \ \ \ \ \ \ \ \ \ \ \ \ \ \ \ \ \ \ \ \ \ \ \ \ \ \ \ \ \ \ \ \ \ \ \ \ \ \   t \in [0,T];
\cr 
X_t = \xi_t, \ \ \ \ \ \ \ \ \ \ \ \ \ \ \ \ \ \ \ \ \ \ \ \ \ \ \ \ \ \ \ \ \ \ \ \ \ \ \ \ \ \ \ \ \ \ \ \ \ \ \ \ \ \ \ \ \ \ \ \ \ \ \ \ t \in  [-\delta, 0)
\end{cases}
\end{eqnarray}
which will be applied to the control problem. 

Assume that for all $t \in [-\delta, T]$, $b: [-\delta, T] \times \RR^m \times \RR^m \times \mathcal{P}_2(\RR^m)\times \mathcal{P}_2(\RR^m)\to H^2(-\delta, T; \RR^m)$, satisfies the following conditions:

(C5) There exists a constant $C \ge 0$, such that for all $t \in [-\delta, T]$, $x, x', x_{\delta}, x'_{\delta} \in \RR^{m}$, $\mu, \mu_{\delta}, \mu', \mu'_{\delta} \in  \mathcal{P}_2(\RR^m)$, we have 
\begin{align*}
& |b\(t, x, x_{\delta}, \mu, \mu_{\delta}\)- b\(t, x', x'_{\delta}, \mu', \mu'_{\delta}\)|  \\
& \le C\(|x - x'| + |x_{\delta} - x'_{\delta}| + W_2(\mu, \mu') + W_2(\mu_{\delta}, \mu'_{\delta}) \)
\end{align*}
$\sigma$ satisfies the same condition as $b$.

(C6) $\sup \limits_{t \ge -\delta} \(|b(t, 0, 0, 0, 0)| + |\sigma(t, 0, 0, 0, 0)|\) < \infty. $
 
\begin{theorem}\label{TMFDSDE}
Suppose that $b$ and $\sigma$ satiesfies (C5) and (C6), then for any given delay conditions $\xi_. \in H^2_{\mathscr{F}}(-\delta, 0; \mathbb{R}^m)$, the MFDSDE (\ref{MFDSDE})
has a unique strong solution.
\end{theorem}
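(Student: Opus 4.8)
The plan is to prove existence and uniqueness by a fixed-point argument on $[0,T]$, the values on $[-\delta,0)$ being already frozen to the datum $\xi_{\cdot}$. I would work in the closed affine subset $\mathcal{B}$ of $\mathscr{F}$-adapted processes $X=(X_t)_{t\in[-\delta,T]}$ with $X_t=\xi_t$ on $[-\delta,0)$, $X|_{[-\delta,0)}\in H^2_{\mathscr{F}}(-\delta,0;\RR^m)$ and $X|_{[0,T]}\in S^2_{\mathscr{F}}(0,T;\RR^m)$; this split of the solution space absorbs the fact that the datum is only $H^2$ (hence not continuous) while the part built by the SDE on $[0,T]$ is automatically continuous. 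Given $X\in\mathcal{B}$ define $\Gamma(X)_t=\xi_t$ for $t\in[-\delta,0)$ and, for $t\in[0,T]$,
\[
\Gamma(X)_t=\xi_0+\int_0^t b\bigl(s,X_s,X_{s-\delta},P_{X_s},P_{X_{s-\delta}}\bigr)\,ds+\int_0^t\sigma\bigl(s,X_s,X_{s-\delta},P_{X_s},P_{X_{s-\delta}}\bigr)\,dB_s.
\]
First I would check $\Gamma$ maps $\mathcal{B}$ into itself: (C5) and (C6) give the linear growth bound $|b(s,x,x_\delta,\mu,\mu_\delta)|+|\sigma(s,x,x_\delta,\mu,\mu_\delta)|\le C\bigl(1+|x|+|x_\delta|+W_2(\mu,\delta_0)+W_2(\mu_\delta,\delta_0)\bigr)$, and since $W_2(P_{X_s},\delta_0)^2=\EE|X_s|^2$, Hölder's inequality and the Burkholder--Davis--Gundy inequality yield $\EE\sup_{t\le T}|\Gamma(X)_t|^2<\infty$ (using $\EE\int_{-\delta}^0|\xi_u|^2\,du<\infty$ to control the delayed argument on $[0,\delta]$). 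Adaptedness of the composed coefficients is immediate, since $s\mapsto P_{X_s}$ is deterministic and $X$ is adapted.

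Next comes the contraction estimate. For $X,\bar X\in\mathcal{B}$, writing $\hat X=X-\bar X$ (so $\hat X\equiv0$ on $[-\delta,0)$), I would apply BDG and Hölder to $\Gamma(X)-\Gamma(\bar X)$ and then (C5), together with the coupling bound $W_2(P_{X_s},P_{\bar X_s})^2\le\EE|\hat X_s|^2$, to obtain
\[
\EE\sup_{r\le t}\bigl|\Gamma(X)_r-\Gamma(\bar X)_r\bigr|^2\le C_T\int_0^t\Bigl(\EE|\hat X_s|^2+\EE|\hat X_{s-\delta}|^2\Bigr)\,ds.
\]
The delay term is handled by the substitution $u=s-\delta$: since $\hat X$ vanishes on $[-\delta,0)$, $\int_0^t\EE|\hat X_{s-\delta}|^2\,ds=\int_{-\delta}^{t-\delta}\EE|\hat X_u|^2\,du\le\int_0^t\EE\sup_{r\le u}|\hat X_r|^2\,du$, hence $\EE\sup_{r\le t}|\Gamma(X)_r-\Gamma(\bar X)_r|^2\le 2C_T\int_0^t\EE\sup_{r\le s}|\hat X_r|^2\,ds$. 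Iterating this inequality gives $\|\Gamma^nX-\Gamma^n\bar X\|^2_{S^2(0,T)}\le\frac{(2C_T T)^n}{n!}\|\hat X\|^2_{S^2(0,T)}$; equivalently one may pass to the equivalent weighted norm $\|X\|_\beta^2=\sup_{t\in[0,T]}e^{-\beta t}\,\EE\sup_{r\le t}|X_r|^2$ and get $\|\Gamma X-\Gamma\bar X\|_\beta\le(2C_T/\beta)^{1/2}\|\hat X\|_\beta$, a genuine contraction once $\beta>2C_T$. Either way a suitable power of $\Gamma$ is a strict contraction, so $\Gamma$ has a unique fixed point in $\mathcal{B}$, which is precisely the unique strong solution of (\ref{MFDSDE}); uniqueness in the full class follows from the same estimate and Gronwall's lemma.

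I expect the routine part to be the growth/BDG bookkeeping; the only genuinely delicate point is the interplay of the time-delay with the law dependence. One must make sure the Wasserstein distances $W_2(P_{X_s},P_{\bar X_s})$ and $W_2(P_{X_{s-\delta}},P_{\bar X_{s-\delta}})$ are estimated through the \emph{same} coupling $(X,\bar X)$ carried on $(\Omega,\mathscr{F},P)$, so that $W_2(P_{X_s},P_{\bar X_s})^2\le\EE|X_s-\bar X_s|^2$ is legitimate, and that after the shift $s\mapsto s-\delta$ the contribution from $[-\delta,0)$ genuinely drops out because both solutions share the datum $\xi$. An alternative, equally viable route is to solve (\ref{MFDSDE}) slab by slab on $[0,\delta],[\delta,2\delta],\dots$: on $[(k-1)\delta\wedge T,\,k\delta\wedge T]$ the delayed arguments $X_{s-\delta}$ and their law $P_{X_{s-\delta}}$ are already determined from the previous slab, so there the equation is a McKean--Vlasov SDE with a known, $H^2$-integrable inhomogeneity, to which the existence-and-uniqueness theory recalled in Section~2 (Buckdahn et al. \cite{BLP2014}) applies directly; patching the finitely many slabs then produces the global strong solution.
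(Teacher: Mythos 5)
Your proposal is correct and follows essentially the same route as the paper: freeze the arguments of the coefficients, and show the resulting map is a strict contraction on the process space over the whole interval $[-\delta,T]$ (the paper works with the weighted norm $\|\nu(\cdot)\|_{\beta}$ and It\^o's formula applied to $e^{-\beta t}|\bar{\Phi}_t|^2$, which is precisely the weighted-norm variant you describe, rather than BDG plus Picard iteration in the $S^2$ sup-norm). If anything, your write-up is more explicit on the two points the paper's displayed estimate uses tacitly, namely the coupling bound $W_2(P_{X_s},P_{\bar{X}_s})^2\le \EE|X_s-\bar{X}_s|^2$ and the change of variables $u=s-\delta$ showing that the delayed difference term vanishes on $[-\delta,0)$ and is absorbed into the undelayed one.
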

\begin{proof}
For any $\beta \ge 0$, we introduce  a norm in the Banach space $H^2_{\mathscr{F}}(-\delta, T; \RR^{m})$:
$$
\| \nu(\cdot) \|_{\beta} = \( \EE \[\int_{-\delta}^T e^{- \beta t} | \nu _s |^2 dt \] \)^2
$$
Clearly, it is equivalent to the original norm. We use this norm to construct a contraction mapping that allow us to apply the fixed point Theorem. Set 
\begin{eqnarray}
\begin{cases}
X_t = \xi_0 + \int_0^t b\(s, X_s, X_{s-\delta}, P_{X_s}, P_{X_{s-\delta}}\)ds + \int_0^s \sigma \(s, X_s, X_{s-\delta}, P_{X_s}, P_{X_{s-\delta}}\) dB_s  , \nonumber \\  
\ \ \ \ \ \ \ \ \ \ \ \ \ \ \ \ \ \ \ \ \ \ \ \ \ \ \ \ \ \ \ \ \ \ \ \ \ \ \ \ \ \ \ \ \ \ \ \ \ \ \ \ \ \ \ \ \ \ \ \ \ \ \ \ \ \ \ \ \ \ \ \ \ \ \ \ \ \ \ t \in [0,T];
\cr 
X_t = \xi_t,  \ \ \ \ \ \ \ \ \ \ \ \ \ \ \ \ \ \ \ \ \ \ \ \ \ \ \ \ \ \ \ \ \ \ \ \ \ \ \ \ \ \ \ \ \ \ \ \ \ \ \ \ \ \ \ \ \ \ \ \ \ \ \ \ \ \ t \in  [-\delta, 0). \nonumber
\end{cases}
\end{eqnarray}
Given $x \in H^2_{\mathscr{F}}(-\delta, T; \RR^{m})$, we define
\begin{eqnarray}
\begin{cases}
X_t = \xi_0 + \int_0^t b\(s, x_s, x_{s-\delta}, P_{x_s}, P_{x_{s-\delta}}\)ds + \int_0^s \sigma \(s, x_s, x_{s-\delta}, P_{x_s}, P_{x_{s-\delta}}\) dB_s  , \nonumber \\  
\ \ \ \ \ \ \ \ \ \ \ \ \ \ \ \ \ \ \ \ \ \ \ \ \ \ \ \ \ \ \ \ \ \ \ \ \ \ \ \ \ \ \ \ \ \ \ \ \ \ \ \ \ \ \ \ \ \ \ \ \ \ \ \ \ \ \ \ \ \ \ \ \ \ \ \ \ \ \ t \in [0,T];
\cr 
X_t = \xi_t,  \ \ \ \ \ \ \ \ \ \ \ \ \ \ \ \ \ \ \ \ \ \ \ \ \ \ \ \ \ \ \ \ \ \ \ \ \ \ \ \ \ \ \ \ \ \ \ \ \ \ \ \ \ \ \ \ \ \ \ \ \ \ \ \ \ \ t \in  [-\delta, 0). \nonumber
\end{cases}
\end{eqnarray}
Then, $X \in H^2_{\mathscr{F}}(-\delta, T; \RR^{m})$. Denote $X = \Phi (x.)$, now we prove that $\Phi$ is a contraction mapping under the norm $\| \cdot \|_{\beta}$. For two arbitrary elements $x.$ and $x'.$ in $H^2_{\mathscr{F}}(-\delta, T; \RR^{m})$, set $x. = \Phi(x.)$, $x'. = \Phi(x'.)$. Denote their differences by $\bar{x} = x - x'$, $\bar{\Phi} = \Phi(x) -\Phi(x')$, $\bar{b} = b(t, x_t, x_{t-\delta}, P_{x_t}, P_{x_{t-\delta}}) - b(t, x'_t, x'_{t-\delta}, P_{x'_t}, P_{x'_{t-\delta}})$, $\bar{\sigma} = \sigma(t, x_t, x_{t-\delta}, P_{x_t}, P_{x_{t-\delta}}) - \sigma(t, x'_t, x'_{t-\delta}, P_{x'_t}, P_{x'_{t-\delta}})$ .

By applying Ito's formula to $e^{\beta t} \bar{\Phi}_t^2$, we have
\begin{align*}
&e^{\beta t} \bar{\Phi}_t^2 + \beta \int_0^T e^{-\beta t} \bar{\Phi}_t^2 dt \\
=& 2\int_0^T e^{-\beta t} \langle \bar{\Phi}_t, \bar{b} \rangle dt + 2\int_0^T e^{-\beta t} \langle \bar{\Phi}_t, \bar{\sigma} \rangle dB_t + \int_0^T e^{-\beta t} tr \langle \bar{\sigma}_t, \bar{\sigma}^* \rangle dt
\end{align*}
Take expectation on both sides, then 
\begin{align*}
 \beta \EE \int_0^T e^{-\beta t} \bar{\Phi}_t^2 dt = 2 \EE \int_0^T e^{-\beta t} \langle \bar{\Phi}_t, \bar{b} \rangle dt + \EE \int_0^T e^{-\beta t} tr \langle \bar{\sigma}_t, \bar{\sigma}^* \rangle dt,
\end{align*}
by Cauchy-Schwartz enequality and condition (C5), 
\begin{align*}
 \beta \EE \int_0^T e^{-\beta t} \bar{\Phi}_t^2 dt \le \EE \int_0^T e^{-\beta t} \bar{\Phi}_t^2 dt + 2 C^2 \EE \int_0^T e^{-\beta t} |\bar{x}_t|^2 dt ,
\end{align*}
we choose $\beta = 1+ 4C^2$, such that 
\begin{align*}
\EE \int_0^T e^{-\beta t} \bar{\Phi}_t^2 dt \le \frac{1}{2} \EE \int_0^T e^{-\beta t} |\bar{x}_t|^2 dt.
\end{align*}
Consequently, $\Phi$ is a strict contraction mapping, which complete the proof.
\end{proof}

\subsection{Comparison theorem for MFABSDEs}
Notice that the conditions on the driver $f$ which is needed for the comparison theorem for mean-field BSDEs and for the anticipated BSDEs are stronger than those needed for the existence and uniqueness theorem. 

Let $\bar{f}_i = (t, y, z, y', z'), i = 1, 2,$ be two drivers of mean-field BSDEs, to derive the comparison principle for mean-field BSDEs, restrictions are forced on $\bar{f}_i, i = 1, 2,$ in \cite{BLP2009} as following:
\begin{enumerate}[(i)]
\setcounter{enumi} {0}
\item  One of the two coefficients is independent of $z'$,

\item One of the two coefficients is nondecreasing in $y'$.
\end{enumerate}

On the other hand, two example in \cite{PY} also given to demonstrate the comparison principle for the anticipated BSDEs (\ref{eqq4}). Let $\hat{f}_i = (t, y, z, \theta, \gamma), i = 1, 2,$ be two drivers of (\ref{eqq4}), if
\begin{enumerate}[(i)]
\setcounter{enumi} {2}
\item  $\hat{f}_2$ is increasing in the anticipated term of $Y.$

\item $\hat{f}_2$ indipendent of the anticipated term of $Z.$
\end{enumerate}
then the comparison theorem holds for anticipated BSDEs.

Now we discuss the comparison principle for mean-filed anticipated BSDEs (\ref{eqc2}), it is naturally to combine all the restrictions above both on $\bar{f}$ and $\hat{f}$. In addition, we force the other two restrictions on $f$:
\begin{enumerate}[(i)]
\setcounter{enumi} {4}
\item  One of the two coefficients ($f_1$ or $f_2$) is independent of the anticipated term of $z'$,

\item $f_2$ is non-decreasing in the anticipated term of $Y'.$
\end{enumerate}

Counterexample are given to show that if the driver $f$ of mean-field anticipated BSDEs depends on the anticipated term of $z'$ we can't get the comparison theorem.
\begin{example}
For $d = 1$ we consider the mean-field BSDE(\ref{eqc2}) with time horizon $T = 1$, for all $t \in [0, T]$, with driver $f(t, y'_t, z'_t, y'_{t+\delta(t)}, z'_{t+\zeta(t)}, y_t, z_t, y_{t+\delta(t)},$\\$ z_{t+\zeta(t)}) = - z'_{t+\zeta(t)}$ and two different terminal values $\xi_1, \xi_2 \in L^2(\Omega, \mathcal{F}_T, P)$. Let us denotes the associated solutions by $(Y^1, Z^1)$ and $(Y^2, Z^2)$, respectively. Then,
\begin{eqnarray} \label{eqEoCT}
Y_t^i = \xi_i + \int_t^1 \EE[- Z_{s + \zeta(s)}^i]ds - \int_t^1 Z_s^i dW_s, \quad 0 \le t \le 1, i = 1, 2.
\end{eqnarray}
Let $\xi_1 = - (B_1^{+})^3$, then by the Clark-Ocone formula we have:
\begin{eqnarray}
- (B_1^{+})^3 = \xi_1 &=& \EE(\xi) + \int_0^1 \EE (D_s \xi | \mathscr{F}_s) dW_s  \nonumber \\
&=& \EE(\xi) + \int_0^1 Z_s dW_s = -\frac{2}{\sqrt{2 \pi}} - \int_0^1 3(B_1^{+})^2 dW_s. \nonumber 
\end{eqnarray}
In addition, 
\begin{eqnarray}
Y_0^1 &=& - (B_1^+)^3 - \int_0^1 \( \EE(D_{s+\zeta(s)}\xi)1_{s + \zeta(s)\le 1} + \EE(D_1 \xi)1_{s + \zeta(s) >1} \) ds - \int_0^1 Z_sdW_s \nonumber \\
&=& -\frac{2}{\sqrt{2 \pi}} + \int_0^1 3 \EE (B_1^+)^2 ds = -\frac{2}{\sqrt{2 \pi}} + \frac{3}{2} >0. \nonumber
\end{eqnarray}
Let now $\xi_2 = 0$. Then $(Y^2, Z^2) = (0, 0)$ is also a solution of (\ref{eqEoCT}). Hence, we have $Y_0^1 > Y_0^2$ although $\xi_1 \le \xi_2$.
\end{example}

Without loss of generality, we assume that all the restrictions (i) - (vi) are satisfied by $f_2$.
Let $(Y.^{(1)}, Z.^{(1)})$, $(Y.^{(2)}, Z.^{(2)})$ be respectively solutions of the following two mean-filed anticipated BSDEs:
\begin{eqnarray}
\begin{cases}
Y_t^{(1)} =\xi_T^{(1)} + \int_t^T  \EE^{'}[f_1(s,  Y^{(1)}_s, Z^{(1)}_s, Y^{(1)}_{s+\delta(s)}, Z^{(1)}_{s+\zeta(s)}, Y'^{,(1)}_s, Z'^{,(1)}_s,Y'^{,(1)}_{s+\delta(s)}, Z'^{,(1)}_{s+\zeta(s)})]ds \nonumber \\
\ \ \ \ \ \ \ \ \ \ \ \ \ \ \  - \int_t^T Z^{(1)}_sdB_s  ,  \ \ \ \ \ \ \ \ \ \ \ \ \ \ \ \ \ \ \ \ \ \ \ \ \ \ \ \ \ \ \ \ \ \ \ \ \ \ \ \ \ \ \ \ \ \ \ \ \ \ \ \  t \in [0,T];
\cr 
Y_t^{(1)} = \xi_t^{(1)},  Z_t^{(1)} = \eta_t^{(1)} ,  \ \ \ \ \ \ \ \ \ \ \ \ \ \ \ \ \ \ \ \ \ \ \ \ \ \ \ \ \ \ \ \ \ \ \ \ \ \ \ \ \ \ \ \ \ \ \ \   t \in  (T, T+K];
\end{cases}
\end{eqnarray}
\begin{eqnarray}
\begin{cases}
Y_t^{(2)} =\xi_T^{(2)} + \int_t^T  \EE^{'}[f_2(s, Y^{(2)}_s, Z^{(2)}_s, Y^{(2)}_{s+\delta(s)},Y'^{,(2)}_s, Y'^{,(2)}_{s+\delta(s)})]ds - \int_t^T Z^{(2)}_sdB_s,   \\   
\ \ \ \ \ \ \ \ \ \ \ \ \ \ \ \ \ \ \ \ \ \ \ \ \ \ \ \ \ \ \ \ \ \ \ \ \ \ \ \ \ \ \ \ \ \ \ \ \ \ \ \ \ \ \ \ \ \ \ \ \ \ \ \ \ \ \ \ \ \ \ \ \ \ \ \ \ \ \ \ \ \ \ \ \ \ t \in [0,T];
\cr 
Y^{(2)}_t = \xi_t^{(2)}, \ \ \ \ \ \ \ \ \ \ \ \ \ \ \ \ \ \ \ \ \ \ \ \ \ \ \ \ \ \ \ \ \ \ \ \ \ \ \ \ \ \ \ \ \ \ \ \ \ \ \ \ \ \ \ \ \ \ \ \ \ \ \ t \in  (T, T+K]; \nonumber
\end{cases}
\end{eqnarray}

\begin{theorem}
Assume that $f_2$ the above restrictions (i)-(vi), $\xi_{.}^{(1)}$, $\xi_{.}^{(2)} \in \mathcal{S}^2_{\mathscr{F}}(T, T+K)$, $\delta$, $\zeta$ satisfies (C1), (C2), and for all $t \in [0, T]$, $y \in \mathbb{R}^m$, $z \in \mathbb{R}^{m \times d}$, $f_2(t, y, z, \cdot, y', \cdot)$ is increasing, that is, $f_2(t, y, z, \theta_r, y', \theta'_r) \ge f_2(t, y, z, \tilde{\theta}_r, y', \tilde{\theta}'_r)$, if $\theta_r \ge \tilde{\theta}_r$ and $\theta'_r \ge \tilde{\theta}'_r$, $\theta_r, \theta'_r, \tilde{\theta}_r, \tilde{\theta}'_r \in H^2_{\mathscr{F}}(t, T+K), r \in [t, T+K]$. If $\xi_s^{(1)} \ge \xi_s^{(2)}, s \in [T, T+K]$ and $f_1(t, y, z, \theta_r, \gamma_{\bar{r}}, y', z', \theta'_r, \gamma'_{\bar{r}}) \ge f_2(t, y, z, \theta_r, y', \theta'_r), r, \bar{r} \in [t, T+K]$, then $$Y_t^{(1)} \ge Y_t^{(2)}, \quad \quad a.e., a.s.$$
\end{theorem}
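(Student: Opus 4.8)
The plan is to reduce the assertion to a one-sided estimate for the difference process and then run the weighted $L^2$-computation on its negative part, in the style of the proof of Theorem~\ref{unique}. By Theorem~\ref{unique} both pairs $(Y^{(1)},Z^{(1)})$ and $(Y^{(2)},Z^{(2)})$ exist in $S^2_{\mathscr F}\times H^2_{\mathscr F}$, so write $\hat Y_t := Y_t^{(1)} - Y_t^{(2)}$ and $\hat Z_t := Z_t^{(1)} - Z_t^{(2)}$. Subtracting the two equations, $(\hat Y_.,\hat Z_.)$ is a continuous semimartingale satisfying on $[0,T]$ a backward equation whose generator I would split as $\EE'[f_1(\Theta_t^{(1)})] - \EE'[f_2(\Theta_t^{(2)})] = g_t + \Delta_t$, where $\Theta_t^{(i)}$ abbreviates the full list of arguments of $f_2$ built from $(Y^{(i)},Z^{(i)})$, its independent copy and the relevant anticipated values, $g_t := \EE'[f_1(\Theta_t^{(1)}) - f_2(\Theta_t^{(1)})]$ and $\Delta_t := \EE'[f_2(\Theta_t^{(1)}) - f_2(\Theta_t^{(2)})]$. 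The standing pointwise inequality $f_1 \ge f_2$ (which holds for all values of the extra $z$-arguments carried by $f_1$ but not by $f_2$) gives $g_t \ge 0$; on $(T,T+K]$ the boundary datum is $\hat Y_t = \xi_t^{(1)} - \xi_t^{(2)} \ge 0$, so $\hat Y_T \ge 0$ and $(\hat Y_u)^- = 0$ for $u \in (T,T+K]$.

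Next I would control $\Delta_t$ by changing its arguments one at a time. For the two non-monotone slots ($Y_t$ and $Z_t$) the Lipschitz continuity of $f_2$ gives increments bounded by $C|\hat Y_t|$ and $C|\hat Z_t|$. For the anticipated $Y$-slot, monotonicity~(iii) together with the Lipschitz bound yields $-2(\hat Y_t)^-\,(\text{the corresponding increment}) \le 2C(\hat Y_t)^-(\hat Y_{t+\delta(t)})^-$ (using that $f_2$ is nondecreasing there and that $-y \le y^-$; here the comparison is read in the scalar sense, $m=1$, or componentwise). For the two primed slots, monotonicity~(ii) and~(vi) produce increments of the form $\EE'[\lambda_t (\hat Y_t)']$ and $\EE'[\rho_t (\hat Y_{t+\delta(t)})']$ with $0 \le \lambda_t,\rho_t \le C$; since an independent copy preserves the law and commutes with $x \mapsto x^-$, these are bounded below by $-C\,\EE[(\hat Y_t)^-]$ and $-C\,\EE[(\hat Y_{t+\delta(t)})^-]$. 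Restrictions (i), (iv) and (v) guarantee that $f_2$ involves none of $z'$, the anticipated $Z$, nor the anticipated $Z'$, so no further terms appear.

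Finally I would apply It\^o's formula to $e^{\beta s}\big((\hat Y_s)^-\big)^2$ on $[0,T]$, noting that $x \mapsto (x^-)^2$ is $C^1$ with bounded, atom-free second derivative so that no local-time term arises, and take expectations: the stochastic integral drops out because $(\hat Y,\hat Z) \in S^2_{\mathscr F}\times H^2_{\mathscr F}$, the terminal term vanishes since $(\hat Y_T)^- = 0$, and the $g_s$-term contributes with a favourable sign. Combining the bounds above with Young's inequality on the $\hat Z_s$-term (its $\mathbf{1}_{\hat Y_s<0}|\hat Z_s|^2$ part being absorbed on the left), Jensen's inequality on the two mean-field terms, and then (C2) applied to $u \mapsto e^{\beta u}\big((\hat Y_u)^-\big)^2$ — legitimate since $\delta \ge 0$ gives $e^{\beta s} \le e^{\beta(s+\delta(s))}$ and since this function vanishes on $(T,T+K]$ — one is led to
\begin{align*}
\EE\big[\big((\hat Y_0)^-\big)^2\big] + \big(\beta - C_1 - L C_2\big)\,\EE\!\int_0^T e^{\beta s}\big((\hat Y_s)^-\big)^2\,ds \le 0
\end{align*}
for constants $C_1,C_2$ depending only on the Lipschitz constant $C$. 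Choosing $\beta > C_1 + LC_2$ forces $\EE\int_0^T e^{\beta s}\big((\hat Y_s)^-\big)^2\,ds = 0$, hence $(\hat Y_s)^- = 0$ for a.e.\ $s \in [0,T]$, $P$-a.s., i.e.\ $Y_t^{(1)} \ge Y_t^{(2)}$ a.e., a.s.; together with the boundary data this is the claim.

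I expect the main difficulty to lie in the mean-field slots: extracting the correct one-sided bound from the monotonicity hypotheses (ii) and (vi) and, in particular, passing from $\EE'[\lambda_t (\hat Y_t)']$ to a constant multiple of $\EE[(\hat Y_t)^-]$ through the law-preserving copy, so that this term can be reinjected into the Gronwall-type inequality after a Jensen step. A secondary delicate point is the bookkeeping that makes (C2) applicable to the anticipated negative part in the presence of the exponential weight, but this parallels the computation already performed in the proof of Theorem~\ref{unique}.
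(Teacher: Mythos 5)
Your argument is sound, but it is not the route the paper takes, so a comparison is in order. The paper reduces the anticipated comparison to the already-known comparison theorem for (non-anticipated) mean-field BSDEs (Lemma \ref{le2}): it freezes the anticipated slots of $f_2$ at $Y^{(1)}_{\cdot+\delta(\cdot)}$ to build an auxiliary solution $Y^{(3)}$ with $Y^{(1)}\ge Y^{(3)}$, then uses the monotonicity hypotheses to produce a decreasing chain $Y^{(3)}\ge Y^{(4)}\ge\cdots\ge Y^{(n)}\ge\cdots$ by successively re-freezing, and finally shows via the a priori estimate (\ref{eatimates2}) that this chain is Cauchy in the $\beta$-norm and converges to the solution with driver $f_2$, which equals $Y^{(2)}$ by the uniqueness in Theorem \ref{unique}. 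You instead run a one-shot It\^o--Tanaka estimate on $e^{\beta s}\bigl((\hat Y_s)^-\bigr)^2$ for the difference $\hat Y=Y^{(1)}-Y^{(2)}$, converting each hypothesis into a one-sided slope bound: Lipschitz continuity for the $(y,z)$ slots, monotonicity plus Lipschitz for the anticipated and primed slots, and law-preservation of the independent copy to turn $\EE'[\lambda_t(\hat Y_t)']$ into $-C\,\EE[(\hat Y_t)^-]$ before a Jensen step; restrictions (i), (iv), (v) guarantee no uncontrollable $z'$- or anticipated-$z$-terms appear, and (C2) together with the vanishing of $(\hat Y)^-$ on $[T,T+K]$ closes the Gronwall loop. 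Both proofs consume the hypotheses (i)--(vi) in essentially the same places. What the paper's scheme buys is modularity -- Lemma \ref{le2} is used as a black box and only the classical It\^o formula is needed -- at the price of constructing and passing to the limit of an infinite auxiliary sequence. What your scheme buys is a self-contained, single-inequality proof that never invokes Lemma \ref{le2} or a convergence argument, at the price of the generalized It\^o formula for the $C^1$ function $x\mapsto(x^-)^2$ and the more delicate bookkeeping of one-sided bounds in the mean-field and anticipated slots (in particular, your bound for the anticipated $Y$-slot should be stated with the conditional expectation $\EE^{\mathscr F_t}\bigl[(\hat Y_{t+\delta(t)})^-\bigr]$, as dictated by (C3), though this washes out after taking expectations). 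Note also that, as you observe, the comparison statement implicitly requires $m=1$; the paper's proof has the same implicit restriction since it relies on Lemma \ref{le2}.
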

\begin{proof}
Set
\begin{eqnarray}
\begin{cases}
Y_t^{(3)} =\xi_T^{(2)} + \int_t^T  \EE^{'}[f_2(s, Y^{(3)}_s, Z^{(3)}_s, Y^{(1)}_{s+\delta(s)},Y'^{,(3)}_s, Y'^{,(1)}_{s+\delta(s)})]ds - \int_t^T Z^{(3)}_sdB_s  , \\
\ \ \ \ \ \ \ \ \ \ \ \ \ \ \ \ \ \ \ \ \ \ \ \ \ \ \ \ \ \ \ \ \ \ \ \ \ \ \ \ \ \ \ \ \ \ \ \ \ \ \ \ \ \ \ \ \ \ \ \ \ \ \ \ \ \ \ \ \ \ \ \ \ \ \ \ \ \ \ \ \ \ \ \ \ t \in [0,T];
\cr 
Y_t^{(3)} = \xi_t^{(2)}, \ \ \ \ \ \ \ \ \ \ \ \ \ \ \ \ \ \ \ \ \ \ \ \ \ \ \ \ \ \ \ \ \ \ \ \ \ \ \ \ \ \ \ \ \ \ \ \ \ \ \ \ \ \ \ \ \ \ \ \ \ \ \ t \in  (T, T+K].\nonumber
\end{cases}
\end{eqnarray}
By Lemma \ref{le1}, we know that there exists a unique pair of $\mathscr{F}_t$-adapted processes $(Y.^{(3)}, Z.^{(3)}) \in S_{\mathscr{F}}^2(0, T+K, \RR^m) \times H_{\mathscr{F}}^2(0, T; \mathbb{R}^{m \times d})$ that satisfies the above BSDE. Since $f_1 \ge f_2$, $y \in \mathbb{R}^m, z \in \mathbb{R}^{m \times d}$, by Lemma \ref{le2}, we obtain 
\begin{align*}
Y_t^{(1)} \ge Y_t^{(3)}, \quad a.s.
\end{align*}
Set
\begin{eqnarray}
\begin{cases}
Y_t^{(4)} =\xi_T^{(2)} + \int_t^T  \EE^{'}[f_2(s, Y^{(4)}_s, Z^{(4)}_s, Y^{(3)}_{s+\delta(s)},Y'^{,(4)}_s, Y'^{,(3)}_{s+\delta(s)})]ds - \int_t^T Z^{(4)}_sdB_s  , \\
\ \ \ \ \ \ \ \ \ \ \ \ \ \ \ \ \ \ \ \ \ \ \ \ \ \ \ \ \ \ \ \ \ \ \ \ \ \ \ \ \ \ \ \ \ \ \ \ \ \ \ \ \ \ \ \ \ \ \ \ \ \ \ \ \ \ \ \ \ \ \ \ \ \ \ \ \ \ \ \ \ \ \ \ \ t \in [0,T];
\cr 
Y_t^{(4)} = \xi_t^{(2)}, \ \ \ \ \ \ \ \ \ \ \ \ \ \ \ \ \ \ \ \ \ \ \ \ \ \ \ \ \ \ \ \ \ \ \ \ \ \ \ \ \ \ \ \ \ \ \ \ \ \ \ \ \ \ \ \ \ \ \ \ \ \ \ t \in  (T, T+K].\nonumber
\end{cases}
\end{eqnarray}
Since for all $t \in [0, T], y \in \mathbb{R}^m, z \in \mathbb{R}^{m \times d}$, $f_2(t, Y_t, Z_t, \cdot, Y'_t, Z'_s, Y'_{s+\delta(s)}, Z'_{t+\zeta(t)})$ is increasing and $Y_t^{(1)} \ge Y_t^{(3)}$, by Lemma \ref{le2}, we know 
\begin{align*}
Y_t^{(3)} \ge Y_t^{(4)}, \quad a.s.
\end{align*}
For $n = 5, 6, \cdot\cdot\cdot,$ we consider the following mean-field BSDE:
\begin{eqnarray}
\begin{cases}
Y_t^{(n)} =\xi_T^{(2)} + \int_t^T  \EE^{'}[f_2(s, Y^{(n)}_s, Z^{(n)}_s, Y^{(n-1)}_{s+\delta(s)},Y'^{,(n)}_s, Y'^{,(n-1)}_{s+\delta(s)})]ds\\
 \ \ \ \ \ \ \ \ \ \ \ \ \ \ \  - \int_t^T Z^{(n)}_sdB_s  , \ \ \ \ \ \ \ \ \ \ \ \ \ \ \ \ \ \ \ \ \ \ \ \ \ \ \ \ \ \ \ \ \ \   t \in [0,T];
\cr 
Y_t^{(n)} = \xi_t^{(2)}, \ \ \ \ \ \ \ \ \ \ \ \ \ \ \ \ \ \ \ \ \ \ \ \ \ \ \ \ \ \ \ \ \ \ \ \ \ \ \ \ \ \ \ \ \ t \in  (T, T+K]. \nonumber
\end{cases}
\end{eqnarray}
Similarly, we have $Y_t^{(4)} \ge Y_t^{(5)} \ge \cdot\cdot\cdot \ge Y_t^{(n)} \ge \cdot\cdot\cdot,$ a.s. We use $\|\nu(\cdot)\|_{\beta}$ in the proof of Theorem \ref{unique} as the norm in the Banach space $H^2_{\mathscr{F}}(0, T+K; \RR^m) \times H^2_{\mathscr{F}}(0, T; \RR^{m\times d})$. Set $\hat{Y}_s^{(n)}= Y_s^{(n)} - Y_s^{(n-1)}$, $\hat{Z}_s^{(n)}= Z_s^{(n)} - Z_s^{(n-1)}$, $n \ge 4.$
Then by (\ref {eatimates2}), we have
\begin{align}
& \EE\[ \int_0^T \( \frac{\beta}{2}  |\hat{Y}^{(n)}_{s}|^2  +  |\hat{Z}^{(n)}_{s}|^2 \)  e^{\beta s} ds \] \nonumber \\
\le &\frac{2}{\beta} \EE\[ \int_0^T  \EE' |f_2(s, Y^{(n)}_s, Z^{(n)}_s, Y^{(n-1)}_{s+\delta(s)},Y'^{,(n)}_s, Y'^{,(n-1)}_{s+\delta(s)}) \right. \nonumber\\
&\qquad  \qquad  \quad   - f_2(s, Y^{(n-1)}_s, Z^{(n-1)}_s, Y^{(n-2)}_{s+\delta(s)},Y'^{,(n-1)}_s, Y'^{,(n-2)}_{s+\delta(s)}) |^2  e^{\beta s}ds\left. \]   \nonumber \\
\le &\frac{10C^2}{\beta} \EE \[ \int_0^T  \(2 | \hat{Y}_s^{(n)}|^2 + |\hat{Z}_s^{(n)}|^2 + |\hat{Y}_{s+\delta{s}}^{(n-1)}|^2 + | \hat{Z}_{s+\delta{s}}^{(n-1)}|^2 \) e^{\beta s} ds \] \nonumber  \\
\le& \frac{20C^2}{\beta} \EE \[ \int_0^T  \( | \hat{Y}_s^{(n)}|^2 + |\hat{Z}_s^{(n)}|^2 \) e^{\beta s} ds \] + \frac{10C^2L}{\beta} \EE \[ \int_0^T  \( |\hat{Y}_{s}^{(n-1)}|^2 + | \hat{Z}_{s}^{(n-1)}|^2\) e^{\beta s} ds \]. \nonumber 
\end{align}
Set $\beta = 60C^2L + 60C^2 +3$. Then 
\begin{align}
\frac{2}{3}\EE\[ \int_0^T \(  |\hat{Y}^{(n)}_{s}|^2 +  |\hat{Z}^{(n)}_{s}|^2 \)  e^{\beta s} ds \] \le  \frac{1}{3}\EE \[ \int_0^T  \( |\hat{Y}_{s}^{(n-1)}|^2 + | \hat{Z}_{s}^{(n-1)}|^2\) e^{\beta s} ds \]   \nonumber
\end{align}
Hence,
\begin{align}
\EE\[ \int_0^T \(  |\hat{Y}^{(n)}_{s}|^2 +  |\hat{Z}^{(n)}_{s}|^2 \)  e^{\beta s} ds \] \le  \(\frac{1}{2}\)^{n-4}\EE \[ \int_0^T  \( |\hat{Y}_{s}^{(4)}|^2 + | \hat{Z}_{s}^{(4)}|^2\) e^{\beta s} ds \].   \nonumber
\end{align}
It follows that $\(Y.^{(n)}\)_{n \ge 4}$ and $\(Z.^{(n)}\)_{n \ge 4}$ are Cauchy sequences in $H^2_{\mathscr{F}}(0, T+K; \RR^m) \times H^2_{\mathscr{F}}(0, T; \RR^{m\times d})$. Denote their limits by $Y.$ and $Z.$, respectively. Since $H^2_{\mathscr{F}}(0, T+K; \RR^m)$ and $H^2_{\mathscr{F}}(0, T; \RR^{m\times d})$ are both Banach spaces, we obtain $(Y. \times Z.) \in H^2_{\mathscr{F}}(0, T+K; \RR^m) \times H^2_{\mathscr{F}}(0, T; \RR^{m\times d})$. Note that for all $t \in[0, T]$,
\begin{align}
&\EE\[ \int_t^T  \EE' |f_2(s, Y^{(n)}_s, Z^{(n)}_s, Y^{(n-1)}_{s+\delta(s)},Y'^{,(n)}_s, Y'^{,(n-1)}_{s+\delta(s)}) - f_2(s, Y_s, Z_s, Y_{s+\delta(s)},Y'_s, Y'_{s+\delta(s)}) |^2  e^{\beta s}ds \]   \nonumber \\
\le&  5C^2  \EE \[ \int_0^T  \(2 | Y_s^{(n)} - Y_s|^2 + |Z_s^{(n)} - Z_s|^2 + L| Y_s^{(n-1)} - Y_s|^2 + L| Z_s^{(n-1)} - Z_s|^2 \) e^{\beta s} ds \] \to 0, \nonumber
\end{align}
when $n \to \infty$. Therefore, $(Y., Z.)$ satisfies the following mean-field anticipated BSDE:
\begin{eqnarray}
\begin{cases}
Y_t =\xi_T^{(2)} + \int_t^T  \EE^{'}[f_2(s, Y_s, Z_s, Y_{s+\delta(s)},Y'_s, Y'_{s+\delta(s)})]ds- \int_t^T Z_sdB_s, \ \ t \in [0,T];
\cr 
Y_t = \xi_t^{(2)},\ \ \ \ \ \ \ \ \ \ \ \ \ \ \ \ \ \ \ \ \ \ \ \ \ \ \ \ \ \ \ \ \ \ \ \ \ \ \ \ \ \ \ \ \ \ \ \ \ \ \ \ \ \ \ \ \ \ \ \ \ \ \ \ \ \ \ t \in  (T, T+K]. \nonumber
\end{cases}
\end{eqnarray}
By Theorem \ref{unique}, we know
\begin{align*} 
Y_t = Y_t^{(2)}, \quad a.s.
\end{align*}
Since $Y_t^{(1)} \ge Y_t^{(3)} \ge Y_t^{(4)} \ge Y_t$, it holds immediately
\begin{align*} 
Y_t^{(1)}\geq Y_t^{(2)}, \quad a.s.
\end{align*}
\end{proof}

\section{Formulation of the generalized mean-filed stochastic delay control problem}
\setcounter{equation}{0}

\indent In this section, we give the formulation of our generalized mean-field optimal control problem.\\ 
\indent We consider the generalized mean-field delay type optimal control system, with the state equation (\ref{2eq11})
and the cost functional (\ref{2eq12}). From Theorem \ref{TMFDSDE}, we know equation (\ref{2eq11}) admits a unique solution.
Recall that the agent wishes to minimize his cost functional, namely, an admissible control $u\in \mathcal{U}$ is said to be optimal if
$$J(u)=\min_{v\in \mathcal{U}}J(v).$$\
\indent Throughout this paper, we make the following assumptions on the coefficients:
\begin{hypothesis}\label{2H1}
(1) The given functions $b, \sigma, h ,\Phi$ are differentiable with respect to $(x,x_{\delta},\mu,\mu_\delta, v,v_\delta).$\\
\indent (2) $b, \sigma$ are Lipschitz continuous w.r.t. $(x,x_\delta,\mu,\mu_\delta)$, The derivatives of $b, \sigma$ are Lipschitz continuous and bounded.\\
\indent (3)  The derivatives of $h, \Phi$ are Lipschitz continuous and bounded by $C(1+|x|+|x_\delta|+|v|+|v_\delta| )$.\\
\end{hypothesis}

\indent We will make use of the following notations concerning matrices. We denote by $\mathbb{R}^{n\times d}$ the space of real matrices of $n\times d $-type, and by $\mathbb{R}^{n\times n}_d$ the linear space of the vectors of matrices $M=(M_1,\cdots, M_d)$, with $M_i\in \mathbb{R}^{n\times n}$, $1\leq i \leq d.$ Given any $\alpha, \beta\in \mathbb{R}^{n}$,  $L, S \in \mathbb{R}^{n\times d}$,  $\gamma \in \mathbb{R}^{d}$ and  $M, N \in \mathbb{R}^{n\times n}_d$, we introduce the following notation: $\alpha \beta=\sum^{n}_{i=1}\alpha_i \beta_i \in \mathbb{R}$, $\alpha\times\beta=(\alpha_i \beta_j)_{1\leq i,j\leq n}$; $LS=\sum^{d}_{i=1} L_i S_i \in \mathbb{R}$, where $L=(L_1,\cdots, L_d), S=(S_1,\cdots, S _d)$;
$ML=\sum^{d}_{i=1}M_i L_i \in \mathbb{R}^{n}$; $M\alpha \gamma=\sum^{d}_{i=1}(M_i \alpha) \gamma_i \in \mathbb{R}^n$; $MN=\sum^{d}_{i=1}M_i N_i \in \mathbb{R}^{n\times n}$; For simplicity, we use the following notations $$\Theta_t=(X^u_t,X^u_{t-\delta}, P_ {{X^u_t}},P_ {{X^u_{t-\delta}}},u_t,u_{t-\delta})$$; $$\Theta'_t=((X^u_t)',(X^u_{t-\delta})', P_ {{X^u_t}},P_ {{X^u_{t-\delta}}},(u_t)',(u_{t-\delta})').$$

\indent Let us suppose that $u$ is an optimal control and $X^u$ the associated optimal trajectory. Then we introduce the convex perturbed control as follows:
$$u^{\theta}_t=u_t+\theta(v_t-u_t),$$
where $\theta\geq 0$ is sufficiently small, and $v_t$ is an arbitrary element of $\mathcal{U}$, $X^{\theta}$ is the state under the control $u^{\theta}$. The convexity of $U$ guarantee that $u^{\theta}_t\in \mathcal{U}$, and obviously, 
$$0\leq J(u^{\theta})-J(u).$$

\begin{lemma}\label{2lm31}
Under the Hypothesis \ref{2H1}, we have,
$$\lim_{\theta\rightarrow0}\mathbb{E}[\sup_{0\leq t \leq T}|X^{\theta}_t-X^u_t|^2]=0.$$
\end{lemma}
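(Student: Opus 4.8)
The plan is to set $\hat{X}_t := X^{\theta}_t - X^u_t$ and to establish a Gronwall estimate of the form $\mathbb{E}[\sup_{0\le t\le T}|\hat{X}_t|^2]\le C\theta^2$ with $C$ independent of $\theta$, which gives the claim upon letting $\theta\to 0$. First observe that on the delay interval both controlled trajectories coincide with the deterministic datum, $X^{\theta}_t = X^u_t = \xi_t$ for $t\in[-\delta,0]$, so $\hat{X}_t\equiv 0$ there; in particular the delayed difference $\hat{X}_{t-\delta}$ vanishes for $t\in[0,\delta]$ and equals $\hat{X}_{t-\delta}$ with $t-\delta\ge 0$ for $t\in[\delta,T]$. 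For $t\in[0,T]$, subtracting the two copies of the state equation (\ref{2eq11}) gives
\begin{align*}
\hat{X}_t &= \int_0^t\big(b(s,X^{\theta}_s,X^{\theta}_{s-\delta},P_{X^{\theta}_s},P_{X^{\theta}_{s-\delta}},u^{\theta}_s,u^{\theta}_{s-\delta}) - b(s,X^u_s,X^u_{s-\delta},P_{X^u_s},P_{X^u_{s-\delta}},u_s,u_{s-\delta})\big)\,ds \\
&\quad + \int_0^t\big(\sigma(s,X^{\theta}_s,X^{\theta}_{s-\delta},P_{X^{\theta}_s},P_{X^{\theta}_{s-\delta}},u^{\theta}_s,u^{\theta}_{s-\delta})-\sigma(s,X^u_s,X^u_{s-\delta},P_{X^u_s},P_{X^u_{s-\delta}},u_s,u_{s-\delta})\big)\,dB_s,
\end{align*}
where $u^{\theta}_s - u_s = \theta(v_s-u_s)$.

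Next I would take the supremum over $[0,t]$, pass to expectations, apply the Burkholder--Davis--Gundy inequality to the martingale part and Cauchy--Schwarz to the Lebesgue integral to get
\begin{align*}
\mathbb{E}\Big[\sup_{0\le r\le t}|\hat{X}_r|^2\Big] &\le C\,\mathbb{E}\int_0^t\Big(\big|b(s,X^{\theta}_s,\ldots)-b(s,X^u_s,\ldots)\big|^2 \\
&\qquad\qquad + \big|\sigma(s,X^{\theta}_s,\ldots)-\sigma(s,X^u_s,\ldots)\big|^2\Big)\,ds,
\end{align*}
and then invoke the Lipschitz continuity of $b$ and $\sigma$ in $(x,x_\delta,\mu,\mu_\delta,v,v_\delta)$ from Hypothesis \ref{2H1}, together with the elementary bound $W_2(P_{X^{\theta}_s},P_{X^u_s})^2\le \mathbb{E}|X^{\theta}_s-X^u_s|^2$ (and likewise for the delayed laws), to dominate the integrand by a constant multiple of $|\hat{X}_s|^2+|\hat{X}_{s-\delta}|^2+\mathbb{E}|\hat{X}_s|^2+\mathbb{E}|\hat{X}_{s-\delta}|^2+\theta^2(|v_s-u_s|^2+|v_{s-\delta}-u_{s-\delta}|^2)$. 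Using $\mathbb{E}|\hat{X}_s|^2\le\mathbb{E}[\sup_{0\le r\le s}|\hat{X}_r|^2]$ and the shift estimate $\int_0^t\mathbb{E}|\hat{X}_{s-\delta}|^2\,ds\le\int_0^t\mathbb{E}[\sup_{0\le r\le s}|\hat{X}_r|^2]\,ds$ (the part with $s-\delta<0$ contributing nothing), this collapses to
\begin{align*}
\mathbb{E}\Big[\sup_{0\le r\le t}|\hat{X}_r|^2\Big] \le C\int_0^t\mathbb{E}\Big[\sup_{0\le r\le s}|\hat{X}_r|^2\Big]\,ds + C\theta^2\,\mathbb{E}\int_{-\delta}^T|v_s-u_s|^2\,ds .
\end{align*}

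Finally, since $u,v\in\mathcal{U}=H^2_{\mathscr{F}}(0,T;U)$ (and are square integrable on $[-\delta,0]$), the last term is $C\theta^2$ times a fixed finite constant, and a standard a priori estimate — using that $X^{\theta},X^u$ are the solutions furnished by Theorem \ref{TMFDSDE} and lie in $S^2_{\mathscr{F}}$ — guarantees that $t\mapsto\mathbb{E}[\sup_{0\le r\le t}|\hat{X}_r|^2]$ is finite, so Gronwall's inequality yields $\mathbb{E}[\sup_{0\le t\le T}|\hat{X}_t|^2]\le C\theta^2 e^{CT}\to 0$. I expect the main technical obstacle to be organizing the drift and diffusion increments so that the pathwise delayed term $\hat{X}_{t-\delta}$ and the two distribution-dependent terms are all re-expressed through the single scalar quantity $\mathbb{E}[\sup_{0\le r\le s}|\hat{X}_r|^2]$, so that the one-variable Gronwall lemma applies; a secondary point is the a priori finiteness of this quantity, which, if one does not wish to quote the $S^2$ bound, can be secured by running the argument up to the stopping time $\tau_N=\inf\{t:|\hat{X}_t|\ge N\}$ and letting $N\to\infty$ by monotone convergence.
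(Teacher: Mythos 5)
Your proposal is correct and follows essentially the same route as the paper: subtract the two state equations, estimate the drift and diffusion increments via the Lipschitz condition in Hypothesis \ref{2H1} (with $W_2(P_{X^\theta_s},P_{X^u_s})^2\le\mathbb{E}|X^\theta_s-X^u_s|^2$ and the vanishing of the difference on $[-\delta,0]$ absorbing the delayed and law-dependent terms), isolate the $\theta^2\mathbb{E}\int|v_s-u_s|^2\,ds$ contribution from the control perturbation, and conclude by Gronwall. The paper's proof is just a condensed version of exactly this argument, so no further comparison is needed.
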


\begin{proof}
\indent Note that, for $\tau\in [0,T]$
\begin{eqnarray}
 &&\mathbb{E}\left[\sup_{0\leq t \leq \tau}|X^{\theta}_t-X^u_t|^2\right]\nonumber \\
 &\leq &C\mathbb{E}\int^\tau_0\left|b(t,X^\theta_s,X^\theta_{s-\delta}, P_ {{X^\theta_s}},P_ {{X^\theta_{s-\delta}}},u^\theta_t,u^\theta_{t-\delta})-b(s,\Theta_s)\right|^2ds\nonumber\\
 &&+C\mathbb{E}\int^\tau_0\left|\sigma(t,X^\theta_s,X^\theta_{s-\delta}, P_ {{X^\theta_s}},P_ {{X^\theta_{s-\delta}}},u^\theta_t,u^\theta_{t-\delta})-\sigma(s,\Theta_s)\right|^2ds\nonumber\\
 &\leq &C\mathbb{E}\int^\tau_0\sup_{0\leq s\leq r}\left|X^{\theta}_s-X^u_s\right|^2dr+\theta^2 C\mathbb{E}\int^T_0\left|v_s-u_s\right|^2ds.
 \end{eqnarray}
From Gronwall's inequality we have the desired result.
\end{proof} 

\indent Next, we study the variational process of our state.

\begin{lemma}
Let $K_t$ be the solution of the following linear equation:

\begin{equation}\label{2eq31}
\left\{
\begin{aligned}
  dK_t=&\bigg\{b_x(t,\Theta_t)K_t+\mathbb{E}'\left[b_{\mu }(t,\Theta_t, (X^u_t)')(K_t)'\right]+b_{x_\delta}(t,\Theta_t)K_{t-\delta}&\\
  &+\mathbb{E}'\left[b_{\mu_\delta }(t,\Theta_t, (X^u_{t-\delta})')(K_{t-\delta})'\right]+b_v(t,\Theta_t)(v_t-u_t)&\\
  &+b_{v_\delta}(t,\Theta_t)(v_{t-\delta}-u_{t-\delta})\bigg\}dt&\\
   &+\bigg\{\sigma_x(t,\Theta_t)K_t+\mathbb{E}'\left[\sigma_{\mu }(t,\Theta_t, (X^u_t)')(K_t)'\right]+\sigma_{x_\delta}(t,\Theta_t)K_{t-\delta}&\\
  &+\mathbb{E}'\left[\sigma_{\mu_\delta }(t,\Theta_t, (X^u_{t-\delta})')(K_{t-\delta})'\right]+\sigma_v(t,\Theta_t)(v_t-u_t)&\\
  &+\sigma_{v_\delta}(t,\Theta_t)(v_{t-\delta}-u_{t-\delta})\bigg\}dB_t,\ \ \ \ t\in[0,T],&\\
 K_0=&0,\  v_t=u_t,\  \ \ \ \ \ \ \ \ \ \ \ \ \ \ \ \ \ \ \ \ \ \ \ \ \ \ \ \  \ t\in[-\delta,0]. &
  \end{aligned}
 \right.
\end{equation}  
Then we have 
$$\lim_{\theta\rightarrow0}\mathbb{E}\left[\sup_{0\leq s \leq t}\left| \frac{X^{\theta}_s-X^u_s}{\theta}-K_s\right|^2\right]=0,$$
for all $t\in[0,T].$
\end{lemma}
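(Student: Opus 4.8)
The plan is to show that $K^\theta_t:=(X^\theta_t-X^u_t)/\theta$ converges to $K$ in $S^2_{\mathscr{F}}(0,T;\mathbb{R}^m)$ by deriving a Gronwall-type estimate for the difference $K^\theta-K$. First I would note that $K^\theta_t=0$ on $[-\delta,0]$ and, subtracting the state equation (\ref{2eq11}) for $X^\theta$ and for $X^u$ and dividing by $\theta$,
\begin{align*}
K^\theta_t = \int_0^t \frac{b(s,\Theta^\theta_s)-b(s,\Theta_s)}{\theta}\,ds + \int_0^t \frac{\sigma(s,\Theta^\theta_s)-\sigma(s,\Theta_s)}{\theta}\,dB_s,
\end{align*}
where $\Theta^\theta_s=(X^\theta_s,X^\theta_{s-\delta},P_{X^\theta_s},P_{X^\theta_{s-\delta}},u^\theta_s,u^\theta_{s-\delta})$. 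A preliminary a priori bound, obtained exactly as in Lemma~\ref{2lm31} via the Lipschitz property in Hypothesis~\ref{2H1}, the Burkholder--Davis--Gundy (BDG) inequality and Gronwall's lemma, gives $\sup_{\theta\in(0,1]}\mathbb{E}[\sup_{0\le t\le T}|K^\theta_t|^2]<\infty$; linearity and boundedness of the coefficients of (\ref{2eq31}) give $K\in S^2_{\mathscr{F}}(0,T;\mathbb{R}^m)$.

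Next I would expand the increments of $b$ and $\sigma$ along the segment joining $\Theta_s$ to $\Theta^\theta_s$. For the $\mathbb{R}^m$- and $U$-valued arguments this is the fundamental theorem of calculus; for the measure arguments I use, for each fixed $s$,
\begin{align*}
\frac{1}{\theta}\Big[b(\ldots,P_{X^\theta_s},\ldots)-b(\ldots,P_{X^u_s},\ldots)\Big] = \int_0^1 \mathbb{E}'\Big[\partial_\mu b\big(\ldots,P_{X^u_s+\lambda(X^\theta_s-X^u_s)},\ldots,(X^u_s+\lambda(X^\theta_s-X^u_s))'\big)(K^\theta_s)'\Big]d\lambda,
\end{align*}
and likewise for the $P_{X^\theta_{s-\delta}}$-slot. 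Using $u^\theta_s-u_s=\theta(v_s-u_s)$, the drift quotient becomes a $\lambda$-average of the terms of (\ref{2eq31}) but with every derivative evaluated at the interpolated point $\Theta^{\theta,\lambda}_s$ and multiplied by $K^\theta_s$, $K^\theta_{s-\delta}$, $(K^\theta_s)'$, $(K^\theta_{s-\delta})'$, $v_s-u_s$, $v_{s-\delta}-u_{s-\delta}$; similarly for the diffusion. Subtracting (\ref{2eq31}) and splitting each product as $D^{\theta,\lambda}\cdot K^\theta-D\cdot K=D^{\theta,\lambda}(K^\theta-K)+(D^{\theta,\lambda}-D)K$, where $D$ denotes the corresponding derivative at $\Theta_s$, I obtain
\begin{align*}
K^\theta_t-K_t = \int_0^t A^\theta_s\,ds + \int_0^t B^\theta_s\,dB_s,
\end{align*}
with $A^\theta_s,B^\theta_s$ linear in $K^\theta_s-K_s$, $K^\theta_{s-\delta}-K_{s-\delta}$, $(K^\theta_s-K_s)'$, $(K^\theta_{s-\delta}-K_{s-\delta})'$ with bounded coefficients, plus a remainder collecting the terms $(D^{\theta,\lambda}_s-D_s)K_s$, $(D^{\theta,\lambda}_s-D_s)K_{s-\delta}$, $(b_v(s,\Theta^{\theta,\lambda}_s)-b_v(s,\Theta_s))(v_s-u_s)$ and their $\sigma$- and copy-analogues, integrated over $\lambda\in[0,1]$.

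Then, applying BDG to the martingale part, Cauchy--Schwarz, Fubini (to move $\mathbb{E}'$ out, using $\mathbb{E}[\mathbb{E}'|(K^\theta_s-K_s)'|^2]=\mathbb{E}|K^\theta_s-K_s|^2$), boundedness of the derivatives, and the fact that $K^\theta$ and $K$ vanish on $[-\delta,0]$ (so $\mathbb{E}[\sup_{0\le s\le t}|K^\theta_{s-\delta}-K_{s-\delta}|^2]\le\mathbb{E}[\sup_{0\le s\le t}|K^\theta_s-K_s|^2]$), one arrives at
\begin{align*}
\mathbb{E}\Big[\sup_{0\le s\le t}|K^\theta_s-K_s|^2\Big] \le C\int_0^t \mathbb{E}\Big[\sup_{0\le r\le s}|K^\theta_r-K_r|^2\Big]ds + C\rho(\theta),
\end{align*}
where $\rho(\theta)$ is the $L^2$-norm of that remainder, a finite sum of terms such as $\mathbb{E}\int_0^T\!\int_0^1|D^{\theta,\lambda}_s-D_s|^2(|K_s|^2+|K_{s-\delta}|^2)\,d\lambda\,ds$ and $\mathbb{E}\int_0^T\!\int_0^1|b_v(s,\Theta^{\theta,\lambda}_s)-b_v(s,\Theta_s)|^2|v_s-u_s|^2\,d\lambda\,ds$. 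Gronwall's inequality then reduces the claim to $\rho(\theta)\to0$: by Lemma~\ref{2lm31}, $X^\theta\to X^u$ in $S^2$, so along a subsequence $\Theta^{\theta,\lambda}_s\to\Theta_s$ for a.e.\ $(s,\omega)$ and $W_2(P_{X^u_s+\lambda(X^\theta_s-X^u_s)},P_{X^u_s})\le(\mathbb{E}|X^\theta_s-X^u_s|^2)^{1/2}\to0$; continuity of the derivatives of $b,\sigma$ (Hypothesis~\ref{2H1}) gives $D^{\theta,\lambda}_s-D_s\to0$ and $b_v(s,\Theta^{\theta,\lambda}_s)-b_v(s,\Theta_s)\to0$ pointwise, while their uniform boundedness together with $K\in S^2$, $v-u\in H^2$ lets dominated convergence conclude $\rho(\theta)\to0$ (the subsequence being arbitrary, this holds for the full limit), whence $\mathbb{E}[\sup_{0\le s\le t}|K^\theta_s-K_s|^2]\le C\rho(\theta)e^{Ct}\to0$ for each $t\in[0,T]$. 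I expect the main obstacle to be precisely the rigorous treatment of the law-dependent terms: justifying the interpolation identity along the curve of laws $\lambda\mapsto P_{X^u_s+\lambda(X^\theta_s-X^u_s)}$, correctly bookkeeping the copy random variables $(K^\theta_s)'$ via Fubini and the independence of the copy space, and establishing the pointwise convergence of $\partial_\mu b$ and $\partial_\mu\sigma$ at the interpolated measure and base point strongly enough to pass to the limit in $\rho(\theta)$; the finite-dimensional and control contributions are routine.
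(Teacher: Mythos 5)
Your proposal is correct and follows essentially the same route as the paper: both set $\eta=K^\theta-K$ (your $K^\theta_t=(X^\theta_t-X^u_t)/\theta$), expand the difference quotients of $b$ and $\sigma$ by interpolating along the segment in each finite-dimensional slot and along the curve of laws $\lambda\mapsto P_{X^u_s+\lambda(X^\theta_s-X^u_s)}$ in the measure slots, split each product into a bounded-coefficient linear term in $\eta$ plus a remainder of the form $(D^{\theta,\lambda}-D)K$, show the remainder vanishes in $L^2$ via Lemma \ref{2lm31} and continuity of the derivatives, and conclude by Gronwall. Your bookkeeping is in fact slightly cleaner than the paper's (your remainder correctly carries the differences $(b_v(s,\Theta^{\theta,\lambda}_s)-b_v(s,\Theta_s))(v_s-u_s)$, where the paper's $\beta^\theta$ as written keeps the full $b_v$ terms), but the argument is the same.
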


\begin{proof}
From Theorem \ref{TMFDSDE}, we know equation (\ref{2eq31}) admits a unique solution $K_t$. We set
$$\eta_t=\frac{X^{\theta}_t-X^u_t}{\theta}-K_t, \ \ \ t\in [0,T].$$
Then we have
\begin{eqnarray}
  \eta_t&=&\frac{1}{\theta}\int^t_0\left[b(t,X^\theta_s,X^\theta_{s-\delta}, P_ {{X^\theta_s}},P_ {{X^\theta_{s-\delta}}},u^\theta_t,u^\theta_{t-\delta})-b(s,\Theta_s)\right]ds\nonumber\\
  &&+\frac{1}{\theta}\int^t_0\left[\sigma(t,X^\theta_s,X^\theta_{s-\delta}, P_ {{X^\theta_s}},P_ {{X^\theta_{s-\delta}}},u^\theta_t,u^\theta_{t-\delta})-\sigma(s,\Theta_s)\right]dB_s\nonumber\\
  &&-\int^t_0\bigg\{b_x(s,\Theta_t)K_s+\mathbb{E}'\left[b_{\mu }(s,\Theta_s, (X^u_s)')(K_s)'\right]+b_{x_\delta}(s,\Theta_s)K_{s-\delta}\nonumber\\
  &&+\mathbb{E}'\left[b_{\mu_\delta }(s,\Theta_s, (X^u_{s-\delta})')(K_{s-\delta})'\right]+b_v(s,\Theta_s)(v_s-u_s)\nonumber\\
  &&+b_{v_\delta}(s,\Theta_s)(v_{s-\delta}-u_{s-\delta})\bigg\}ds\nonumber\\
   &&+\bigg\{\sigma_x(s,\Theta_s)K_s+\mathbb{E}'\left[\sigma_{\mu }(s,\Theta_s, (X^u_s)')(K_s)'\right]+\sigma_{x_\delta}(s,\Theta_s)K_{s-\delta}\nonumber\\
  &&+\mathbb{E}'\left[\sigma_{\mu_\delta }(s,\Theta_s, (X^u_{s-\delta})')(K_{s-\delta})'\right]+\sigma_v(s,\Theta_s)(v_s-u_s)\nonumber\\
  &&+\sigma_{v_\delta}(s,\Theta_s)(v_{s-\delta}-u_{s-\delta})\bigg\}dB_s.\nonumber\\
\end{eqnarray}  
 
Since for any $f\in C^{2,1}(\mathcal{P}_2(\mathbb{R}^d))$
\begin{eqnarray}
  &&f(P_{\mu})-f(P_{\mu_0})=\int^1_0\frac{d}{d\lambda}f(P_{\mu_0+\lambda\eta})d\lambda\nonumber\\
 & =&\int^1_0\mathbb{E}\big[f_{\mu}(P_{\mu_0+\lambda\eta},\mu_0+\lambda\eta)\cdot\eta\big]d\lambda,
\end{eqnarray}

 we notice that
\begin{eqnarray*}
  &&\frac{1}{\theta}\int^t_0\Big[b(t,X^\theta_s,X^\theta_{s-\delta}, P_ {{X^\theta_s}},P_ {{X^\theta_{s-\delta}}},u^\theta_s,u^\theta_{s-\delta})\\
  &&-b(s,X^u_s,X^\theta_{s-\delta}, P_ {{X^\theta_s}},P_ {{X^\theta_{s-\delta}}},u^\theta_s,u^\theta_{s-\delta})\Big]ds\\
  &=&\int^t_0 \int^1_0 b_x(s,X^u_s+\lambda\theta(\eta_s+K_s),X^\theta_{s-\delta}, P_ {{X^\theta_s}},P_ {{X^\theta_{s-\delta}}},u^\theta_s,u^\theta_{s-\delta})\\
  &&(\eta_s+K_s)d\lambda ds,\\
  \\
   &&\frac{1}{\theta}\int^t_0\Big[b(t,X^u_s,X^\theta_{s-\delta}, P_ {{X^\theta_s}},P_ {{X^\theta_{s-\delta}}},u^\theta_s,u^\theta_{s-\delta})\\
  &&-b(s,X^u_s,X^u_{s-\delta}, P_ {{X^\theta_s}},P_ {{X^\theta_{s-\delta}}},u^\theta_s,u^\theta_{s-\delta})\Big]ds\\
  &=&\int^t_0 \int^1_0 b_{x_\delta}(s,X^u_s,X^u_{s-\delta}+\lambda\theta(\eta_{s-\delta}+K_{s-\delta}), P_ {{X^\theta_s}},P_ {{X^\theta_{s-\delta}}},u^\theta_s,u^\theta_{s-\delta})\\
  &&(\eta_s+K_s)d\lambda ds,\\
  \\
  &&\frac{1}{\theta}\int^t_0\Big[b(t,X^u_s,X^u_{s-\delta}, P_ {{X^\theta_s}},P_ {{X^\theta_{s-\delta}}},u^\theta_s,u^\theta_{s-\delta})\\
  &&-b(s,X^u_s,X^u_{s-\delta}, P_ {{X^u_s}},P_ {{X^\theta_{s-\delta}}},u^\theta_s,u^\theta_{s-\delta})\Big]ds\\
  &=&\int^t_0 \int^1_0 \mathbb{E}'\Big[b_{\mu }\big(s,X^u_s,X^u_{s-\delta},P_ {X^u_s+\lambda\theta(\eta_s+K_s)},P_ {{X^\theta_{s-\delta}}},u^\theta_s,u^\theta_{s-\delta},\\
  &&(X^u_s+\lambda\theta(\eta_s+K_s))'\big)(\eta_s+K_s)'\Big]d\lambda ds,\\
  \\
    &&\frac{1}{\theta}\int^t_0\Big[b(t,X^u_s,X^u_{s-\delta}, P_ {{X^u_s}},P_ {{X^\theta_{s-\delta}}},u^\theta_s,u^\theta_{s-\delta})\\
  &&-b(s,X^u_s,X^u_{s-\delta}, P_ {{X^u_s}},P_ {{X^u_{s-\delta}}},u^\theta_s,u^\theta_{s-\delta})\Big]ds\\
  &=&\int^t_0 \int^1_0 \mathbb{E}'\Big[b_{\mu_{\delta }}\big(s,X^u_s,X^u_{s-\delta},P_ {{X^u_{s}}},P_ {X^u_{s-\delta}+\lambda\theta(\eta_{s-\delta}+K_{s-\delta})},\\
  &&u^\theta_s,u^\theta_{s-\delta},(X^u_{s-\delta}+\lambda\theta(\eta_{s-\delta}+K_{s-\delta}))'\big)(\eta_s+K_s)'\Big]d\lambda ds,\\
  \\
     &&\frac{1}{\theta}\int^t_0\Big[b(t,X^u_s,X^u_{s-\delta}, P_ {{X^u_s}},P_ {{X^u_{s-\delta}}},u^\theta_s,u^\theta_{s-\delta})\\
  &&-b(s,X^u_s,X^u_{s-\delta}, P_ {{X^u_s}},P_ {{X^u_{s-\delta}}},u_s,u^\theta_{s-\delta})\Big]ds\\
  &=&\int^t_0 \int^1_0 b_v(s,X^u_s,X^u_{s-\delta}, P_ {{X^u_s}},P_ {{X^u_{s-\delta}}},u_s+\lambda\theta(v_s-u_s),u^\theta_{s-\delta})\\
  &&(v_s-u_s)d\lambda ds,\\
  \\
     &&\frac{1}{\theta}\int^t_0\Big[b(t,X^u_s,X^u_{s-\delta}, P_ {{X^u_s}},P_ {{X^u_{s-\delta}}},u_s,u^\theta_{s-\delta})\\
  &&-b(s,X^u_s,X^u_{s-\delta}, P_ {{X^u_s}},P_ {{X^u_{s-\delta}}},u_s,u_{s-\delta})\Big]ds\\
  &=&\int^t_0 \int^1_0 b_{v_\delta}(s,X^u_s,X^u_{s-\delta}, P_ {{X^u_s}},P_ {{X^u_{s-\delta}}},u_s,u^\theta_{s-\delta}+\lambda\theta(v_{s-\delta}-u_{s-\delta}))\\
  &&(v_{s-\delta}-u_{s-\delta})d\lambda ds.\\
\end{eqnarray*}  

Similarly result can be obtained for $\sigma$. On the other hand, we have
\begin{eqnarray*} 
&&\frac{1}{\theta}\int^t_0\Big[b(t,X^u_s,X^u_{s-\delta}, P_ {{X^\theta_s}},P_ {{X^\theta_{s-\delta}}},u^\theta_s,u^\theta_{s-\delta})\\
  &&-b(s,X^u_s,X^u_{s-\delta}, P_ {{X^u_s}},P_ {{X^\theta_{s-\delta}}},u^\theta_s,u^\theta_{s-\delta})\Big]ds\\
   &&-\int^t_0\mathbb{E}'\left[b_{\mu }(s,\Theta_s, (X^u_s)')(K_s)'\right]ds\\
&=&\int^t_0 \int^1_0 \mathbb{E}'\Big[b_{\mu }\big(s,X^u_s,X^u_{s-\delta},P_ {X^u_s+\lambda\theta(\eta_s+K_s)},P_ {{X^\theta_{s-\delta}}},u^\theta_s,u^\theta_{s-\delta},\\
&&(X^u_s+\lambda\theta(\eta_s+K_s))'\big)(\eta_s)'\Big]d\lambda ds\\
  &&+\int^t_0 \int^1_0 \mathbb{E}'\bigg\{\Big[b_{\mu }\big(s,X^u_s,X^u_{s-\delta},P_ {X^u_s+\lambda\theta(\eta_s+K_s)},P_ {{X^\theta_{s-\delta}}},u^\theta_s,u^\theta_{s-\delta},\\
  &&(X^u_s+\lambda\theta(\eta_s+K_s))')-b_{\mu }(s,\Theta_s, (X^u_s)'\big)\Big](K_s)'\bigg\}d\lambda ds.\\
\end{eqnarray*} 
Set 
 \begin{eqnarray*}
 I^{\theta}_t&=&\int^t_0 \int^1_0 \mathbb{E}'\bigg\{\Big[b_{\mu }\big(s,X^u_s,X^u_{s-\delta},P_ {X^u_s+\lambda\theta(\eta_s+K_s)},P_ {{X^\theta_{s-\delta}}},u^\theta_s,u^\theta_{s-\delta},\\
 &&(X^u_s+\lambda\theta(\eta_s+K_s))'\big)-b_{\mu }\left(s,\Theta_s, (X^u_s)'\right)\Big](K_s)'\bigg\}d\lambda ds.\\
\end{eqnarray*}  
Then, from Lemma \ref{2lm31}, Lipschitz continuity and the definition of 2-Wasserstein metric, we have
$$\lim_{\theta\rightarrow0}\mathbb{E}\big[\sup_{0\leq s \leq T}|I^{\theta}_s|^2\big]=0.$$

Therefore, we have
\begin{eqnarray}
 &&\mathbb{E}\big[\sup_{0\leq s \leq t}|{\eta}_s|^2\big]\nonumber\\
&\leq &C \mathbb{E}\int^t_0 \int^1_0 \Big|b_x(s,X^u_s+\lambda\theta(\eta_s+K_s),X^\theta_{s-\delta}, P_ {{X^\theta_s}},P_ {{X^\theta_{s-\delta}}},u^\theta_s,u^\theta_{s-\delta})\nonumber\\
  &&(\eta_s)\Big|^2 d\lambda ds\nonumber\\
 &&+C \mathbb{E}\int^t_0 \int^1_0 \Big| b_{x_\delta}(s,X^u_s,X^u_{s-\delta}+\lambda\theta(\eta_{s-\delta}+K_{s-\delta}), P_ {{X^\theta_s}},P_ {{X^\theta_{s-\delta}}},u^\theta_s,u^\theta_{s-\delta})\nonumber\\
  &&(\eta_s)\Big|^2d\lambda ds\nonumber\\
 &&+C \mathbb{E}\int^t_0 \int^1_0 \mathbb{E}'\Big[b_{\mu }\big(s,X^u_s,X^u_{s-\delta},P_ {X^u_s+\lambda\theta(\eta_s+K_s)},P_ {{X^\theta_{s-\delta}}},u^\theta_s,u^\theta_{s-\delta},\\
 &&(X^u_s+\lambda\theta(\eta_s+K_s))'\big)(\eta_s)'\Big]^2d\lambda ds\nonumber\\
 &&+C \mathbb{E}\int^t_0 \int^1_0 \mathbb{E}'\Big[b_{\mu_{\delta }}\big(s,X^u_s,X^u_{s-\delta},P_ {{X^u_{s}}},P_ {X^u_{s-\delta}+\lambda\theta(\eta_{s-\delta}+K_{s-\delta})},\nonumber\\
  &&u^\theta_s,u^\theta_{s-\delta},(X^u_{s-\delta}+\lambda\theta(\eta_{s-\delta}+K_{s-\delta}))'\big)(\eta_s)'\Big]^2d\lambda ds\nonumber\\
 &&+C \mathbb{E}\int^t_0 \int^1_0 \Big|\sigma_x(s,X^u_s+\lambda\theta(\eta_s+K_s),X^\theta_{s-\delta}, P_ {{X^\theta_s}},P_ {{X^\theta_{s-\delta}}},u^\theta_s,u^\theta_{s-\delta})\nonumber\\
  &&(\eta_s)\Big|^2 d\lambda ds\nonumber\\
 &&+C \mathbb{E}\int^t_0 \int^1_0 \Big| \sigma_{x_\delta}(s,X^u_s,X^u_{s-\delta}+\lambda\theta(\eta_{s-\delta}+K_{s-\delta}), P_ {{X^\theta_s}},P_ {{X^\theta_{s-\delta}}},u^\theta_s,u^\theta_{s-\delta})\nonumber\\
  &&(\eta_s)\Big|^2d\lambda ds\nonumber\\
 &&+C \mathbb{E}\int^t_0 \int^1_0 \mathbb{E}'\Big[\sigma_{\mu }\big(s,X^u_s,X^u_{s-\delta},P_ {X^u_s+\lambda\theta(\eta_s+K_s)},P_ {{X^\theta_{s-\delta}}},u^\theta_s,u^\theta_{s-\delta},\nonumber\\
 &&(X^u_s+\lambda\theta(\eta_s+K_s))'\big)(\eta_s)'\Big]^2d\lambda ds\nonumber\\
 &&+C \mathbb{E}\int^t_0 \int^1_0 \mathbb{E}'\Big[\sigma_{\mu_{\delta }}\big(s,X^u_s,X^u_{s-\delta},P_ {{X^u_{s}}},P_ {X^u_{s-\delta}+\lambda\theta(\eta_{s-\delta}+K_{s-\delta})},\nonumber\\
  &&u^\theta_s,u^\theta_{s-\delta},(X^u_{s-\delta}+\lambda\theta(\eta_{s-\delta}+K_{s-\delta}))'\big)(\eta_s)'\Big]^2d\lambda ds\nonumber\\
 &&+C\mathbb{E}\big[\sup_{0\leq s \leq t}|\beta^{\theta}_s|^2\big]
\end{eqnarray}  
where
\begin{eqnarray}
  \beta^{\theta}_t&=&\int^t_0 \int^1_0\Big[b_x(s,X^u_s+\lambda\theta(\eta_s+K_s),X^\theta_{s-\delta}, P_ {{X^\theta_s}},P_ {{X^\theta_{s-\delta}}},u^\theta_s,u^\theta_{s-\delta})\nonumber\\
  &&-b_x(t,\Theta_t)\Big](K_s)d\lambda ds\nonumber\\
  &&+\int^t_0 \int^1_0 \Big[ b_{x_\delta}(s,X^u_s,X^u_{s-\delta}+\lambda\theta(\eta_{s-\delta}+K_{s-\delta}), P_ {{X^\theta_s}},P_ {{X^\theta_{s-\delta}}},u^\theta_s,u^\theta_{s-\delta})\nonumber\\
  &&-b_{x_\delta}(t,\Theta_t)\Big](K_s)d\lambda ds\nonumber\\
  &&+\int^t_0 \int^1_0 \mathbb{E}'\bigg\{\Big[b_{\mu }\big(s,X^u_s,X^u_{s-\delta},P_ {X^u_s+\lambda\theta(\eta_s+K_s)},P_ {{X^\theta_{s-\delta}}},u^\theta_s,u^\theta_{s-\delta},\nonumber\\
  &&(X^u_s+\lambda\theta(\eta_s+K_s))'\big)-b_{\mu }(t,\Theta_t, (X^u_t)')\Big](K_s)'\bigg\}d\lambda ds\nonumber\\
  &&+\int^t_0 \int^1_0 \mathbb{E}'\bigg\{\Big[b_{\mu_{\delta }}\big(s,X^u_s,X^u_{s-\delta},P_ {{X^u_{s}}},P_ {X^u_{s-\delta}+\lambda\theta(\eta_{s-\delta}+K_{s-\delta})},\nonumber\\
  &&u^\theta_s,u^\theta_{s-\delta},(X^u_{s-\delta}+\lambda\theta(\eta_{s-\delta}+K_{s-\delta}))'\big)-b_{\mu_\delta }(t,\Theta_t, (X^u_{t-\delta})')\Big](K_s)'\bigg\}d\lambda ds\nonumber\\
 &&+\int^t_0 \int^1_0 b_v(s,X^u_s,X^u_{s-\delta}, P_ {{X^u_s}},P_ {{X^u_{s-\delta}}},u_s+\lambda\theta(v_s-u_s),u^\theta_{s-\delta})\nonumber\\
  &&(v_s-u_s)d\lambda ds\nonumber\\
  &&+\int^t_0 \int^1_0 b_{v_\delta}(s,X^u_s,X^u_{s-\delta}, P_ {{X^u_s}},P_ {{X^u_{s-\delta}}},u_s,u^\theta_{s-\delta}+\lambda\theta(v_{s-\delta}-u_{s-\delta}))\nonumber\\
  &&(v_{s-\delta}-u_{s-\delta})d\lambda ds\nonumber\\
  &&+\int^t_0 \int^1_0\Big[\sigma_x(s,X^u_s+\lambda\theta(\eta_s+K_s),X^\theta_{s-\delta}, P_ {{X^\theta_s}},P_ {{X^\theta_{s-\delta}}},u^\theta_s,u^\theta_{s-\delta})\nonumber\\
  &&-\sigma_x(t,\Theta_t)\Big](K_s)d\lambda ds\nonumber\\
  &&+\int^t_0 \int^1_0 \Big[ \sigma_{x_\delta}(s,X^u_s,X^u_{s-\delta}+\lambda\theta(\eta_{s-\delta}+K_{s-\delta}), P_ {{X^\theta_s}},P_ {{X^\theta_{s-\delta}}},u^\theta_s,u^\theta_{s-\delta})\nonumber\\
  &&-\sigma_{x_\delta}(t,\Theta_t)\Big](K_s)d\lambda ds\nonumber\\
  &&+\int^t_0 \int^1_0 \mathbb{E}'\bigg\{\Big[\sigma_{\mu }\big(s,X^u_s,X^u_{s-\delta},P_ {X^u_s+\lambda\theta(\eta_s+K_s)},P_ {{X^\theta_{s-\delta}}},u^\theta_s,u^\theta_{s-\delta},\nonumber\\
  &&(X^u_s+\lambda\theta(\eta_s+K_s))'\big)-\sigma_{\mu }(t,\Theta_t, (X^u_t)')\Big](K_s)'\bigg\}d\lambda ds\nonumber\\
  &&+\int^t_0 \int^1_0 \mathbb{E}'\bigg\{\Big[\sigma_{\mu_{\delta }}\big(s,X^u_s,X^u_{s-\delta},P_ {{X^u_{s}}},P_ {X^u_{s-\delta}+\lambda\theta(\eta_{s-\delta}+K_{s-\delta})},\nonumber\\
  &&u^\theta_s,u^\theta_{s-\delta},(X^u_{s-\delta}+\lambda\theta(\eta_{s-\delta}+K_{s-\delta}))'\big)-\sigma_{\mu_\delta }(t,\Theta_t, (X^u_{t-\delta})')\Big](K_s)'\bigg\}d\lambda ds\nonumber\\
 &&+\int^t_0 \int^1_0 \sigma_v(s,X^u_s,X^u_{s-\delta}, P_ {{X^u_s}},P_ {{X^u_{s-\delta}}},u_s+\lambda\theta(v_s-u_s),u^\theta_{s-\delta})\nonumber\\
  &&(v_s-u_s)d\lambda ds\nonumber\\
  &&+\int^t_0 \int^1_0 \sigma_{v_\delta}(s,X^u_s,X^u_{s-\delta}, P_ {{X^u_s}},P_ {{X^u_{s-\delta}}},u_s,u^\theta_{s-\delta}+\lambda\theta(v_{s-\delta}-u_{s-\delta}))\nonumber\\
  &&(v_{s-\delta}-u_{s-\delta})d\lambda ds\nonumber\\
\end{eqnarray}
Proceeding as in the estimate of $I^{\theta}_t$, we can prove that
$$\lim_{\theta\rightarrow0}\mathbb{E}\big[\sup_{0\leq s \leq T}|\beta^{\theta}_s|^2\big]=0.$$ 
Since the derivatives of $b,\sigma$ are bounded, we deduce that
$$\mathbb{E}\big[\sup_{0\leq s \leq t}|\eta_s|^2\big]\leq C\mathbb{E}\int^t_0|\eta_s|^2ds+C\mathbb{E}\big[\sup_{0\leq s \leq t}|\beta^{\theta}_s|^2\big].$$
Finally, by Gronwall's inequality, we complete the proof.
\end{proof}

\begin{lemma}
Let $u$ be an optimal control and $X^u_t$ be the corresponding optimal trajectory. Then, for any control $v\in \mathcal{U}$, we get
 \begin{eqnarray}
  0&\leq &\mathbb{E}\big\{\Phi_x(X^u_T,P_{X^u_t})(K_T)+\mathbb{E}'[\Phi_{\mu}(X^u_T,P_{X^u_t},(X^u_T)')(K_T)']\big\}\nonumber\\
  &&+\mathbb{E}\int^T_0\bigg\{ h_x(t,X^u_t,P_{X^u_t},u_t,u_{t-\delta})(K_t)+\mathbb{E}'\big[h_{\mu}\big(t,X^u_t,P_{X^u_t},u_t,u_{t-\delta},\nonumber\\
  &&(X^u_T)'\big)(K_t)'\big]
  +h_v(t,X^u_t,P_{X^u_t},u_t,u_{t-\delta})(v_t-u_t)\nonumber\\
  &&+h_{v_\delta}(t,X^u_t,P_{X^u_t},u_t,u_{t-\delta})(v_{t-\delta}-u_{t-\delta})\bigg\} dt
\end{eqnarray} 
\end{lemma}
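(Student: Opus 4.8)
The plan is to start from the optimality inequality $0\le J(u^\theta)-J(u)$, divide by $\theta>0$, and let $\theta\to 0$. Writing the difference quotient as
\begin{align*}
\frac{J(u^\theta)-J(u)}{\theta}
&= \mathbb{E}\!\left[\frac{\Phi(X^\theta_T,P_{X^\theta_T})-\Phi(X^u_T,P_{X^u_T})}{\theta}\right]\\
&\quad +\mathbb{E}\int_0^T\frac{h(t,X^\theta_t,P_{X^\theta_t},u^\theta_t,u^\theta_{t-\delta})-h(t,X^u_t,P_{X^u_t},u_t,u_{t-\delta})}{\theta}\,dt,
\end{align*}
it suffices to identify the limit of each term and to conclude that the resulting sum is nonnegative, which is exactly the asserted inequality.

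For both $\Phi$ and $h$ I would insert intermediate configurations and express each increment as a telescoping sum over the variables $x$, $\mu$, $v$, $v_\delta$ (only $x$, $\mu$ for $\Phi$), exactly as was done for $b$ and $\sigma$ in the proof of the variational lemma. Each summand is an integral $\int_0^1(\cdot)\,d\lambda$ of a derivative of $\Phi$ or $h$, evaluated along the segment joining the optimal data to the perturbed data, multiplied either by $\frac{X^\theta_\cdot-X^u_\cdot}{\theta}=\eta_\cdot+K_\cdot$ in the $x$-direction, or (for the $\mu$-direction) by an $\mathbb{E}'$-average of $(\eta_\cdot+K_\cdot)'$ obtained from the chain rule $f(P_\mu)-f(P_{\mu_0})=\int_0^1\mathbb{E}[f_\mu(P_{\mu_0+\lambda\eta},\mu_0+\lambda\eta)\cdot\eta]\,d\lambda$ recorded above, or by $v_\cdot-u_\cdot$ in the $v$ and $v_\delta$ directions, since $\frac{u^\theta_\cdot-u_\cdot}{\theta}=v_\cdot-u_\cdot$ identically.

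It then remains to pass to the limit. By the preceding lemma, $\mathbb{E}[\sup_{0\le s\le T}|\eta_s|^2]\to 0$, so $\frac{X^\theta_\cdot-X^u_\cdot}{\theta}\to K_\cdot$ in $S^2_{\mathscr{F}}$; moreover $u^\theta_{t-\delta}\to u_{t-\delta}$, one has $W_2(P_{X^u_t+\lambda\theta(\eta_t+K_t)},P_{X^u_t})\le\lambda\theta\,(\mathbb{E}|\eta_t+K_t|^2)^{1/2}\to 0$, and all the intermediate arguments converge to $\Theta_t$ (resp.\ to $(X^u_T,P_{X^u_T})$). Using the continuity of the derivatives of $h$ and $\Phi$ (Hypothesis \ref{2H1}) together with dominated convergence — domination supplied by the linear-growth bound on those derivatives combined with the uniform $L^2$ estimates for $X^\theta$, $K$, and $\eta$ from Lemma \ref{2lm31} and the preceding lemma — each term converges to the corresponding term in the claimed inequality, while every contribution carrying the factor $\eta_\cdot$ vanishes. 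Collecting the limits yields that $\lim_{\theta\to0}\theta^{-1}(J(u^\theta)-J(u))$ equals the right-hand side, which is therefore $\ge 0$, finishing the proof.

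The main obstacle is the rigorous passage to the limit in the measure-derivative terms: one needs the joint (Wasserstein $\times$ Euclidean) continuity of $\partial_\mu\Phi$ and $\partial_\mu h$ on the lifted $L^2$-space, the convergence in $L^2$ of the randomized interpolants appearing inside $\mathbb{E}'$, and the uniform integrability furnished by the linear-growth bounds of Hypothesis \ref{2H1} together with the $S^2$-estimates already at hand; the rest — telescoping, handling the two delay terms, and interchanging $\mathbb{E}'$ with the limit via Fubini — is routine.
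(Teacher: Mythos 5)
Your proposal is correct and follows essentially the same route as the paper: both start from $0\le J(u^\theta)-J(u)$, telescope the increments of $\Phi$ and $h$ over the variables $x,\mu,v,v_\delta$ into $\int_0^1(\cdot)\,d\lambda$ terms multiplied by $\eta+K$ or $v-u$, discard the $\eta$-contributions (the paper's $\rho^\theta_t$) using Lemma \ref{2lm31} and the variational lemma, and pass to the limit via continuity/boundedness of the derivatives. Your treatment of the limit passage in the measure-derivative terms is in fact somewhat more explicit than the paper's.
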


\begin{proof}
Since $u$ is an optimal control, we deduce
\begin{eqnarray}\label{2eq32}
  0&\leq &J(u^{\theta}_t)-J(u_t)\nonumber\\
  &=&\mathbb{E}\big[\Phi(X^{\theta}_T,P_{X^{\theta}_T})-\Phi(X^u_T,P_{X^u_T})\big]\nonumber\\
  &&+\mathbb{E}\int^T_0\big[h(t,X^{\theta}_t,P_{X^{\theta}_t},u^{\theta}_t,u^{\theta}_{t-\delta})-h(t,X^u_t,P_{X^u_t},u^{\theta}_t,u^{\theta}_{t-\delta})\big]dt\nonumber\\
  &&+\mathbb{E}\int^T_0\big[h(t,X^u_t,P_{X^u_t},u^{\theta}_t,u^{\theta}_{t-\delta})-h(t,X^u_t,P_{X^u_t},u_t,{u_{t-\delta}})\big]dt\nonumber\\
 & =:&I_1+I_2+I_3.
\end{eqnarray}

\begin{eqnarray}
I_1&=&\mathbb{E}\left[\int^1_0\Phi_x(X^u_T+\lambda\theta(\eta_T+K_T),P_{X^{\theta}_T})\theta(\eta_{T}+K_T)d\lambda\right]\nonumber\\
&&+\mathbb{E}\bigg\{\int^1_0 \mathbb{E}'\Big[ \Phi_{\mu}\left(X^u_T,P_{X^u_T+\lambda\theta(\eta_T+K_T)},(X^u_T+\lambda\theta(\eta_T+K_T))'\right)\nonumber\\
&&\theta(\eta_{T}+K_T)'\Big]d\lambda\bigg\},\nonumber\\
 I_2&=&\mathbb{E}\left[\int^T_0\int^1_0 h_x(t,X^u_t+\lambda\theta(\eta_t+K_t),P_{X^{\theta}_t},u^{\theta}_t,u^{\theta}_{t-\delta})\theta(\eta_{t}+K_t)d\lambda dt\right]\nonumber\\
&&+\mathbb{E}\bigg\{ \int^T_0 \int^1_0 \mathbb{E}'\Big[ h_{\mu }\Big(t,X^u_t,P_{X^u_t+\lambda\theta(\eta_t+K_t)},u^{\theta}_t,u^{\theta}_{t-\delta},\nonumber\\
&&(X^u_t+\lambda\theta(\eta_t+K_t))'\Big)\theta(\eta_{t}+K_t)'\Big]d\lambda dt \bigg\},\nonumber\\
I_3&=&\mathbb{E}\bigg[\int^T_0\int^1_0 h_v\left(t,X^u_t,P_{X^u_t},u_t+\lambda\theta(v_t-u_t),u^{\theta}_{t-\delta}\right)\nonumber\\
&&\theta(v_{t}-u_t)d\lambda dt\bigg]\nonumber\\
&&+\mathbb{E}\bigg[\int^T_0\int^1_0 h_{v_\delta}\left(t,X^u_t,P_{X^u_t},u_t,u_{t-\delta}+\lambda\theta(v_{t-\delta}-u_{t-\delta})\right)\nonumber\\
&&\theta(v_{t-\delta}-u_{t-\delta})d\lambda dt\bigg].
\end{eqnarray}
From (\ref{2eq32}), we get
\begin{eqnarray}
 0&\leq &\mathbb{E}\left[\int^1_0\Phi_x(X^u_T+\lambda\theta(\eta_T+K_T),P_{X^{\theta}_T})(K_T)d\lambda\right]\nonumber\\
&&+\mathbb{E}\bigg\{\int^1_0 \mathbb{E}'\Big[ \Phi_{\mu}\left(X^u_T,P_{X^u_T+\lambda\theta(\eta_T+K_T)},(X^u_T+\lambda\theta(\eta_T+K_T))'\right)\nonumber\\
&&(K_T)'\Big]d\lambda\bigg\},\nonumber\\
&&+\mathbb{E}\left[\int^T_0\int^1_0 h_x(t,X^u_t+\lambda\theta(\eta_t+K_t),P_{X^{\theta}_t},u^{\theta}_t,u^{\theta}_{t-\delta})(K_t)d\lambda dt\right]\nonumber\\
&&+\mathbb{E}\bigg\{ \int^T_0 \int^1_0 \mathbb{E}'\Big[ h_{\mu }\Big(t,X^u_t,P_{X^u_t+\lambda\theta(\eta_t+K_t)},u^{\theta}_t,u^{\theta}_{t-\delta},\nonumber\\
&&(X^u_t+\lambda\theta(\eta_t+K_t))'\Big)(K_t)'\Big]d\lambda dt \bigg\}\nonumber\\
&&+\mathbb{E}\bigg[\int^T_0\int^1_0 h_v\left(t,X^u_t,P_{X^u_t},u_t+\lambda\theta(v_t-u_t),u^{\theta}_{t-\delta}\right)\nonumber\\
&&(v_{t}-u_t)d\lambda dt\bigg]\nonumber\\
&&+\mathbb{E}\bigg[\int^T_0\int^1_0 h_{v_\delta}\left(t,X^u_t,P_{X^u_t},u_t,u_{t-\delta}+\lambda\theta(v_{t-\delta}-u_{t-\delta})\right)\nonumber\\
&&(v_{t-\delta}-u_{t-\delta})d\lambda dt\bigg]\nonumber\\
&&+\rho^{\theta}_t.
\end{eqnarray}

where

\begin{eqnarray*}
\rho^{\theta}_t&= &\mathbb{E}\left[\int^1_0\Phi_x(X^u_T+\lambda\theta(\eta_T+K_T),P_{X^{\theta}_T})(\eta_T)d\lambda\right]\nonumber\\
&&+\mathbb{E}\bigg\{\int^1_0 \mathbb{E}'\Big[ \Phi_{\mu}\left(X^u_T,P_{X^u_T+\lambda\theta(\eta_T+K_T)},(X^u_T+\lambda\theta(\eta_T+K_T))'\right)\nonumber\\
&&(\eta_T)'\Big]d\lambda\bigg\},\nonumber\\
&&+\mathbb{E}\left[\int^T_0\int^1_0 h_x(t,X^u_t+\lambda\theta(\eta_t+K_t),P_{X^{\theta}_t},u^{\theta}_t,u^{\theta}_{t-\delta})(\eta_t)d\lambda dt\right]\nonumber\\
&&+\mathbb{E}\bigg\{ \int^T_0 \int^1_0 \mathbb{E}'\Big[ h_{\mu }\Big(t,X^u_t,P_{X^u_t+\lambda\theta(\eta_t+K_t)},u^{\theta}_t,u^{\theta}_{t-\delta},\nonumber\\
&&(X^u_t+\lambda\theta(\eta_t+K_t))'\Big)(\eta_t)'\Big]d\lambda dt \bigg\},\nonumber\\
\end{eqnarray*}
Since the derivatives of $\Phi,h$ are bounded and
 $$\lim_{\theta\rightarrow0}\mathbb{E}\big[\sup_{0\leq s \leq T}|\eta_s|^2\big]=0.$$
 we get
  $$\lim_{\theta\rightarrow0}\rho^{\theta}_t=0.$$

From the fact $u^{\theta}_t\rightarrow u_t$ and Lipschitz continuity of $\Phi, h$ we obtain the result.

\end{proof}

\section{Necessary and sufficient conditions for the optimal control}

\indent It is well known that there is a duality relationship between the stochastic delay differential equations and the backward anticipated stochastic differential equations. In this section, we introduce the adjoint process with the help of mean-filed backward anticipated stochastic differential equation, then the variational inequality can be deduced.\\
\indent Let us consider the following adjoint equation:
\begin{equation}\label{2eq41}
\left\{
\begin{aligned}
-dp_t=& \bigg\{ b^*_x(t,\Theta_t)p_t+\sigma^*_x(t,\Theta_t)q_t+h_x(t,X^u_t,P_{X^u_t},u_t,u_{t-\delta})&\\
&+\mathbb{E}'\Big[b^*_{\mu }(t,\Theta'_t,X^u_t)(p_t)'+\sigma^*_{\mu }(t,\Theta'_t,X^u_t)(q_t)'&\\
&+h_{\mu }(t,(X^u_t)',P_{X^u_t},(u_t)',(u_{t-\delta})',X^u_t)\Big]&\\
&+\mathbb{E}^{\mathcal{F}_t}\Big[b^*_{x_\delta}(t,\Theta_t)|_{t+\delta} p_{t+\delta}\Big]+\mathbb{E}^{\mathcal{F}_t}\Big[\sigma^*_{x_\delta}(t,\Theta_t)|_{t+\delta} q_{t+\delta}\Big]&\\
&+\mathbb{E}'\bigg[\mathbb{E}^{\mathcal{F}_t}\Big[b^*_{\mu_\delta}(t,\Theta'_t,X^u_t)|_{t+\delta} (p_{t+\delta})'\Big]&\\
&+\mathbb{E}^{\mathcal{F}_t}\Big[\sigma^*_{\mu_\delta}(t,\Theta'_t,X^u_t)|_{t+\delta} (q_{t+\delta})'\Big]\bigg]\bigg \} dt-q_tdB_t,&\\
p_T=&\Phi_x(X^u_t,P_{X^u_t})+\mathbb{E}'\big[\Phi_{\mu }((X^u_t)',(P_{X^u_t})',X^u_t)\big],&\\
p_t=&0, q_t=0, \ \ \ \ \ \ \ \ t\in(T,T+\delta],&
\end{aligned}
\right.
\end{equation}
where $b^*$ denotes the transpose of $b$; $b_{x_\delta}(t,\Theta_t)|_{t+\delta}$ denotes the value of $b_{x_\delta}(t,\Theta_t)$ when $t$ replaced by $t+\delta$, other involved terms are defined similarly.\\
\indent From Theorem \ref{unique}, (\ref{2eq41}) admits a unique adapted solution. Then, we get the main result of this paper which is stochastic maximum principle for generalized mean-field delay control problem.

\begin{theorem}
(Necessary Conditions for the Optimal Control). Let $u$ be an optimal control, and $X^u_t$ denote the associated optimal trajectory. Let $(p,q)$ be the unique solution of equation (\ref{2eq41}). Then the following integral stochastic maximum principle holds: for all $v\in \mathcal{U},$
\begin{eqnarray}\label{2eq43}
\left<H_v(t,\Theta_t,p_t,q_t),v-u_t\right>+\left<\mathbb{E}^{\mathcal{F}_t}\left[H_{v_\delta}(t,\Theta_t,p_t,q_t)|_{t+\delta}\right],v-u_t\right>\geq 0,
\end{eqnarray}
where,
\begin{eqnarray*}
H(t,\Theta,p,q)&=&b^*(t,x,x_\delta,\mu,\mu_\delta,v,v_\delta)p+\sigma^* (t,x,x_\delta,\mu,\mu_\delta,v,v_\delta)q\\
&&+h(t,x,\mu,v,v_\delta)
\end{eqnarray*} 
\end{theorem}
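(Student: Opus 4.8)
The plan is to obtain the integral form of the maximum principle by pairing the variational process $K$ of \eqref{2eq31} with the adjoint process $(p,q)$ of \eqref{2eq41} through an It\^o duality identity, and then to localize. By Theorem~\ref{TMFDSDE} and Theorem~\ref{unique} we have $K\in S^2_{\mathscr F}$ and $(p,q)\in S^2_{\mathscr F}\times H^2_{\mathscr F}$, so applying It\^o's product formula to $\langle p_t,K_t\rangle$ on $[0,T]$ and taking expectation removes the stochastic integrals (after a routine localization justified by the a priori $L^2$ estimates). Using $K_0=0$ and the terminal value $p_T=\Phi_x(X^u_T,P_{X^u_T})+\mathbb{E}'[\Phi_\mu((X^u_T)',(P_{X^u_T})',X^u_T)]$, the left-hand side equals exactly the terminal term $\mathbb{E}\big[\langle\Phi_x(X^u_T,P_{X^u_T})+\mathbb{E}'[\Phi_\mu],K_T\rangle\big]$ that appears in the variational inequality of the preceding lemma.

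Next I would insert the drifts of \eqref{2eq31} and \eqref{2eq41} into the right-hand side and verify that all ``state'' contributions cancel in pairs. The terms $\langle b_x^*p_t,K_t\rangle$, $\langle\sigma_x^*q_t,K_t\rangle$ coming from $dp_t$ cancel $\langle p_t,b_xK_t\rangle$ (from $\langle p_t,dK_t\rangle$) and $\langle q_t,\sigma_xK_t\rangle$ (from the covariation $d\langle p,K\rangle_t$); the measure terms $\mathbb{E}'[b_\mu^*(t,\Theta'_t,X^u_t)(p_t)']$, $\mathbb{E}'[\sigma_\mu^*(t,\Theta'_t,X^u_t)(q_t)']$ cancel $\mathbb{E}'[b_\mu(t,\Theta_t,(X^u_t)')(K_t)']$, $\mathbb{E}'[\sigma_\mu(t,\Theta_t,(X^u_t)')(K_t)']$ after Fubini and the interchange of the i.i.d.\ copies $(\Omega,\mathcal F,P)$ and $(\Omega',\mathcal F',P')$; and the anticipated terms $\mathbb{E}^{\mathcal F_t}[b_{x_\delta}^*(t,\Theta_t)|_{t+\delta}p_{t+\delta}]$, $\mathbb{E}^{\mathcal F_t}[\sigma_{x_\delta}^*(t,\Theta_t)|_{t+\delta}q_{t+\delta}]$ together with their $\mu_\delta$-analogues cancel the delayed terms $b_{x_\delta}(t,\Theta_t)K_{t-\delta}$, $\sigma_{x_\delta}(t,\Theta_t)K_{t-\delta}$, $\mathbb{E}'[b_{\mu_\delta}(\cdots)(K_{t-\delta})']$, $\mathbb{E}'[\sigma_{\mu_\delta}(\cdots)(K_{t-\delta})']$ via the change of variables $t\mapsto t-\delta$, using $K_t=0$ on $[-\delta,0]$, $p_t=q_t=0$ on $(T,T+\delta]$, and the structural assumptions (C1)--(C2) so that every integral stays on $[0,T]$ and no boundary term is created. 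What survives is
\[
\mathbb{E}\big[\langle\Phi_x+\mathbb{E}'[\Phi_\mu],K_T\rangle\big]
= \mathbb{E}\!\int_0^T\!\Big(-\langle h_x,K_t\rangle-\mathbb{E}'[\langle h_\mu,(K_t)'\rangle]+\langle b_v^*p_t+\sigma_v^*q_t,\,v_t-u_t\rangle+\langle b_{v_\delta}^*p_t+\sigma_{v_\delta}^*q_t,\,v_{t-\delta}-u_{t-\delta}\rangle\Big)dt .
\]

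Substituting this into the variational inequality
\[
0\le\mathbb{E}\big[\langle\Phi_x+\mathbb{E}'[\Phi_\mu],K_T\rangle\big]+\mathbb{E}\!\int_0^T\!\big(\langle h_x,K_t\rangle+\mathbb{E}'[\langle h_\mu,(K_t)'\rangle]+\langle h_v,v_t-u_t\rangle+\langle h_{v_\delta},v_{t-\delta}-u_{t-\delta}\rangle\big)dt
\]
makes every $K$-dependent term disappear, and recalling $H_v=b_v^*p+\sigma_v^*q+h_v$, $H_{v_\delta}=b_{v_\delta}^*p+\sigma_{v_\delta}^*q+h_{v_\delta}$ we are left with
\[
0\le\mathbb{E}\!\int_0^T\!\big(\langle H_v(t,\Theta_t,p_t,q_t),v_t-u_t\rangle+\langle H_{v_\delta}(t,\Theta_t,p_t,q_t),v_{t-\delta}-u_{t-\delta}\rangle\big)dt,\qquad\forall\,v\in\mathcal U .
\]
Shifting $t\mapsto t-\delta$ once more in the second integrand, using $v=u$ on $[-\delta,0]$ and $p=q=0$ past $T$, and inserting $\mathbb{E}^{\mathcal F_t}[\cdot]$ (legitimate because $v_t-u_t$ is $\mathcal F_t$-measurable), this becomes $0\le\mathbb{E}\int_0^T\langle H_v(t,\Theta_t,p_t,q_t)+\mathbb{E}^{\mathcal F_t}[H_{v_\delta}(t,\Theta_t,p_t,q_t)|_{t+\delta}],\,v_t-u_t\rangle dt$. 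Finally I would localize: testing against the admissible controls $u_t+\mathbf{1}_A(t,\omega)(v-u_t)$ with $A$ predictable and $v\in U$, and applying the Lebesgue differentiation theorem, yields the pointwise inequality \eqref{2eq43} for a.e.\ $t$, $P$-a.s.

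I expect the main obstacle to be the bookkeeping in the cancellation step: one must handle four families of coefficients ($\partial_x$, $\partial_\mu$, $\partial_{x_\delta}$, $\partial_{\mu_\delta}$) simultaneously, each occurring in both $b$ and $\sigma$, and check that the copy-interchange identity for the $\mu$-terms and the two time-shift identities for the delayed/anticipated terms are applied consistently, and that the data $K|_{[-\delta,0]}=0$, $p|_{(T,T+\delta]}=q|_{(T,T+\delta]}=0$ together with (C1)--(C2) are exactly what is needed to close each pairing without leftover boundary integrals. The integrability required to drop the martingale parts and the final localization argument are standard.
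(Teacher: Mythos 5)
Your proposal follows essentially the same route as the paper: Itô duality applied to $\left<p_t,K_t\right>$ with the state, measure, and delayed/anticipated terms cancelling via the copy-interchange and the time-shift $t\mapsto t\pm\delta$ (using $K\equiv 0$ on $[-\delta,0]$ and $p=q=0$ on $(T,T+\delta]$), substitution into the variational inequality to reach $0\le\mathbb{E}\int_0^T\left(\left<H_v,v_t-u_t\right>+\left<H_{v_\delta},v_{t-\delta}-u_{t-\delta}\right>\right)dt$, and then localization in time and in $\omega$ via $\mathcal F_t$-measurable indicator perturbations together with the Lebesgue differentiation theorem. The only (immaterial) difference is the order of the final steps: the paper first restricts the perturbation to $[t,t+\varepsilon)$ and then shifts the delayed integral, while you shift and insert $\mathbb{E}^{\mathcal F_t}$ before localizing.
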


 \begin{proof}
 By applying It\^o's formula to $\left<p_t,K_t\right>$, notice that
 \begin{eqnarray*}
&&\mathbb{E}\int^T_0\bigg\{K^*_{t-\delta}b^*_{x_\delta}(t,\Theta_t)p_t-K^*_t\mathbb{E}^{\mathcal{F}_t}\left[b^*_{x_\delta}(t,\Theta_t)|_{t+\delta} p_{t+\delta}\right]\bigg\}dt \\
&=&\mathbb{E}\bigg\{\int^T_0K^*_{t-\delta}b^*_{x_\delta}(t,\Theta_t)p_tdt-\int_\delta^{T+\delta} K^*_{t-\delta}b^*_{x_\delta}(t,\Theta_t) p_{t}dt\bigg\} \\
&=&\mathbb{E}\bigg\{\int^\delta_0K^*_{t-\delta}b^*_{x_\delta}(t,\Theta_t)p_tdt-\int_T^{T+\delta} K^*_{t-\delta}b^*_{x_\delta}(t,\Theta_t) p_{t}dt\bigg\} \\
&=&0,
\end{eqnarray*}
  and similarly results can be obtained for other terms, then we get
\begin{eqnarray}\label{2eq42}
&&\mathbb{E}\int^T_0\Big\{\left<H_v(t,\Theta_t,p_t,q_t),v_t-u_t\right>+\left<H_{v_\delta}(t,\Theta_t,p_t,q_t),v_{t-\delta}-u_{t-\delta}\right>\Big\}dt\nonumber\\
&=&\mathbb{E}\left<\Phi_x(X^u_t,P_{X^u_t})+\mathbb{E}'\big[\Phi_{\mu }((X^u_t)',(P_{X^u_t})',X^u_t)\big],K_T\right>\nonumber \\
&&+\mathbb{E}\int^T_0 \left<h_x(t,X^u_t,P_{X^u_t},u_t,u_{t-\delta}),K_t\right>dt\nonumber \\
&&+\mathbb{E}\int^T_0 \left<\mathbb{E}'\big[ h_{\mu }(t,(X^u_t)',P_{X^u_t}, (u_t)',(u_{t-\delta})',X^u_t)\big],K_t\right>dt\nonumber  \\
&&+\mathbb{E}\int^T_0 \left< h_{v }(t,(X^u_t)',P_{X^u_t}, X^u_t, (u_t)',(u_{t-\delta})'),v_t-u_t\right>dt\nonumber  \\
&&+\mathbb{E}\int^T_0 \left< h_{v_\delta }(t,(X^u_t)',P_{X^u_t}, X^u_t, (u_t)',(u_{t-\delta})'),v_{t-\delta}-u_{t-\delta}\right>dt\nonumber  \\
&\geq&0.
\end{eqnarray}
\indent Set
\begin{eqnarray*}
v_s=\left\{
\begin{array}{l}
v_s, \ \ \ s\in[t,t+\varepsilon),\\
u_s, \ \ \ otherwise,
\end{array}
\right.
\end{eqnarray*}
where $t\in[0,T]$, $v\in \mathcal{U}$. Then (\ref{2eq42}) leads to
\begin{eqnarray}
&&\frac{1}{\varepsilon}\mathbb{E}\int^{t+\varepsilon}_t\left<H_v(s,\Theta_s,p_s,q_s),v_s-u_s\right>ds\nonumber\\
&&+\frac{1}{\varepsilon}\mathbb{E}\int^{t+\varepsilon+\delta}_{t+\delta}\left<H_{v_\delta}(s,\Theta_s,p_s,q_s),v_{s-\delta}-u_{s-\delta}\right>ds\geq0.
\end{eqnarray}
That means
\begin{eqnarray}
&&\frac{1}{\varepsilon}\mathbb{E}\int^{t+\varepsilon}_t\left<H_v(s,\Theta_s,p_s,q_s)+\mathbb{E}^{\mathcal{F}_s}\left[H_{v_\delta}(s,\Theta_s,p_s,q_s)|_{s+\delta}\right],v_s-u_s\right>ds\nonumber\\
&\geq& 0
\end{eqnarray}
Letting $\varepsilon\rightarrow 0+$, by Lebesgue differential theorem, we have
\begin{eqnarray}
&&\mathbb{E}\left<H_v(t,\Theta_t,p_t,q_t)+\mathbb{E}^{\mathcal{F}_t}\left[H_{v_\delta}(t,\Theta_t,p_t,q_t)|_{t+\delta}\right],v_t-u_t\right>\geq 0 \ \ a.e.\nonumber.
\end{eqnarray}
\indent Now, let $v\in \mathcal{U}$ be a selected element and $A$ an arbitrary element of $\sigma$-algebra $\mathcal{F}_t$, set $w_t=v\textbf{1}_A+u_t\textbf{1}_{A^c}$. Clearly, $w_t$ is an admissible control and for all $A\in \mathcal{F}_t,$ we obtain
\begin{eqnarray}
&&\mathbb{E}\left<H_v(t,\Theta_t,p_t,q_t)+\mathbb{E}^{\mathcal{F}_t}\left[H_{v_\delta}(t,\Theta_t,p_t,q_t)|_{t+\delta}\right],w_t-u_t\right>\nonumber\\
&=&\mathbb{E}\left[\left<H_v(t,\Theta_t,p_t,q_t)+\mathbb{E}^{\mathcal{F}_t}\left[H_{v_\delta}(t,\Theta_t,p_t,q_t)|_{t+\delta}\right],v-u_t\right>\textbf{1}_A\right]\nonumber\\
&\geq& 0 \ \ a.e.\nonumber,
\end{eqnarray}
which implies
\begin{eqnarray}
&&\mathbb{E}\left[\left<H_v(t,\Theta_t,p_t,q_t)+\mathbb{E}^{\mathcal{F}_t}\left[H_{v_\delta}(t,\Theta_t,p_t,q_t)|_{t+\delta}\right],v-u_t\right>|\mathcal{F}_t\right]\nonumber\\
&=&\left<H_v(t,\Theta_t,p_t,q_t)+\mathbb{E}^{\mathcal{F}_t}\left[H_{v_\delta}(t,\Theta_t,p_t,q_t)|_{t+\delta}\right],v-u_t\right>\geq 0 \ \ a.e.\nonumber.
\end{eqnarray}
 \end{proof}

\indent Then we study the sufficient conditions.
\begin{theorem}
(Sufficient Conditions for the Optimality of Control). Let Hypothesis \ref{2H1} hold and let $u$ is the control satisfies (\ref{2eq43}) and $(p,q)$ be the unique solution of (\ref{2eq41}). We further assume $\Phi(x,\mu ), H(t,x,x_\delta,\mu,\mu_\delta, p_t,q_t,v,v_\delta)$ are convex respect to $(x,\mu)$ and $(x,x_\delta,\mu,\mu_\delta,v,v_\delta)$. Then $u$ is the optimal control of our control problem.
\end{theorem}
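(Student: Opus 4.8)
The plan is to prove directly that $J(v)\ge J(u)$ for every $v\in\mathcal{U}$, by combining the two convexity hypotheses with a duality identity relating the state perturbation to the adjoint pair $(p,q)$. Write $\hat{X}_t:=X^v_t-X^u_t$ and $\Theta^v_t:=(X^v_t,X^v_{t-\delta},P_{X^v_t},P_{X^v_{t-\delta}},v_t,v_{t-\delta})$, so that $\Theta^u_t=\Theta_t$ in the earlier notation. By~(\ref{2eq11}), $\hat{X}$ solves a GMFDSDE with zero history on $[-\delta,0]$, whence $\hat{X}\in S^2_{\mathscr{F}}(0,T;\mathbb{R}^m)$; combined with the $S^2\times H^2$-regularity of $(p,q)$ from Theorem~\ref{unique} and the Lipschitz/boundedness bounds of Hypothesis~\ref{2H1}, all stochastic integrals below are true martingales. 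Split $J(v)-J(u)$ into the terminal part $\mathbb{E}[\Phi(X^v_T,P_{X^v_T})-\Phi(X^u_T,P_{X^u_T})]$ and the running part $\mathbb{E}\int_0^T[h(t,X^v_t,P_{X^v_t},v_t,v_{t-\delta})-h(t,X^u_t,P_{X^u_t},u_t,u_{t-\delta})]\,dt$. The convexity of $\Phi$ in $(x,\mu)$ (in the Fr\'echet/measure-derivative sense) together with the copy-space relabelling $\omega\leftrightarrow\omega'$ bounds the terminal part below by $\mathbb{E}\langle\Phi_x(X^u_T,P_{X^u_T})+\mathbb{E}'[\Phi_\mu((X^u_T)',(P_{X^u_T})',X^u_T)],\hat{X}_T\rangle$, which equals $\mathbb{E}\langle p_T,\hat{X}_T\rangle$ by the terminal condition in~(\ref{2eq41}).

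The core step is to apply It\^o's formula (integration by parts) to $\langle p_t,\hat{X}_t\rangle$ on $[0,T]$ and take expectations; since $\hat{X}_0=0$ the left side is $\mathbb{E}\langle p_T,\hat{X}_T\rangle$. I handle the $\partial_\mu$- and $\partial_{\mu_\delta}$-terms of the drift of $-dp_t$ with the same relabelling, e.g. $\mathbb{E}\langle\mathbb{E}'[b^*_\mu(t,\Theta'_t,X^u_t)(p_t)'],\hat{X}_t\rangle=\mathbb{E}\langle p_t,\mathbb{E}'[b_\mu(t,\Theta_t,(X^u_t)')(\hat{X}_t)']\rangle$, and similarly for $q$, $\sigma_\mu$, $h_\mu$, etc. The anticipated terms are treated by shifting $t\mapsto t+\delta$ and conditioning on $\mathcal{F}_t$, using $\hat{X}_s=0$ for $s\le0$ and $p_s=q_s=0$ for $s>T$ — exactly the change-of-variable identity used just before~(\ref{2eq42}). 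Writing $H=b^*p+\sigma^*q+h$ and abbreviating
$$S_t:=\langle H_x,\hat{X}_t\rangle+\langle H_{x_\delta},\hat{X}_{t-\delta}\rangle+\mathbb{E}'\big[\langle H_\mu,(\hat{X}_t)'\rangle\big]+\mathbb{E}'\big[\langle H_{\mu_\delta},(\hat{X}_{t-\delta})'\rangle\big],$$
with all derivatives of $H$ evaluated at $(t,\Theta^u_t,p_t,q_t)$, the computation yields the duality identity
$$\mathbb{E}\langle p_T,\hat{X}_T\rangle=\mathbb{E}\!\int_0^T\!\Big(\langle p_t,b(t,\Theta^v_t)-b(t,\Theta^u_t)\rangle+\langle q_t,\sigma(t,\Theta^v_t)-\sigma(t,\Theta^u_t)\rangle-S_t\Big)\,dt.$$

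Next, using $h(t,X^v_t,P_{X^v_t},v_t,v_{t-\delta})-h(t,X^u_t,P_{X^u_t},u_t,u_{t-\delta})=[H(t,\Theta^v_t,p_t,q_t)-H(t,\Theta^u_t,p_t,q_t)]-\langle p_t,b(t,\Theta^v_t)-b(t,\Theta^u_t)\rangle-\langle q_t,\sigma(t,\Theta^v_t)-\sigma(t,\Theta^u_t)\rangle$ together with the convexity of $H$ in $(x,x_\delta,\mu,\mu_\delta,v,v_\delta)$ with $(p_t,q_t)$ frozen, namely $H(t,\Theta^v_t,p_t,q_t)-H(t,\Theta^u_t,p_t,q_t)\ge S_t+\langle H_v,v_t-u_t\rangle+\langle H_{v_\delta},v_{t-\delta}-u_{t-\delta}\rangle$, I integrate, add the terminal bound, and substitute the duality identity to cancel both $\mathbb{E}\int_0^T S_t\,dt$ and $\mathbb{E}\langle p_T,\hat{X}_T\rangle$. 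This gives
$$J(v)-J(u)\ge\mathbb{E}\!\int_0^T\!\Big(\langle H_v(t,\Theta^u_t,p_t,q_t),v_t-u_t\rangle+\langle H_{v_\delta}(t,\Theta^u_t,p_t,q_t),v_{t-\delta}-u_{t-\delta}\rangle\Big)\,dt.$$
Shifting $t\mapsto t+\delta$ in the $H_{v_\delta}$-integral and conditioning on $\mathcal{F}_t$ (using $v_s=u_s$ and $\hat{X}_s=0$ on $[-\delta,0]$ and $p_s=q_s=0$ on $(T,T+\delta]$, the manipulation used to pass from~(\ref{2eq42}) to~(\ref{2eq43})), the right-hand side becomes $\mathbb{E}\int_0^T\langle H_v(t,\Theta_t,p_t,q_t)+\mathbb{E}^{\mathcal{F}_t}[H_{v_\delta}(t,\Theta_t,p_t,q_t)|_{t+\delta}],v_t-u_t\rangle\,dt$, which is $\ge0$ by~(\ref{2eq43}) applied pointwise with the $\mathcal{F}_t$-measurable value $v_t\in U$. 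Hence $J(u)=\min_{v\in\mathcal{U}}J(v)$, so $u$ is optimal.

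The main obstacle is the It\^o/duality step: one must check carefully that the copy-space relabelling converts every mean-field term of the adjoint drift into the matching $(\hat{X}_t)'$- or $(\hat{X}_{t-\delta})'$-contribution, and that the anticipated terms carrying the ``$|_{t+\delta}$'' conditional expectations align with the delayed terms $\hat{X}_{t-\delta}$ produced by the convexity inequality for $H$. The boundary bookkeeping — vanishing history of $\hat{X}$, vanishing tail of $(p,q)$ — is precisely what makes these cancellations exact; this is the only point where the delay structure genuinely interacts with the mean-field structure, everything else being routine rearrangement.
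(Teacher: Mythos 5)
Your proposal is correct and follows essentially the same route as the paper's own proof: convexity of $\Phi$ to bound the terminal term, It\^o's formula applied to $\langle p_t, X^v_t-X^u_t\rangle$ for the duality identity, convexity of $H$ in $(x,x_\delta,\mu,\mu_\delta,v,v_\delta)$, and finally the maximum condition (\ref{2eq43}). Your treatment is in fact somewhat more explicit than the paper's at the last step, where you carry out the shift $t\mapsto t+\delta$ and the conditioning on $\mathcal{F}_t$ that the paper leaves implicit when invoking (\ref{2eq43}).
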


\begin{proof}
For any $v\in \mathcal{U}$, we have
\begin{eqnarray}
&&J(v)-J(u)\nonumber \\
&=&\mathbb{E}\big[\Phi(X^v_T,P_{X^v_T})-\Phi(X^u_T,P_{X^u_T})\big]\nonumber\\
&&+\mathbb{E}\int^T_0 \big[h(t,X^v_t,P_{X^v_t},
v_t,v_{t-\delta} )-h(t,{X}^{u}_t,P_{X^u_t},
u_t ,u_{t-\delta})\big]dt.
\end{eqnarray}
\indent Since $\Phi$ is convex with respect to $x$. we get
\begin{eqnarray}
&&\Phi(X^{v}_T,P_{X^v_T})-\Phi(X^{u}_T,P_{X^u_T})\nonumber \\
&\geq &\Phi_x(X^{u}_T,P_{X^u_T})(X^{v}_T-X^u_T)\nonumber\\
&&+\mathbb{E}'\big[\Phi_\mu(X^{u}_T,P_{X^u_T},(X^{u}_T)')(X^{v}_T-X^u_T)'\big]. 
\end{eqnarray}
Consequently
\begin{eqnarray}
&&J(v)-J(u)\nonumber\\
&=&\mathbb{E}\big\{\Phi_x(X^{u}_T,P_{X^u_T})(X^{v}_T-X^u_T)\nonumber\\
&&+\mathbb{E}'\big[\Phi_x(X^{u}_T,P_{X^u_T},(X^{u}_T)')(X^{v}_T-X^u_T)'\big]\big\}\nonumber \\
&&+\mathbb{E}\int^T_0 \big[h(t,X^v_t,P_{X^v_t},
v_t,v_{t-\delta} )-h(t,{X}^{u}_t,P_{X^u_t},
u_t ,u_{t-\delta})\big]dt.
\end{eqnarray}
\indent By applying It\^{o}'s formula to $\left<p_t,X^{v}_t-{X}^u_t\right>$ and taking the expectation, we obtain

\begin{eqnarray}
&&J(v)-J(u)\\
&\geq &\mathbb{E}\int^T_0\big [H(t,X^{v}_t,X^{v}_{t-\delta},P_{X^v_t},P_{X^v_{t-\delta}},
v_t,v_{t-\delta},p_t,q_t )-H(t,\Theta_t,p_t,q_t )\big]dt\nonumber \\
&&-\mathbb{E}\int^T_0 \bigg\{\left<H_x(t,\Theta_t,p_t,q_t),X^{v}_t-{X}^u_t\right>\nonumber \\
&&+\mathbb{E}'\left[\left< H_\mu(t,\Theta_t,({X}^{u}_t)',p_t,q_t,
),(X^{v}_t-{X}^u_t)'\right>\right]\bigg\}dt\nonumber\\
&&-\mathbb{E}\int^T_0 \bigg\{\left<\mathbb{E}^{\mathcal{F}_t}\left[H_{x_\delta}(t,\Theta_t,p_t,q_t)|_{t+\delta}\right],X^{v}_t-{X}^u_t\right>\nonumber \\
&&+\mathbb{E}'\left[\left<\mathbb{E}^{\mathcal{F}_t}\left[ H_{\mu_\delta}(t,\Theta_t,({X}^{u}_t)',p_t,q_t,
)|_{t+\delta}\right],(X^{v}_t-{X}^u_t)'\right>\right]\bigg\}dt
\end{eqnarray}
\indent Since $H$ is convex with respect to $(x,x_\delta,\mu,\mu_\delta,v,v_\delta)$  The use of Clark generalized gradient of $H$, evaluated at $({X}^u_t,{X}^u_{t-\delta},P_{{X}^u_t},P_{{X}^u_{t-\delta}},u_t,u_{t-\delta})$, yields

\begin{eqnarray}
&&H(t,X^{v}_t,X^{v}_{t-\delta},P_{X^v_t},P_{X^v_{t-\delta}},
v_t,v_{t-\delta},p_t,q_t )-H(t,\Theta_t,p_t,q_t )\nonumber \\
&\geq  &\left<H_x(t,\Theta_t,p_t,q_t),X^{v}_t-{X}^u_t\right>\nonumber \\
&&+\mathbb{E}'\left[\left< H_\mu(t,\Theta_t,({X}^{u}_t)',p_t,q_t,
),(X^{v}_t-{X}^u_t)'\right>\right]\nonumber\\
&&+\left<H_{x_\delta}(t,\Theta_t,p_t,q_t),X^{v}_{t-\delta}-{X}^u_{t-\delta}\right>\nonumber \\
&&+\mathbb{E}'\left[\left< H_{\mu_\delta}(t,\Theta_t,({X}^{u}_t)',p_t,q_t
),(X^{v}_{t-\delta}-{X}^u_{t-\delta})'\right>\right]\nonumber\\
&&+ \left<H_v(t,\Theta_t,p_t,q_t),v_t-u_t\right>+\left<H_{v_\delta}(t,\Theta_t,p_t,q_t),v_{t-\delta}-u_{t-\delta}\right>.\nonumber
\end{eqnarray}

\indent Thus, by maximum condition (\ref{2eq43}), we have
\begin{eqnarray}
&&J(v)-J(u)\nonumber \\
&\geq &\mathbb{E}\int^T_0  \left<H_v(t,\Theta_t,p_t,q_t),v_t-u_t\right>dt\nonumber\\
&&+\mathbb{E}\int^T_0  \left<H_{v_\delta}(t,\Theta_t,p_t,q_t),v_{t-\delta}-u_{t-\delta}\right>dt\nonumber\\
&\geq& 0.
\end{eqnarray}

\indent The above equality complete the proof.

\end{proof}

 \end{document}